\title{Symmetric functions in noncommuting variables in superspace}
\author[D. Arcis, C. Gonz\'alez and S. M\'arquez]{Diego Arcis\\Camilo Gonz\'alez\\Sebasti\'an M\'arquez}
\address{Departamento de Matem\'aticas, Universidad de La Serena, Cisternas 1200 -- 1700000 La Serena, Chile.\textcolor{white}{$\underbrace{1}$}\newline
Departamento de Matemáticas, Universidad de Concepción, Casilla 160--C -- 4030000 Concepción, Chile.
\textcolor{white}{$\underbrace{.}$}\newline
Departamento de Matem\'aticas, Universidad Aut\'onoma de Chile, Sede Talca, 4 Norte 95 -- 3460000 Talca, Chile.}
\email{diego.arcis@userena.cl}
\email{camgonzalezp@udec.cl}
\email{sebastian.marquez@uautonoma.cl}
\newcommand{\working}{1}
\newcommand{\fix}[2]{\ifnum\working=1{{\red #1} {\blue(#2)}}\else{#1}\fi}
\newtheorem{thm}{Theorem}[section]
\newtheorem{crl}[thm]{Corollary}
\newtheorem{lem}[thm]{Lemma}
\newtheorem{pro}[thm]{Proposition}
\theoremstyle{definition}
\newtheorem{dfn}[thm]{Definition}
\newtheorem{exm}[thm]{Example}
\newtheorem{rem}[thm]{Remark}
\def\blue{\color{blue}}\def\red{\color{red}}
\newcommand{\sNSym}{\mathrm{\bf sNSym}}
\newcommand{\NSym}{\mathrm{\bf NSym}}
\newcommand{\NCSym}{\mathrm{\bf NCSym}}
\newcommand{\sSym}{\mathrm{\bf sSym}}
\newcommand{\sQSym}{\mathrm{\bf sQSym}}
\newcommand{\QSym}{\mathrm{\bf QSym}}
\newcommand{\SSym}{\mathrm{\bf Sym}}
\newcommand{\sNCSym}{\mathrm{\bf sNCSym}}
\newcommand{\sPar}{\mathrm{sPar}}
\renewcommand{\P}{\mathcal{P}}
\newcommand{\PBr}{\mathcal{P}\kern-1pt\mathcal{B}r}
\newcommand{\sPBr}{s\mathcal{P}\kern-1pt\mathcal{B}r}
\newcommand{\sC}{s\mathcal{C}}
\newcommand{\sP}{s\mathcal{P}}
\newcommand{\field}{\Q}
\newcommand{\sseries}[1][\!\!]{\field_{#1}^{\theta}\langle\kern-0.25em\langle x\rangle\kern-0.25em\rangle}
\newcommand{\pseries}[2][\!\!]{\field_{#1}^{\theta}\langle\kern-0.25em\langle x_1,\ldots,x_{#2}\rangle\kern-0.25em\rangle}
\newcommand{\cseries}[1][\!\!]{\field_{#1}^{\theta}[\kern-0.18em[x]\kern-0.18em]}
\newcommand{\N}{\mathbb{N}}
\newcommand{\df}{\operatorname{df}}
\newcommand{\inv}{\operatorname{inv}}
\newcommand{\std}{\operatorname{std}}
\newcommand{\perm}[1]{\sigma\!_{_{(#1)}}}
\newcommand{\sort}{\operatorname{sort}}
\newcommand{\te}{\tilde{e}}
\newcommand{\tp}{\tilde{p}}
\renewcommand{\th}{\tilde{h}}
\newcommand{\Sym}{\mathfrak{S}}
\newcommand{\ipro}[1]{\left\langle #1\right\rangle}
\newcommand{\fcirc}{\triangleright}
\numberwithin{equation}{section}
\newcommand{\funcs}[2]{#1\langle\kern-0.2em\langle #2\rangle\kern-0.2em\rangle}
\newcommand{\aspdf}{1}
\newcommand{\vcdraw}[1]{\vcenter{\hbox{#1}}}
\newcommand{\figele}{
	\ifnum\aspdf=1\[\left(\,\,\emptyset,\quad
		\vcdraw{\includegraphics{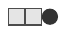}},\quad
		\vcdraw{\includegraphics{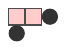}},\quad
		\vcdraw{\includegraphics{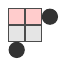}},\quad
		\vcdraw{\includegraphics{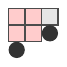}}\,\,\right)
	\]\else
		\tabpart{
			\snode\&\snode\&\dnode
		}
		\tabpart{
			\snode[red!20]\&\snode[red!20]\&\dnode\\
			\dnode
		}
		\tabpart{
			\snode[red!20]\&\snode[red!20]\&\dnode\\
			\snode\&\snode\\
			\dnode
		}
		\tabpart{
			\snode[red!20]\&\snode[red!20]\&\snode\\
			\snode[red!20]\&\snode[red!20]\&\dnode\\
			\dnode
		}
	\fi
}
\newcommand{\figten}{
	\ifnum\aspdf=1
		\vcdraw{\includegraphics{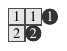}}
	\else
		\tabpart{
			\lnode{1}\&\lnode{1}\&\rnode{1}\\
			\lnode{2}\&\rnode{2}
		}
	\fi
}
\newcommand{\fignin}{
	\ifnum\aspdf=1
		\vcdraw{\includegraphics{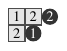}}
	\else
		\tabpart{
			\lnode{1}\&\lnode{2}\&\rnode{2}\\
			\lnode{2}\&\rnode{1}
		}
	\fi
}
\newcommand{\figeig}{
	\centering
	\ifnum\aspdf=1
		\includegraphics{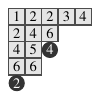}
	\else
		\tabpart{
			\lnode{1}\&\lnode{2}\&\lnode{2}\&\lnode{3}\&\lnode{4}\\
			\lnode{2}\&\lnode{4}\&\lnode{6}\\
			\lnode{4}\&\lnode{5}\&\rnode{4}\\
			\lnode{6}\&\lnode{6}\\
			\rnode{2}
		}
	\fi
}
\newcommand{\figsev}{
	\ifnum\aspdf=1\[\left(\,\,\emptyset,\quad
		\vcdraw{\includegraphics{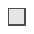}},\quad
		\vcdraw{\includegraphics{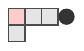}},\quad
		\vcdraw{\includegraphics{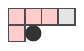}},\quad
		\vcdraw{\includegraphics{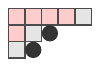}},\quad
		\vcdraw{\includegraphics{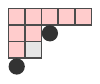}},\quad
		\vcdraw{\includegraphics{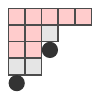}}\,\,\right)
	\]\else
		\tabpart{
			\snode
		}
		\tabpart{
			\snode[red!20]\&\snode\&\snode\&\dnode\\
			\snode
		}
		\tabpart{
			\snode[red!20]\&\snode[red!20]\&\snode[red!20]\&\snode\\
			\snode[red!20]\&\dnode
		}
		\tabpart{
			\snode[red!20]\&\snode[red!20]\&\snode[red!20]\&\snode[red!20]\&\snode\\
			\snode[red!20]\&\snode\&\dnode\\
			\snode\&\dnode
		}
		\tabpart{
			\snode[red!20]\&\snode[red!20]\&\snode[red!20]\&\snode[red!20]\&\snode[red!20]\\
			\snode[red!20]\&\snode[red!20]\&\dnode\\
			\snode[red!20]\&\snode\\
			\dnode
		}
		\tabpart{
			\snode[red!20]\&\snode[red!20]\&\snode[red!20]\&\snode[red!20]\&\snode[red!20]\\
			\snode[red!20]\&\snode[red!20]\&\snode\\
			\snode[red!20]\&\snode[red!20]\&\dnode\\
			\snode\&\snode\\
			\dnode
		}
	\fi
}
\newcommand{\figsix}{
	\ifnum\aspdf=1\[
		\left(
			\vcdraw{\includegraphics{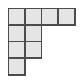}},
			\vcdraw{\includegraphics{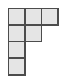}}
		\right)\kern+1.5cm
		\vcdraw{\includegraphics{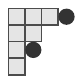}}\kern+1.5cm
		\vcdraw{\includegraphics{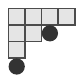}}
	\]\else
		\tabpart{
			\snode\&\snode\&\snode\&\snode\\
			\snode\&\snode\\
			\snode\&\snode\\
			\snode
		}
		\tabpart{
			\snode\&\snode\&\snode\\
			\snode\&\snode\\
			\snode\\
			\snode
		}
		\tabpart{
			\snode\&\snode\&\snode\&\dnode\\
			\snode\&\snode\\
			\snode\&\dnode\\
			\snode
		}
		\tabpart{
			\snode\&\snode\&\snode\&\snode\\
			\snode\&\snode\&\dnode\\
			\snode\\
			\dnode
		}
	\fi
}
\newcommand{\figfiv}{
	\centering
	\ifnum\aspdf=1
		\includegraphics{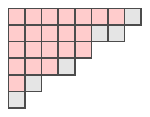}\qquad
		\includegraphics{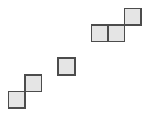}
	\else
		\tabpart{
			\snode[red!20]\&\snode[red!20]\&\snode[red!20]\&\snode[red!20]\&\snode[red!20]\&\snode[red!20]\&\snode[red!20]\&\snode\\
			\snode[red!20]\&\snode[red!20]\&\snode[red!20]\&\snode[red!20]\&\snode[red!20]\&\snode\&\snode\\
			\snode[red!20]\&\snode[red!20]\&\snode[red!20]\&\snode[red!20]\&\snode[red!20]\\
			\snode[red!20]\&\snode[red!20]\&\snode[red!20]\&\snode\\
			\snode[red!20]\&\snode\\
			\snode
		}

		\tabpart{
			\wnode\&\wnode\&\wnode\&\wnode\&\wnode\&\wnode\&\wnode\&\snode\\
			\wnode\&\wnode\&\wnode\&\wnode\&\wnode\&\snode\&\snode\\
			\wnode\&\wnode\&\wnode\&\wnode\\
			\wnode\&\wnode\&\wnode\&\snode\\
			\wnode\&\snode\\
			\snode
		}
	\fi
}
\newcommand{\figfou}{
	\centering
	\ifnum\aspdf=1
		\includegraphics{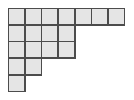}
	\else
		\tabpart{
			\snode\&\snode\&\snode\&\snode\&\snode\&\snode\&\snode\\
			\snode\&\snode\&\snode\&\snode\\
			\snode\&\snode\&\snode\&\snode\\
			\snode\&\snode\\
			\snode
		}
	\fi
}
\newcommand{\figthr}{
	\ifnum\aspdf=1\[
		\includegraphics[scale=0.8]{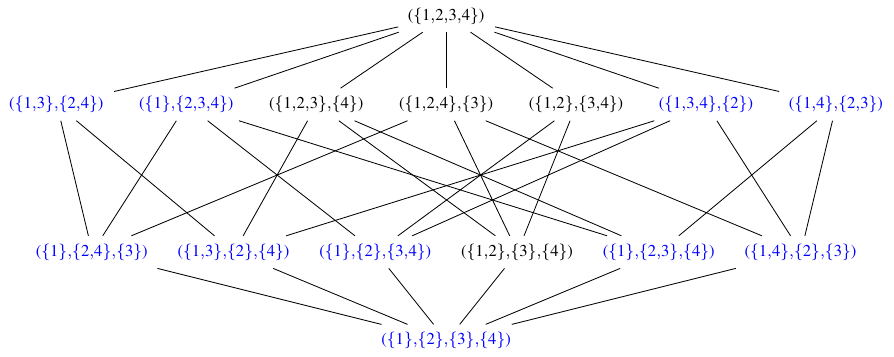}
	\]\else\[
		\begin{tikzpicture}
			\node(O)at(0,5.5){$_{(\{1,2,3,4\})}$};
			\node(H)at(-6.6,4){$_{\blue(\{1,3\},\{2,4\})}$};
			\node(I)at(-4.4,4){$_{\blue(\{1\},\{2,3,4\})}$};
			\node(J)at(-2.2,4){$_{(\{1,2,3\},\{4\})}$};
			\node(K)at(0,4){$_{(\{1,2,4\},\{3\})}$};
			\node(L)at(+2.2,4){$_{(\{1,2\},\{3,4\})}$};
			\node(M)at(+4.4,4){$_{\blue(\{1,3,4\},\{2\})}$};
			\node(N)at(+6.6,4){$_{\blue(\{1,4\},\{2,3\})}$};
			\node(B)at(-6.0,1.5){$_{\blue(\{1\},\{2,4\},\{3\})}$};
			\node(C)at(-3.6,1.5){$_{\blue(\{1,3\},\{2\},\{4\})}$};
			\node(D)at(-1.2,1.5){$_{\blue(\{1\},\{2\},\{3,4\})}$};
			\node(E)at(+1.2,1.5){$_{(\{1,2\},\{3\},\{4\})}$};
			\node(F)at(+3.6,1.5){$_{\blue(\{1\},\{2,3\},\{4\})}$};
			\node(G)at(+6.0,1.5){$_{\blue(\{1,4\},\{2\},\{3\})}$};
			\node(A)at(0,0){$_{\blue(\{1\},\{2\},\{3\},\{4\})}$};
			\draw(B)--(H);\draw(B)--(I);\draw(B)--(K);
			\draw(C)--(H);\draw(C)--(M);\draw(C)--(J);
			\draw(D)--(I);\draw(D)--(L);\draw(D)--(M);
			\draw(E)--(J);\draw(E)--(K);\draw(E)--(L);
			\draw(F)--(N);\draw(F)--(I);\draw(F)--(J);
			\draw(G)--(N);\draw(G)--(M);\draw(G)--(K);
			\draw(A)--(B);\draw(A)--(C);\draw(A)--(D);\draw(A)--(E);\draw(A)--(F);\draw(A)--(G);
			\draw(O)--(H);\draw(O)--(I);\draw(O)--(J);\draw(O)--(K);\draw(O)--(L);\draw(O)--(M);\draw(O)--(N);
		\end{tikzpicture}
	\]\fi
}
\newcommand{\figtwo}{
	\ifnum\aspdf=1\[
		\includegraphics[scale=0.8]{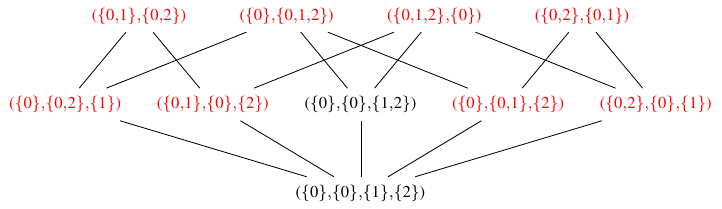}
	\]\else\[
		\begin{tikzpicture}
			\node(G)at(-3.75,3){$_{\red(\{0,1\},\{0,2\})}$};
			\node(H)at(-1.25,3){$_{\red(\{0\},\{0,1,2\})}$};
			\node(I)at(+1.25,3){$_{\red(\{0,1,2\},\{0\})}$};
			\node(J)at(+3.75,3){$_{\red(\{0,2\},\{0,1\})}$};
			\node(B)at(-5,1.5){$_{\red\red(\{0\},\{0,2\},\{1\})}$};
			\node(C)at(-2.5,1.5){$_{\red(\{0,1\},\{0\},\{2\})}$};
			\node(D)at(0,1.5){$_{(\{0\},\{0\},\{1,2\})}$};
			\node(E)at(+2.5,1.5){$_{\red\red(\{0\},\{0,1\},\{2\})}$};
			\node(F)at(+5,1.5){$_{\red(\{0,2\},\{0\},\{1\})}$};
			\node(A)at(0,0){$_{(\{0\},\{0\},\{1\},\{2\})}$};
			\draw(A)--(B);\draw(A)--(C);\draw(A)--(D);\draw(A)--(E);\draw(A)--(F);
			\draw(G)--(B);\draw(G)--(C);
			\draw(H)--(B);\draw(H)--(D);\draw(H)--(E);
			\draw(I)--(C);\draw(I)--(D);\draw(I)--(F);
			\draw(J)--(E);\draw(J)--(F);
		\end{tikzpicture}
	\]\fi
}
\newcommand{\figone}{
	\ifnum\aspdf=1\[
		\includegraphics[scale=0.8]{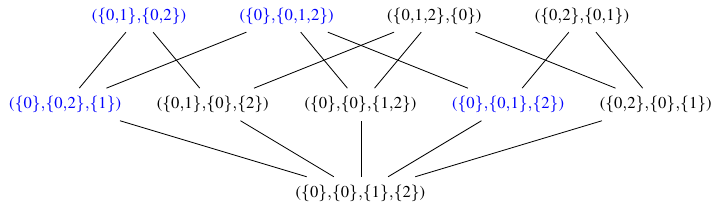}
	\]\else\[
		\begin{tikzpicture}
			\node(G)at(-3.75,3){$_{\blue(\{0,1\},\{0,2\})}$};
			\node(H)at(-1.25,3){$_{\blue(\{0\},\{0,1,2\})}$};
			\node(I)at(+1.25,3){$_{(\{0,1,2\},\{0\})}$};
			\node(J)at(+3.75,3){$_{(\{0,2\},\{0,1\})}$};
			\node(B)at(-5,1.5){$_{\blue(\{0\},\{0,2\},\{1\})}$};
			\node(C)at(-2.5,1.5){$_{(\{0,1\},\{0\},\{2\})}$};
			\node(D)at(0,1.5){$_{(\{0\},\{0\},\{1,2\})}$};
			\node(E)at(+2.5,1.5){$_{\blue(\{0\},\{0,1\},\{2\})}$};
			\node(F)at(+5,1.5){$_{(\{0,2\},\{0\},\{1\})}$};
			\node(A)at(0,0){$_{(\{0\},\{0\},\{1\},\{2\})}$};
			\draw(A)--(B);\draw(A)--(C);\draw(A)--(D);\draw(A)--(E);\draw(A)--(F);
			\draw(G)--(B);\draw(G)--(C);
			\draw(H)--(B);\draw(H)--(D);\draw(H)--(E);
			\draw(I)--(C);\draw(I)--(D);\draw(I)--(F);
			\draw(J)--(E);\draw(J)--(F);
		\end{tikzpicture}
	\]\fi
}
\begin{document}

\begin{abstract}
In 2004, Rosas and Sagan developed the theory of symmetric functions in noncommuting variables, achieving results analogous to classical symmetric functions. On the other hand, the same year, Desrosiers, Lapointe and Mathieu introduced the theory of symmetric functions in superspace, involving both commuting and anticommuting variables, extending the classic theory. Here, we introduce symmetric functions in noncommuting variables in superspace. We extend the classical symmetric functions in noncommuting variables to superspace: monomial, power sum, elementary and complete homogeneous functions. These functions generalize both those studied by Rosas and Sagan and those studied by Desrosiers, Lapointe, and Mathieu. Additionally, we define Schur--type functions in noncommuting variables in superspace.
\end{abstract}

\maketitle


\section{Introduction}

Symmetric functions play a significant role in mathematics, particularly in areas such as algebraic combinatorics, representation theory, and Hopf algebras, among others \cite{Mac99,St99}. They have been widely studied and are important not only in mathematics but also in connection with integrable models in physics \cite{MiJiDa00}. Specifically, symmetric functions turn out to be eigenfunctions of the Hamiltonian of the Calogero--Moser--Sutherland model \cite{Ruij99,VaDi00}. Roughly speaking, symmetric functions are formal power series in infinitely many commuting variables, remaining invariant under any permutation of their variables. Symmetric functions form an algebra with bases indexed by partitions of integers. Some of these bases include the monomial symmetric functions, power sum symmetric functions, elementary symmetric functions, and complete homogeneous symmetric functions.

An important family of symmetric functions that form a basis are the so-called Schur functions, appearing as limits of characters of the general linear group and associated with the irreducible characters of the symmetric group via the Schur--Weyl duality. Schur functions can be defined through various methods: by using semistandard Young tableaux, via the Jacobi--Trudy identity, or directly through a quotient involving Vandermonde-type determinants \cite[Chapter 7]{St99}.

Several generalizations of the algebra of symmetric functions $\SSym$ have been studied, such as the algebra of quasisymmetric functions $\QSym$ \cite{Ges84} and its dual Hopf algebra of noncommuting symmetric functions $\NSym$ \cite{GKLLRT95}, among others \cite{PoRe95,DuHiTh02,GrRe14}. Quasisymmetric functions can be regarded as a refinement of symmetric functions in the sense that they remain invariant under a weaker form of symmetry.

Another generalization of symmetric functions involves noncommuting variables, introduced by Wolf in 1936 \cite{Wol36}. The algebra of symmetric functions in noncommuting variables $\NCSym$ was studied by Rosas and Sagan in \cite{RoSa06}, establishing results similar to the commuting case. In particular, they defined bases analogous to the monomial, power, elementary, and complete homogeneous functions, indexed by set partitions. They showed relations between these bases and studied their projection to the algebra of symmetric functions in commuting variables. Also, by using combinatorics of tableaux, they introduced a family of Schur--type functions in noncommuting variables, satisfying properties similar to the classical Schur functions; however, this family does not constitute a basis of $\NCSym$. Using a noncommuting version of the Jacobi--Trudi determinant, a new family of Schur--type functions was introduced in \cite{AlXiWi22}, refining the one in \cite{RoSa06} and constituting a basis of $\NCSym$. Several results related to this algebra are known, such as the isomorphism with the algebra of rook placements \cite{CaSa11} and the connection with supercharacter theory \cite{AgEtAl12,Wol36}. Some results related to the Hopf structure of $\NCSym$ were obtained in \cite{BeReRoZa08,BeHoRoZa06,BeZa09}.

Given the connection of symmetric functions with integrable models, and motivated by a supersymmetric generalization of the Calogero--Moser--Sutherland model \cite{DeLaMa01,DeLaMaProc04}, it was developed in \cite{DeLaMa03,DeLaMa04} a new family of symmetric functions called \emph{symmetric functions in superspace}, involving commuting variables $x=(x_1,x_2,\ldots)$ together with anticommuting variables $\theta=(\theta_1,\theta_2,\ldots)$, in which the symmetric group acts simultaneously. These functions have been widely studied in recent years, extending results of classic symmetric functions. Classic bases such as monomial, power sum, elementary, and complete homogeneous symmetric functions were extended to superspace in \cite{DeLaMa06}, indexed by \emph{superpartitions}. Several definitions of Schur--type functions in superspace were given in \cite{JoLa17}. On the other hand, it was proved in \cite{FiLaPi19} that $\QSym$ and $\NSym$ can be generalized to superspace as the Hopf algebras of quasisymmetric functions in superspace $\sQSym$ and of noncommuting symmetric functions in superspace $\sNSym$, respectively. Some results related to the Hopf structure of $\sNSym$ were obtained in \cite{ArGoMa24}.

A natural problem is to consider symmetric functions involving noncommuting and anticommuting variables, which generalize both symmetric functions in noncommuting variables and symmetric functions in superspace. Here, we address this problem. Specifically, we extend the classical bases and the Schur--type functions in noncommuting variables to superspace, which project naturally onto their commuting counterparts in superspace. The paper is organized as follows.

In Section 2, we introduce two new classes of combinatorial objects called \emph{partial set supercompositions} and \emph{set superpartitions}, which generalize classic set partitions and are closely related to superpartitions. These objects will be used to study the bases of symmetric functions in noncommuting variables in superspace. We define \emph{strongly coarsening} and \emph{coarsening} relations on these combinatorial objects, giving them structures of partially ordered sets. Also, we describe actions of the symmetric group on these objects.

In Section 3, we introduce the algebra of symmetric functions in noncommuting variables in superspace $\sNCSym$. We define monomial symmetric functions in superspace and show that these ones form a basis of $\sNCSym$. Also, we provide combinatorial formulas for the product of these functions. Other bases of $\sNCSym$ are studied in Section 4. In particular, we define power sum, elementary, and complete homogeneous symmetric functions in superspace. We also show the relations between these bases.

Section 5 is devoted to studying how symmetric functions in noncommuting variables in superspace project to the algebra of functions in commuting variables in superspace $\sSym$. First, we recall classic symmetric functions in commuting variables in superspace. Then, we show formulas that project our bases onto their commuting counterparts, and we prove that there is an isometry from $\sNCSym$ to $\sSym$.

In Section 6, we define Schur--type functions in noncommuting variables in superspace in the sense of \cite{RoSa06}, using super semistandard Young tableaux introduced in \cite{JoLa17}. We express these functions in terms of the monomial basis and show their projection onto Schur--type functions in commuting variables in superspace, also given in \cite{JoLa17}.

Here, $\N$ denotes the set of positive integers, and $\N_0:=\N\cup\{0\}$.

Given integers $m,n$, denote by $[m,n]$ the interval $\{m,\ldots,n\}$, and $[m,n]_0:=[m,n]\cup\{0\}$. In particular, if $m=1$ we will denote them, respectively, by $[n]$ and $[n]_0$ instead. Note that $[0]$ is empty and $[0]_0=\{0\}$.

For every integer $n\geq1$, we denote permutations $\sigma:[n]\to[n]$ as words $\sigma(1)\cdots\sigma(n)$ in $[n]$, and we denote by $\inv(\sigma)$ the \emph{number of inversions} of $\sigma$, that is, the minimal number of commutations needed to get $12\cdots n$ from $\sigma(1)\cdots\sigma(n)$. The \emph{symmetric group} of permutations of $[n]$, will be denoted by $\Sym_n$, and we set ${\displaystyle\Sym=\bigsqcup_{n\geq1}\Sym_n}$.

\section{Set superpartitions}

Here we introduce a new class of combinatorial objects, which will index the bases of the algebra of symmetric functions in superspace. These ones can be regarded as a extension of the classic set partitions. Note that set superpartitions introduced here are different of the ones defined in \cite{RhTi22}.

\subsection{Preliminaries}

Recall that a \emph{set partition} of a set $X$ is a collection $I$ of nonempty subsets $I_1,\ldots,I_k$ of $X$, called \emph{blocks}, such that $I_1\cup\cdots\cup I_k=X$ and $I_i\cap I_j=\emptyset$ for all $i<j$. If $X$ is a set of integers, it is usual to denote $I=\{I_1,\ldots,I_k\}$ by $I=(I_{a_1},\ldots,I_{a_k})$ instead, where $\min(I_{a_i})<\min(I_{a_j})$ whenever $i<j$. The collection of set partitions of $X$ is denoted by $\P(X)$. For $I,J\in\P(X)$, write $I\preceq J$ if each block of $J$ is a union of blocks of $I$. It is well known that $\preceq$ is a partial order that gives to $\P(X)$ a structure of lattice with \emph{meet operation} $\wedge$ and \emph{join operation} $\vee$. For instance, for $X=[4]$ the lattice structure of $\P(X)$ is given as in Figure \ref{026}.
\begin{figure}[H]
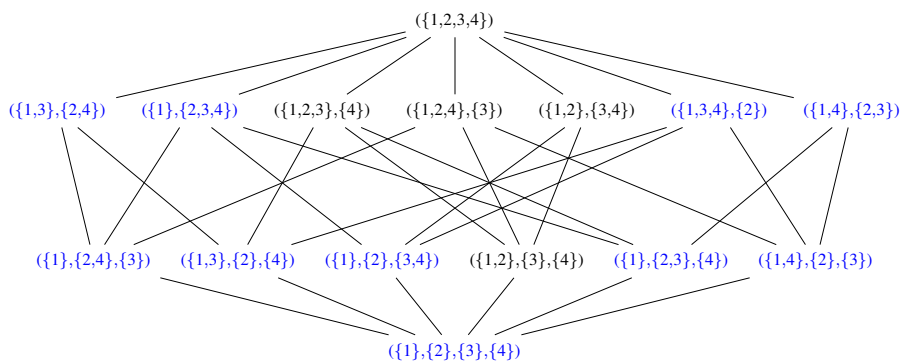

\figthr
\caption{Images of partial set supercompositions of bidegree $(2,2)$ in the lattice of set partitions of $[4]$.}
\label{026}
\end{figure}
For every nonnegative integer $n$, the set $\P([n])$ is commonly denoted by $\P_n$ instead. It is well known that the number of set partitions of $[n]$ into $1\leq k\leq n$ blocks is the \emph{Stirling number} (of the second kind) $S(n,k)$ \cite[A008277]{OEIS}, and that the number of all set partitions of $[n]$ is the $n$th Bell number $b_n$ \cite[A000110]{OEIS}.

\subsection{Partial set supercompositions}

In what follows, for every subset $A$ of $\N_0$, we denote $A\backslash\{0\}$ by $A^+$ instead. Also, for subsets $A,B$ of $\N_0$, we set $A\oplus B$ as empty if $0\in A\cap B$ and $A\oplus B=A\cup B$ otherwise. This operation is not associative in general, indeed $(\{1\}\oplus\{0,1\})\oplus\{0,1\}=\{\}$ and $\{1\}\oplus(\{0,1\}\oplus\{0,1\})=\{1\}$. However, if either none or at most one of the sets $A,B,C$ contains zero, then $(A\oplus B)\oplus C=A\oplus(B\oplus C)$.

A \emph{partial set supercomposition} $K$, of \emph{degree} $n=:\deg(K)$ and \emph{fermionic degree} $m=:\df(K)\leq n+1$, is a tuple $(K_1,\ldots,K_q)$ of nonempty subsets of $[n]_0$, called \emph{blocks}, satisfying the following conditions:\begin{enumerate}
\item $(K_1\cup\cdots\cup K_q)^+=[n]$ and $K_i\cap K_j\subseteq\{0\}$ for all $i\neq j$.
\item $0\in K_i$ for all $i\in[m]$ and $\min(K_i)<\min(K_j)$ for all $m\leq i<j\leq q$.
\end{enumerate}
Each block of $K$ containing $0$ is called a \emph{fermionic block}. We say that $(n,m)$ is the \emph{bidegree} of $K$. The number of blocks of $K$ is called the \emph{length} of it and is denoted by $\ell(K)$. The collection $K^+$ formed by the nonempty sets $K_i^+$ with $i\in[q]$ is a set partition of $[n]$ which we call the \emph{underlying set partition} of $K$. For instance, below a partial set supercomposition of bidegree $(7,3)$:\begin{equation}\label{001}K=(\{0,4\},\{0,2,7\},\{0\},\{1,5,6\},\{3\}).\end{equation}Observe that, by definition, partial set supercompositions may contain many blocks $\{0\}$. For instance, the following is a partial set supercomposition of bidegree $(n,m)$:\[\hat{0}_{n,m}=(\underbrace{\{0\},\ldots,\{0\}}_{m\text{--times}},\{1\},\ldots,\{n\}).\]The collection of partial set supercompositions of degree $n$ will be denoted by $\sC_n$, and the subcollection of it formed by the ones of fermionic degree $m$ will be denoted by $\sC_{n,m}$.

For every $K=(K_1,\ldots,K_q)\in\sC_{n,m}$ and every pair of permutations $(\sigma,\delta)\in\Sym_m\times\Sym_n$, we define\[\sigma\fcirc K=(K_{\sigma(1)},\ldots,K_{\sigma(m)},K_{m+1},\ldots,K_q)\qquad\text{and}\qquad\delta\circ 
K=(\delta(K_1),\ldots,\delta(K_q))
.\]Observe that the operators $\fcirc$ and $\circ$ define group actions of $\Sym_n$ and $\Sym_m$, respectively, on the collection of partial set supercompositions of bidegree $(n,m)$. Thus $\sigma\fcirc(\sigma'\fcirc K)=(\sigma\sigma')\fcirc K$ and $\delta\circ(\delta'\circ K)=(\delta\delta')\circ K$.

\subsection{Coarsening and strongly coarsening}

Here we stablish two partial orders on partial super set compositions.

Given $K,L\in\sC_{n,m}$, we say that $L$ is \emph{strongly coarser} than $K$, or that $K$ is \emph{strongly finer} than $L$, denoted by $K\sqsubseteq L$, if the following conditions hold:
\begin{enumerate}
\item The $i$th fermionic block of $L$ is the $i$th fermionic block of $K$ joined with some possible nonfermionic blocks of $K$.
\item Each nonfermionic block of $L$ is a union of nonfermionic blocks of $K$.
\end{enumerate}In this case $K$ is called a \emph{strongly refinement} of $L$. The \emph{strongly coarsening} relation $\sqsubseteq$ is a partial order on $\sC_{n,m}$ that gives to it a structure of complete meet--semilattice with operation $\sqcap$ and infimum $\hat{0}_{n,m}$. See Figure \ref{024}. Observe that partial set supercomposition whose blocks are all fermionic are maximal with respect to $\sqsubseteq$.
\begin{figure}[H]
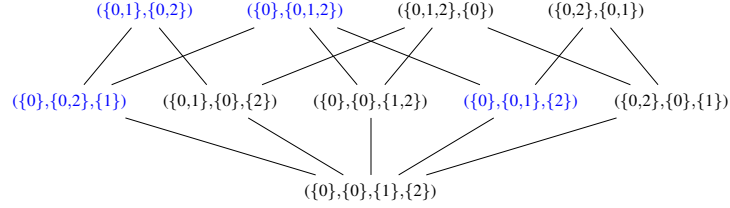

\figone
\caption{Set superpartitions in the meet--semilattice of partial set supercompositions of bidegree $(2,2)$.}
\label{024}
\end{figure}

For $r\in\Z$, the \emph{$r$-shift} of $A\subseteq\N_0$ is the set $A[r]$ obtained by adding $r$ to each nonzero element of $A$. The \emph{$r$-shift} of a partial set supercomposition $K=(K_1,\ldots,K_q)$ is the collection $K[r]=(K_1[r],\ldots,K_q[r])$. For instance, if $K=(\{0\},\{0,3,5\},\{1,2,4\})$ then $K[2]=(\{0\},\{0,5,7\},\{3,4,6\})$.

The \emph{standardization} of $K\in\sC_{n,m}$ is the set partition $\std(K)\in\P_{n+m}$ obtained from $K[m]$ by renumbering the zeros from $1$ to $m$ with respect to the sorting of the blocks of $K$. This operator defines an injective order preserving map of $(\sC_{n,m},\sqsubseteq)$ into $(\P_{n+m},\preceq)$. Observe that $I\in\P_{n+m}$ belongs to $\std(\sC_{n,m})$ if and only if the elements of $[m]$ belong to different blocks of $I$. For instance $\std(\{0,1\},\{0\},\{2\})=(\{1,3\},\{2\},\{4\})$.
\begin{pro}\label{027}
The image of $\sC_{n,m}$ under the standardization map is a convex subposet of $\P_{n+m}$. In consequence, due to \cite[Lemma, p. 359]{Ro64}, for every partial set supercomposition $K=(K_1,\ldots,K_q)$, we have\[[\hat{0}_{n,m},K]\simeq\prod_{i=1}^q\P_{|K_i|}.\]
\end{pro}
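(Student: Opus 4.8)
The plan is to derive both assertions from the description of $\std(\sC_{n,m})$ recorded just above the statement: a set partition $I\in\P_{n+m}$ lies in $\std(\sC_{n,m})$ precisely when $1,\dots,m$ occupy pairwise distinct blocks of $I$.

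\emph{Convexity.} I would in fact prove the stronger fact that $\std(\sC_{n,m})$ is an order ideal of $\P_{n+m}$; this implies convexity, and since an order ideal that contains an element $J$ contains the whole segment from the minimum of $\P_{n+m}$ up to $J$, it is exactly what the second part needs. Let $I\in\std(\sC_{n,m})$ and $H\preceq I$ in $\P_{n+m}$. Every block of $H$ is contained in a block of $I$, so, as $1,\dots,m$ lie in distinct blocks of $I$, they also lie in distinct blocks of $H$; hence $H\in\std(\sC_{n,m})$.

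\emph{The interval decomposition.} The first step is to upgrade $\std\colon(\sC_{n,m},\sqsubseteq)\to(\P_{n+m},\preceq)$ from an injective order preserving map (already available) to an order embedding, i.e.\ to show $\std(L)\preceq\std(K)$ implies $L\sqsubseteq K$. This is read off from the blocks: if $K=(K_1,\dots,K_q)$ has fermionic blocks $K_1,\dots,K_m$, then the blocks of $\std(K)$ are $\{i\}\cup K_i^+[m]$ for $i\in[m]$ and $K_j[m]$ for $m<j\le q$; from $\std(L)\preceq\std(K)$ one sees that the block of $\std(K)$ carrying the label $i$ is the union of the $i$-th fermionic block of $\std(L)$ with some label-free (hence nonfermionic) blocks of $\std(L)$, while every other block of $\std(K)$ is a union of label-free blocks of $\std(L)$; undoing the shift by $m$ and the renumbering of the zeros turns these into conditions (1) and (2) in the definition of $L\sqsubseteq K$. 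Granting this, $\std$ sends $[\hat{0}_{n,m},K]$ bijectively onto $\{H\in\P_{n+m}:H\preceq\std(K)\}$: order preservation together with the embedding property identify $\std([\hat{0}_{n,m},K])$ with $\{H\in\std(\sC_{n,m}):H\preceq\std(K)\}$, and the order ideal property (applied with $J=\std(K)$) shows this equals the full set $\{H\in\P_{n+m}:H\preceq\std(K)\}$; being a restriction of an order embedding, the bijection is a poset isomorphism. Finally, \cite[Lemma, p. 359]{Ro64} applied to $\std(K)$ gives $\{H\in\P_{n+m}:H\preceq\std(K)\}\simeq\prod_{i=1}^q\P_{|\std(K)_i|}$, and $|\std(K)_i|=|K_i|$ for each $i$ because the shift by $m$ is injective and each fermionic block merely trades its $0$ for the label $i$. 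Thus $[\hat{0}_{n,m},K]\simeq\prod_{i=1}^q\P_{|K_i|}$.

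The step I expect to require the most care is promoting $\std$ to an order embedding, as one must verify that the ordinary refinement order on standardizations, pulled back along $\std$, matches the $i$-th fermionic block of the finer object with the $i$-th fermionic block of the coarser one and respects the sorting of nonfermionic blocks by minimum, i.e.\ exactly the features distinguishing $\sqsubseteq$ from plain refinement. (Alternatively one can sidestep $\std$ entirely and describe $L\sqsubseteq K$ directly as the independent choice of an ordinary set partition of each block $K_i$ of $K$, the part containing $0$ playing the role of the fermionic block; this reproves the product decomposition and amounts to re-deriving \cite{Ro64} in the present situation, but the route through convexity is the shorter write-up.)
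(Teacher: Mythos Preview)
Your argument is correct and follows the paper's approach: the paper's proof of convexity in fact only uses the upper bound $I\preceq\std(L)$, so it is the same downward-closure (order ideal) argument you give, and the interval decomposition is then attributed to Rota. You go further than the paper in explicitly verifying that $\std$ is an order \emph{embedding} (not merely injective and order preserving), which is needed to transport the interval $[\hat 0_{n,m},K]$ to an interval in $\P_{n+m}$; the paper leaves this step implicit, so your write-up is in fact more complete on this point.
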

\begin{proof}
Let $K,L\in\sC_{n,m}$, and let $I\in\P_{n+m}$ such that $\std(K)\preceq I\preceq\std(L)$. Since $L$ belongs to $\sC_{n,m}$ and $I\preceq\std(L)$, then every pair of elements of $[m]$ belong to different blocks of $I$. This implies that $I=\std(N)$ for some $N\in\sC_{n,m}$. Therefore $\std(\sC_{n,m})$ is a convex subposet of $\P_{n+m}$. See Figure \ref{026}.
\end{proof}

For $K,L\in\sC_n$, we say that $L$ is \emph{coarser} than $K$, or that $K$ is \emph{finer} than $L$, denoted by $K\preceq L$, if each block of $L$ is obtained by $\oplus$--operating some blocks of $K$. In this case $K$ is called a \emph{refinement} of $L$. Observe that as $L$ contains no empty blocks, then it cannot be obtained by $\oplus$--operating more than one fermionic block of $K$. With this \emph{coarsening relation}, the collection $\sC_n$ becomes a partially ordered set. Note that every partial set supercomposition whose blocks are all fermionic is maximal with respect to $\preceq$.

\subsection{Set superpartitions}

We say that $I=(I_1,\ldots,I_k)\in\sC_n$ is a \emph{set superpartition} if $I_i\neq I_j$ and $\min(I_i\backslash I_j)<\min(I_j\backslash I_i)$ for all $i,j\in[k]$ with $i<j$, where $\min(\emptyset):=0$. The collection of set superpartitions of degree $n$ will be denoted by $\sP_n$, and the subcollection of it formed by the ones of fermionic degree $m$ will be denoted by $\sP_{n,m}$.

Observe that every set superpartition that does not contain the block $\{0\}$ is obtained by inserting, possibly, a zero in some blocks of a usual set partition. This implies that, in $\sP_n$, there are $sb_n^*$ \cite[A001861]{OEIS} of these set superpartitions. Note that $sb_n^*$ is obtained by multiplying every addend of $b_n$ by $2^k$. Thus $|\sP_n|=2sb_n^*=:sb_n$.

We say that $I\in\sP_n$ is \emph{convex} if each of its blocks is an interval, that is, for each $B\in I$ there are $a,b\in[n]$ such that, either $B=[a,b]$ or $B=[a,b]_0$. For instance, the following is convex of bidegree $(7,3)$:\[I=(\{0\},\{0,1,2\},\{0,3\},\{4,5,7\},\{6\}).\]Observe that every $I\in\sP_{n,m}$ can be written by means of the $\circ$--action of a convex set superpartition $J$, that is, $I=\delta\circ J$. For instance, if $I=(\{0\},\{0,4\},\{0,6\},\{1,3,5\},\{2\})$, then\[I=\delta\circ\underbrace{(\{0\},\{0,1\},\{0,2\},\{3,4,5\},\{6\})}_J,\quad\text{where}\quad\delta=461352.\]

For every partial set supercomposition $K$, we will denote by $\bar{K}$ the unique set superpartition obtained from $K$ by removing its possible repeated blocks and so sorting the fermionic blocks. For instance:\[K=(\{0,3\},\{0\},\{0,2\},\{0\},\{1,5\},\{4\})\quad\to\quad\bar{K}=(\{0\},\{0,2\},\{0,3\},\{1,5\},\{4\}).\]We say that $K\in\sC_{n,m}$ is \emph{nontrivial} if it has at most one block $\{0\}$, that is, there are $\sigma\in\Sym_m$ and $I\in\sP_{n,m}$ such that $K=\sigma\fcirc I$. See Figure \ref{025}.\begin{figure}[H]
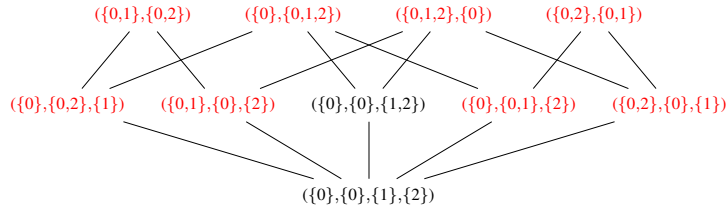

\figtwo
\caption{Nontrivial elements in the meet--semilattice of partial set supercompositions of bidegree $(2,2)$.}
\label{025}
\end{figure}
Observe that as nontrivial partial set supercompositions are obtained by resorting the fermionic blocks of set superpartitions, then $K\sqsubseteq L$ implies that $\bar{K}\preceq\bar{L}$.

In what follows, for every $A\subseteq[n]_0$ and every collection $K=(K_1,\ldots,K_q)$ of subsets of $[n]_0$, we set\[K\backslash A=(K_1\backslash A,\ldots,K_q\backslash A)\,\backslash\,\emptyset.\]
\begin{pro}\label{022}
Given $K=(K_1,\ldots,K_h)$ and $L=(L_1,\ldots,L_k)$ in $\sC_{n,m}$, we have $K\sqcap L=(N_1,\ldots,N_q)$, where $N_i=K_i\cap L_i$ for all $i\in[m]$, and $(N_{m+1},\ldots,N_q)=(K^+\backslash A)\wedge(L^+\backslash A)$ with $A=N_1\cup\cdots\cup N_m$.
\end{pro}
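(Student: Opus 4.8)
The plan is to prove the formula by showing directly that the tuple $N=(N_1,\dots,N_q)$ described in the statement lies in $\sC_{n,m}$, is a lower bound of both $K$ and $L$ for $\sqsubseteq$, and dominates every common lower bound; since $(\sC_{n,m},\sqsubseteq)$ is a meet--semilattice, this identifies $N$ with $K\sqcap L$. (An alternative I considered is to transport the problem to $\P_{n+m}$ along $\std$: its image is convex by Proposition~\ref{027}, and since no two elements of $[m]$ can lie in a common block of a common refinement, $\std(\sC_{n,m})$ is closed under the meet of $\P_{n+m}$, so $\std(K\sqcap L)=\std(K)\wedge\std(L)$; one would then only have to un-standardize. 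I prefer the direct route, as it produces the closed form with no renumbering bookkeeping.)

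First I would fix $A=N_1\cup\dots\cup N_m$ and record the two elementary but load-bearing facts about how $A$ meets the blocks of $K$ (and symmetrically of $L$): since the blocks of $K$ containing positive integers are pairwise disjoint on their positive parts, one has $K_i^+\cap A^+=N_i^+$ for every fermionic index $i\in[m]$, and $K_j^+\cap A^+=\emptyset$ for every nonfermionic block $K_j$. With these in hand, the steps ``$N\in\sC_{n,m}$'' and ``$N\sqsubseteq K$, $N\sqsubseteq L$'' are routine: $K^+\backslash A$ and $L^+\backslash A$ are set partitions of the common ground set $[n]\backslash A^+$, so $(K^+\backslash A)\wedge(L^+\backslash A)$ is defined in $\P([n]\backslash A^+)$ and refines $K^+\backslash A$; since $K_i\backslash N_i=K_i^+\backslash A^+$ is a single block of $K^+\backslash A$ and each nonfermionic $K_j$ equals the block $K_j^+$ of $K^+\backslash A$, every block of $K$ is visibly a union of blocks of $N$ in the manner demanded by the definition of $\sqsubseteq$. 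Conditions (1)--(2) in the definition of $\sC_{n,m}$ for $N$ are immediate (the fermionic blocks all have minimum $0$; the nonfermionic ones are ordered by the standing convention on set partitions).

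The heart of the argument — and the step I expect to be the main obstacle — is maximality: given $M=(M_1,\dots,M_p)\in\sC_{n,m}$ with $M\sqsubseteq K$ and $M\sqsubseteq L$, I must show $M\sqsubseteq N$. From the two hypotheses one gets $M_i\subseteq K_i\cap L_i=N_i$ for $i\in[m]$, and that each nonfermionic block of $M$ lies inside a single block of $K$ and inside a single block of $L$. The key lemma I would isolate is a \emph{dichotomy}: every nonfermionic block of $M$ is contained either in $A^+$ or in $[n]\backslash A^+$. This is exactly where the interaction between the fermionic blocks of $K$ and those of $L$ must be controlled: if a nonfermionic $M_j$ met $A^+$ at a point $x\in K_i^+\cap L_i^+$ with $i\in[m]$, then — the positive-element-bearing blocks of $K$, resp.\ of $L$, being disjoint — $M_j$ would have to lie in $K_i^+$ and in $L_i^+$, hence in $N_i^+\subseteq A^+$. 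Granting the dichotomy, the nonfermionic blocks of $M$ lying in $[n]\backslash A^+$ cover $[n]\backslash A^+$ (a point of $[n]\backslash A^+$ cannot lie in a fermionic block of $M$, since those have positive parts inside $\bigcup_{i\le m}N_i^+=A^+$), and each of them lies inside a single block of $K^+\backslash A$ and of $L^+\backslash A$, hence inside a single block of their meet; so every nonfermionic block of $N$ is a union of nonfermionic blocks of $M$, which is condition (2) of $M\sqsubseteq N$. Condition (1) then falls out because $N_i\backslash M_i=(K_i^+\backslash M_i^+)\cap(L_i^+\backslash M_i^+)$ is an intersection of two unions of nonfermionic blocks of $M$, hence again such a union.

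In summary, the routine part is that $N$ is a well-formed common lower bound; the delicate part is showing it is the largest, and the crux there is the dichotomy lemma, which forces each nonfermionic block of an arbitrary common refinement to one side of the partition $A^+\sqcup([n]\backslash A^+)$. Once that is established, maximality reduces to the universal property of $\wedge$ in $\P([n]\backslash A^+)$, and the proposition follows.
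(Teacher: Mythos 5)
Your proof is correct, but it is organized differently from the paper's. You identify $N$ with $K\sqcap L$ via the universal property: you check $N\in\sC_{n,m}$, verify $N\sqsubseteq K$ and $N\sqsubseteq L$ using the identities $K_i^+\cap A^+=N_i^+$ for $i\in[m]$ and $K_j^+\cap A^+=\emptyset$ for $j>m$, and then show that every common lower bound $M$ satisfies $M\sqsubseteq N$ — the crux being your dichotomy lemma forcing each nonfermionic block of $M$ entirely into $A^+$ or into $[n]\backslash A^+$, after which condition (2) of $M\sqsubseteq N$ follows from the universal property of $\wedge$ in $\P([n]\backslash A^+)$ and condition (1) from writing $N_i\backslash M_i=(K_i^+\backslash M_i^+)\cap(L_i^+\backslash M_i^+)$ as an intersection of two unions of pairwise disjoint nonfermionic blocks of $M$; all of these steps check out. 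The paper instead leans on the previously asserted meet--semilattice structure of $(\sC_{n,m},\sqsubseteq)$: it records $N\sqsubseteq K$ and $N\sqsubseteq L$ (hence $N\sqsubseteq K\sqcap L$) and rules out $N\sqsubset K\sqcap L$ by contradiction, analyzing a single hypothetical block $B\supseteq N_i\cup N_j$ of $K\sqcap L$: if $N_i$ is fermionic then $B\subseteq K_i\cap L_i=N_i$, and otherwise $B$ avoids $A$ and lies in single blocks of $K$ and of $L$, hence in one block of $(K^+\backslash A)\wedge(L^+\backslash A)$, contradicting $B\supseteq N_i\cup N_j$. The combinatorial kernel is shared (a set avoiding $A$ and contained in one block of $K$ and one block of $L$ lies in one block of the meet partition), but your route is more self-contained — it does not presuppose that binary meets exist and in fact proves their existence — at the price of the dichotomy lemma and the block-by-block verification for an arbitrary $M$, whereas the paper's contradiction against $K\sqcap L$ itself is shorter once the semilattice claim is granted.
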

\begin{proof}
By definition, we have $N\sqsubseteq K$ and $N\sqsubseteq L$, where $N=(N_1,\ldots,N_q)$ as on the statement. Thus $N\sqsubseteq K\sqcap L$. Assume that $N\sqsubset K\sqcap L$, then $K\sqcap L$ contains a block $B=N_i\cup N_j$ for some $i,j\in[q]$ with $i<j$. Now, if $N_i$ is fermionic, then $B\subset K_i$ and $B\subset L_i$, which is a contradiction because $N_i=K_i\cap L_i$. On the other hand, if $N_i$ is not fermionic, then $B\cap A=\emptyset$ and $B\subseteq K_a\cap H_b$ for some $a,b\geq m$. This implies that $B$ is contained in some block of $(K^+\backslash A)\wedge(L^+\backslash A)$, which also is a contradiction. Therefore $N=K\sqcap L$.
\end{proof}

\section{Functions in superspace}\label{067}

In this section, we define the algebra of symmetric functions in noncommuting variables in superspace. We introduce monomial symmetric functions in superspace and show that these ones form a basis of it. Also, we give combinatorial formulas for the product of these functions.

Let $\sseries$ be the $\field$--algebra of formal power series of bounded degree presented by infinite generators $x_1,x_2,\ldots$ and $\theta_1,\theta_2,\ldots$, called \emph{variables}, subject to the relations $x_i\theta_j=\theta_jx_i$ and $\theta_i\theta_j=-\theta_j\theta_i$ for all $i,j\in\N$. Every element of $\sseries$ will be called a \emph{function (in noncommuting variables) in superspace}. Generators in $x:=(x_1,x_2,\ldots)$ will be called \emph{$x$--variables} and generators in $\theta:=(\theta_1,\theta_2,\ldots)$ will be called \emph{fermionic variables} or simply \emph{$\theta$--variables}. Observe that $\theta_i^2=0$ for all $i\in\N$.

Due to the relations that define $\sseries$, every monomial occurring in a function in superspace can be uniquely written as a product $q\theta_{a_1}\cdots\theta_{a_r}x_{b_1}\cdots x_{b_s}$ up the relations that define $\sseries$, where $q\in\field$ and $a_i,b_j\in\N$. For every nonnegative integer $n$ we will denote by $\sseries[n]$ the subspace formed by the functions in superspace whose monomials have exactly $n$ occurrences of $x$--variables. Similarly, for every nonnegative integer $m$ we denote by $\sseries[n,m]$ the subspace of functions in $\sseries[n]$ whose monomials have exactly $m$ occurrences of $\theta$--variables. Hence, we obtain the following decompositions:\[\sseries=\bigoplus_{n\geq0}\sseries[n]=\bigoplus_{n,m\geq0}\!\sseries[n,m].\]We call $n$ the \emph{degree} and $m$ the \emph{fermionic degree} of the elements of $\sseries[n,m]$.

\subsection{Symmetric functions in superspace}

For every $n\geq1$ the group $\Sym_n$ acts linearly on $\sseries$ via permutation of variables, that is, $s(x_i)=x_{s(i)}$ and $s(\theta_i)=\theta_{s(i)}$ for all $(s,i)\in\Sym_n\times\N$, where $s(i):=i$ whenever $i>n$. An element of $\sseries$ is called \emph{symmetric} if it is invariant under the action of $\Sym_n$ for all $n\in\N$. We will denote by $\sNCSym$ the subalgebra of $\sseries$ formed by the symmetric functions in superspace. Thus, we have\begin{equation}\label{000}\sNCSym=\bigoplus_{n\geq0}\sNCSym_n=\bigoplus_{n,m\geq0}\sNCSym_{n,m},\end{equation}where $\sNCSym_n=\sNCSym\cap\sseries[n]$ and $\sNCSym_{n,m}=\sNCSym\cap\sseries[n,m]$ for all $n,m\in\N_0$.

Clearly, the action of the symmetric group on $\sNCSym$ keep the number of occurrences of variables in monomials, that is, $\sigma(\sNCSym_{n,m})\subseteq\sNCSym_{n,m}$ for all permutation $\sigma$ and all $n,m\in\N_0$.

A monomial $q\theta_{a_1}\cdots\theta_{a_m}x_{b_1}\cdots x_{b_n}$ is said to be \emph{null--symmetric} if $\{a_1,\ldots,a_m\}\backslash\{b_1,\ldots,b_n\}$ has more than one element. For instance, the monomial $\theta_1\theta_2x_2$ is non null--symmetric, however $\theta_1\theta_3x_2$ is null--symmetric.

\begin{pro}\label{003}
There are no symmetric functions in superspace involving null--symmetric  monomials. In consequence, $\sNCSym_{n,m}=\{0\}$ for all $n,m\in\N_0$ with $m>n+1$. Thus,\[\sNCSym_n=\bigoplus_{m=0}^{n+1}\sNCSym_{n,m}.\]
\end{pro}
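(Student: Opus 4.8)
The plan is to prove the contrapositive-style statement by a direct symmetry argument. Suppose $f\in\sNCSym_{n,m}$ is symmetric and some monomial $M=q\,\theta_{a_1}\cdots\theta_{a_m}x_{b_1}\cdots x_{b_n}$ occurring in $f$ is null-symmetric, so the set $S=\{a_1,\ldots,a_m\}\setminus\{b_1,\ldots,b_n\}$ has at least two elements, say $p,p'\in S$ with $p\neq p'$. First I would choose the transposition $\tau=(p\;p')\in\Sym_N$ for $N$ large enough that $\tau$ fixes every index appearing in $f$ outside $\{p,p'\}$; since $f$ is symmetric, $\tau(f)=f$. Then I would compare the coefficient of $M$ in $f$ with the coefficient of $\tau(M)$ in $\tau(f)=f$.

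The key computation is that $\tau(M)=\pm M'$ where $M'$ is obtained from $M$ by swapping the roles of $p$ and $p'$ in the $\theta$-part (and, crucially, \emph{not} in the $x$-part, because neither $p$ nor $p'$ appears among the $b_j$'s — that is exactly what $p,p'\in S$ buys us). Because $p$ and $p'$ both occur exactly once in the $\theta$-word and nowhere in the $x$-word, rewriting $\tau(M)$ into the canonical form $q'\,\theta_{a'_1}\cdots\theta_{a'_m}x_{b_1}\cdots x_{b_n}$ produces a genuinely \emph{different} canonical monomial from $M$ (the multiset $\{a_1,\ldots,a_m\}$ is unchanged as a set only if $p,p'$ were already both present, but their positions in the sorted-up-to-sign normal form differ, and one checks the resulting normal-form monomials are distinct). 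Hence in $f$ the coefficient of $\tau(M)$'s normal form equals (up to the sign picked up from reordering the anticommuting $\theta$'s) the coefficient of $M$; iterating, or directly using that $\tau$ is an involution, one finds a contradiction unless the coefficient $q$ is zero. The cleanest way to phrase this: the orbit of $M$ under the subgroup generated by such transpositions forces infinitely many distinct normal-form monomials to appear with nonzero coefficient (since we may also translate $p'$ to arbitrarily large unused indices), contradicting that $f$ is a power series of bounded degree — or more simply, contradicting that $f$ lies in $\sseries[n,m]$, which by construction consists of bounded-degree series but in fact each homogeneous component is spanned by finitely many orbits only after quotienting, so the orbit-sum itself must vanish.

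Let me streamline to the argument I would actually write. Pick $p\in S$, and pick any index $t$ not occurring anywhere in $f$ and not equal to any $b_j$. Let $\tau=(p\;t)$. Applying $\tau$ to $M$ replaces $\theta_p$ by $\theta_t$ and leaves the $x$-part untouched (as $p\notin\{b_j\}$ and $t\notin\{b_j\}$); after renormalizing signs this is a new canonical monomial $M_t$ with $t$ in its $\theta$-support. Since $\tau(f)=f$, the coefficient of $M_t$ in $f$ is $\pm q\neq 0$. As $t$ ranges over the infinitely many indices absent from $f$, we obtain infinitely many distinct monomials $M_t$ with nonzero coefficient, all of degree $n$ and fermionic degree $m$ — but these all lie in a single $\Sym$-orbit and their sum (an infinite sum of monomials of the fixed bidegree $(n,m)$) must be a well-defined element of $\sseries[n,m]$; this is allowed. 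The real contradiction is subtler: I would instead argue that \emph{for each individual $t$}, applying $\tau=(p\;t)$ \emph{again} returns $M$, so consistency of signs under the involution forces $q=-q$, hence $q=0$ — provided the sign incurred is $-1$. The main obstacle, and the step requiring care, is precisely this sign bookkeeping: one must verify that conjugating by the transposition and renormalizing the anticommuting product yields an \emph{odd} permutation of the $\theta$-factors (or otherwise set up the orbit so a sign $-1$ is forced), since $\theta_i\theta_j=-\theta_j\theta_i$; if the naive swap gives sign $+1$ one instead uses a $3$-cycle through $p,p'$ and a fresh index, or plays two null-symmetric indices against each other, to manufacture the needed antisymmetry. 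Once the vanishing of all null-symmetric coefficients is established, the consequence $\sNCSym_{n,m}=\{0\}$ for $m>n+1$ is immediate: if $m>n+1$ then $m-n\geq 2$, so for \emph{any} monomial of bidegree $(n,m)$ the set $\{a_1,\ldots,a_m\}\setminus\{b_1,\ldots,b_n\}$ has cardinality at least $m-n\geq 2$, i.e.\ every such monomial is null-symmetric; hence $\sNCSym_{n,m}$ contains only the zero function, and the stated decomposition $\sNCSym_n=\bigoplus_{m=0}^{n+1}\sNCSym_{n,m}$ follows from \eqref{000}.
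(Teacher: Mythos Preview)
Your very first move --- taking $\tau=(p\;p')$ with $p,p'\in S$ --- is exactly the paper's proof, and it finishes the argument in one line. The problem is that you misanalyze what $\tau(M)$ is. You write that $\tau(M)$ normalizes to a ``genuinely \emph{different} canonical monomial from $M$''. It does not. Since $p,p'\notin\{b_1,\ldots,b_n\}$, the $x$-part is fixed by $\tau$; and since both $p$ and $p'$ occur (once each) among the $a_i$'s, applying $\tau$ to the $\theta$-word simply interchanges the letters $\theta_p$ and $\theta_{p'}$ at their respective positions. The resulting word has the \emph{same} multiset of $\theta$-indices as $M$, hence the same normal form --- but reaching that normal form requires one transposition of anticommuting factors, so $\tau(M)=-M$ as elements of $\sseries$. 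Now symmetry gives $f=\tau(f)$, and comparing the coefficient of $M$ on both sides yields $q=-q$, so $q=0$. That is the entire argument in the paper.

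Everything after your first paragraph is a detour caused by that misreading. The fresh-index trick $\tau=(p\;t)$ genuinely does produce a different monomial $M_t$, but then $\tau^2=\id$ returns $M$ with sign $+1$, so no contradiction arises that way; your worry that ``the sign incurred'' might be $+1$ is warranted for this $\tau$, which is why it is the wrong transposition to use. The $3$-cycle suggestion and ``playing two null-symmetric indices against each other'' circle back to $(p\;p')$, which you already had. The sign bookkeeping you flag as ``the main obstacle'' is in fact trivial once you use $(p\;p')$: a single swap of two anticommuting factors gives exactly $-1$. Your derivation of the consequence for $m>n+1$ is fine.
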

\begin{proof}
Let $f$ be a symmetric function in superspace, and let $u=q\theta_{a_1}\cdots\theta_{a_m}x_{b_1}\cdots x_{b_n}$ be a monomial occurring in it. If $u$ is null--symmetric, there are $i,j\in[m]$ such that $a_i,a_j\not\in\{b_1,\ldots,b_n\}$. Consider $\sigma$ be the transposition exchanging $a_i$ with $a_j$. As $f$ is symmetric, $\sigma(u)=-u$ also occurs in $f$. This holds for all null--symmetric monomial occurring in $f$. The equality $\sNCSym_{n,m}=\{0\}$ when $m>n+1$ is a consequence of the fact that every monomial with $n$ $x$--variables and $m>n+1$ $\theta$--variables is null--symmetric. 
\end{proof}

In order to study the bases of $\sNCSym$, we will consider other actions of the symmetric group on $\sseries$ in which $\Sym_n$ acts independently on the $x$--variables and on the $\theta$--variables. More specifically, given a monomial $u=q\theta_{a_1}\cdots\theta_{a_m}x_{b_1}\cdots x_{b_n}$ and permutations $(\sigma,\delta)\in\Sym_m\times\Sym_n$, we define\begin{equation}\label{055}\sigma\fcirc u=q\theta_{a_{\sigma(1)}}\cdots\theta_{a_{\sigma(m)}}x_{b_1}\cdots x_{b_n}\qquad\text{and}\qquad\delta\circ u=q\theta_{a_1}\cdots\theta_{a_m}x_{b_{\delta^{-1}(1)}}\cdots x_{b_{\delta^{-1}(n)}}.\end{equation}Observe that, as $\theta$--variables anticommute, we have $\sigma\fcirc u=(-1)^{\inv(\sigma)}u$.

\subsection{Monomial functions in superspace}

Here we introduce the basis of monomial functions in superspace.

\begin{dfn}\label{005}
The \emph{monomial function} of a partial set supercomposition $K=(K_1,\ldots,K_q)$ of bidegree $(n,m)$ is the function in superspace, defined as follows:\[m_K\,\,\,\,=\sum_{(a_1,\ldots,a_m,b_1,\ldots,b_n)}\theta_{a_1}\cdots\theta_{a_m}x_{b_1}\cdots x_{b_n},\]where\begin{enumerate}
\item For each $i\in[m]$, $a_i=b_j$ if and only if $j\in K_i^+$.\label{010}
\item For each $i,j\in[n]$, $b_i=b_j$ if and only if $i,j$ belong to the same block of $K$.\label{002}
\end{enumerate}
We will denote by $M(K)$ the set of all monomials satisfying conditions \eqref{010} and \eqref{002} above.
\end{dfn}
For instance, if $K=(\{0,2\},\{0,1,3\},\{4\})$ in $\sC_{4,2}$, we have
\[m_K=\theta_2\theta_1x_1x_2x_1x_3+\theta_3\theta_1x_1x_3x_1x_2+\theta_1\theta_2x_2x_1x_2x_3+\theta_3\theta_2x_2x_3x_2x_1+\theta_1\theta_3x_3x_1x_3x_2+\theta_2\theta_3x_3x_2x_3x_1+\cdots\]

\begin{pro}\label{004}
For every partial set supercomposition $K$, the function $m_K$ is symmetric. Moreover, for every monomial $u\in M(K)$, we have\[m_K=\sum_{\sigma\in\Sym}\sigma(u).\]
\end{pro}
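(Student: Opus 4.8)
The plan is to reduce both assertions to one statement about the set $M(K)$ of Definition \ref{005}: the symmetric group permutes $M(K)$, and it acts transitively on the nonzero monomials of $M(K)$. Since $m_K=\sum_{v\in M(K)}v$ by definition and zero monomials contribute nothing, this yields $m_K=\sum_{\sigma\in\Sym}\sigma(u)$, where (as is standard for monomial bases) $\sum_{\sigma\in\Sym}\sigma(u)$ denotes the multiplicity--free sum over the orbit $\Sym\cdot u$, so that the right side is a genuine element of $\sseries$ and not an infinite sum.

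First I would check that each $s\in\Sym$, say $s\in\Sym_N$, maps $M(K)$ into itself. For $v=\theta_{a_1}\cdots\theta_{a_m}x_{b_1}\cdots x_{b_n}\in M(K)$ the action substitutes indices position by position, never transposing two $\theta$'s, so $s(v)=\theta_{s(a_1)}\cdots\theta_{s(a_m)}x_{s(b_1)}\cdots x_{s(b_n)}$ carries coefficient $1$ with no anticommutation sign; since $s$ is injective on $\N$, conditions \eqref{010} and \eqref{002} — which only record when two $x$--indices agree and when an $a_i$ equals some $b_j$ — pass to $s(v)$, so $s(v)\in M(K)$. Applying this to $s^{-1}$ as well shows $s$ restricts to a bijection of $M(K)$, hence $s(m_K)=\sum_{v\in M(K)}s(v)=m_K$, proving that $m_K$ is symmetric.

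For transitivity, take $u=\theta_{a_1}\cdots\theta_{a_m}x_{b_1}\cdots x_{b_n}$ and $u'=\theta_{a_1'}\cdots\theta_{a_m'}x_{b_1'}\cdots x_{b_n'}$ nonzero in $M(K)$, and build $s\in\Sym$ with $s(a_i)=a_i'$ and $s(b_j)=b_j'$ for all $i,j$. The rule $b_j\mapsto b_j'$ is a well--defined injection on $\{b_1,\ldots,b_n\}$ because, by \eqref{002}, $b_i=b_j$ iff $i,j$ lie in one block of $K$ iff $b_i'=b_j'$. By \eqref{010}, each $a_i$ with $K_i^+\neq\emptyset$ already equals $b_j$ (and $a_i'=b_j'$) for $j\in K_i^+$, so $s(a_i)=a_i'$ is automatic; the remaining $a_i$, those with $K_i=\{0\}$, are exactly the $\theta$--indices missing from $\{b_1,\ldots,b_n\}$, resp.\ from $\{b_1',\ldots,b_n'\}$, and they are pairwise distinct since $u,u'\neq 0$, so $a_i\mapsto a_i'$ extends the previous rule injectively and with disjoint range. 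Thus $s$ is a bijection between two finite subsets of $\N$ of equal size; extending it to a permutation of $[N]$ for $N$ larger than every index in play puts $s$ in $\Sym$ with $s(u)=u'$. As $s(u)$ is again nonzero, $\Sym\cdot u$ is exactly the set of nonzero monomials of $M(K)$, whence $m_K=\sum_{v\in M(K)}v=\sum_{\sigma\in\Sym}\sigma(u)$.

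The only real obstacle is the well--definedness of $s$ in the transitivity step: one must see that the coincidence patterns among the indices of $u$ and of $u'$ agree, which is exactly what \eqref{010} and \eqref{002} encode, the subtle case being the fermionic blocks $K_i=\{0\}$ whose $\theta$--indices are unconstrained by the $x$--part. As a sanity check one should note that if $K$ has two or more blocks $\{0\}$ those free $\theta$--indices may be permuted among themselves, so the monomials of $m_K$ cancel in pairs by anticommutativity and $m_K=0$, while $\sum_\sigma\sigma(u)=0$ as well; the content of the statement therefore lies in the case of at most one block $\{0\}$, handled above. The rest is routine checking against Definition \ref{005}.
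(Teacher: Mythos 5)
Your proof is correct and follows essentially the same route as the paper, whose entire argument is the observation that $\sigma(M(K))=M(K)$ for all $\sigma\in\Sym$; you simply verify this stability explicitly and add the transitivity argument (and the orbit-sum reading of $\sum_{\sigma\in\Sym}\sigma(u)$) that the paper leaves implicit. The extra care about the free $\theta$--indices of blocks $\{0\}$ and the cancellation when $K$ is trivial is consistent with the paper's later remark that such $m_K$ vanish.
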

\begin{proof}
It is a direct consequence of the fact that $\sigma(M(K))=M(K)$ for all $\sigma\in\Sym$.
\end{proof}

Observe that if $K\in\sC_{n,m}$ contains at least two blocks $\{0\}$, then $m_K$ involves only null--symmetric monomials, thus, by Proposition \ref{003}, $m_K=0$. Now, recall that if $K$ is nontrivial, then $K=\sigma\fcirc I$ for some $\sigma\in\Sym_m$ and $I\in\sP_{n,m}$. Thus, the monomial function $m_K$ can be characterized by applying the $\fcirc$--action of $\sigma$, that is\[\sigma\fcirc m_I\,\,\,\,=\sum_{(a_1,\ldots,a_m,b_1,\ldots,b_n)}\theta_{a_{\sigma(1)}}\cdots\theta_{a_{\sigma(m)}}x_{b_1}\cdots x_{b_n}=m_{\sigma\,\fcirc\,I}=m_K.\]

\begin{pro}\label{006}
We have
\begin{enumerate}
\item Every non null--symmetric monomial belongs to a unique $M(\sigma\fcirc I)$.\label{012}
\item The set $\{m_{\sigma\,\fcirc\,I}\mid\sigma\in\Sym_m,\,I\in\sP_{n,m}\}$ spans $\sNCSym_{n,m}$.\label{011}
\item For every $I\in\sP_{n,m}$ and $\sigma\in\Sym_m$, $m_{\sigma\,\fcirc\,I}=(-1)^{\inv(\sigma)}m_I$.\label{007}
\item The set $\{m_I\mid I\in\sP_{n,m}\}$ is a basis of $\sNCSym_{n,m}$.\label{068}
\end{enumerate}
\end{pro}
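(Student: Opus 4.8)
The plan is to prove the four parts in the stated order, since each one feeds the next. For part \eqref{012}, I would start with a non null--symmetric monomial $u=q\theta_{a_1}\cdots\theta_{a_m}x_{b_1}\cdots x_{b_n}$ and read off from it the data needed to reconstruct a partial set supercomposition: for each $i\in[m]$ put into a block $K_i$ the index $0$ together with all $j\in[n]$ for which $b_j=a_i$; for the remaining $x$--variables, group the positions $j$ by the value of $b_j$ to form the nonfermionic blocks, sorting them by least element. The hypothesis that $u$ is not null--symmetric guarantees that at most one of the $a_i$ fails to appear among the $b_j$, so at most one block $\{0\}$ is created and the resulting tuple $K$ satisfies the axioms of $\sC_{n,m}$; moreover $K$ is nontrivial, so $K=\sigma\fcirc I$ for a unique $I\in\sP_{n,m}$ and a unique $\sigma\in\Sym_m$ recording the order in which the fermionic blocks were presented. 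By construction $u\in M(K)=M(\sigma\fcirc I)$, and conditions \eqref{010} and \eqref{002} of Definition \ref{005} force this $K$ to be the only one with $u\in M(K)$, giving uniqueness.

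For part \eqref{011}, I would take an arbitrary $f\in\sNCSym_{n,m}$ and expand it in the monomial basis of $\sseries[n,m]$. By Proposition \ref{003} every monomial occurring in $f$ is non null--symmetric, so by part \eqref{012} it lies in a unique $M(\sigma\fcirc I)$. Since $f$ is symmetric and, by Proposition \ref{004}, $m_{\sigma\fcirc I}=\sum_{\tau\in\Sym}\tau(u)$ for any $u\in M(\sigma\fcirc I)$, the coefficient in $f$ is constant along each orbit $M(\sigma\fcirc I)$; collecting orbits shows $f$ is a (finite) linear combination of the $m_{\sigma\fcirc I}$. Part \eqref{007} is the identity $\sigma\fcirc u=(-1)^{\inv(\sigma)}u$ for monomials, already recorded in the text after \eqref{055}, combined with $m_{\sigma\fcirc I}=\sigma\fcirc m_I$, which is the displayed computation preceding Proposition \ref{006}: applying $\sigma$ to the $\theta$--variables of every monomial in $m_I$ multiplies each term by the same sign $(-1)^{\inv(\sigma)}$.

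For part \eqref{068}, spanning follows by combining \eqref{011} and \eqref{007}: every $m_{\sigma\fcirc I}$ is $\pm m_I$, so the $m_I$ with $I\in\sP_{n,m}$ already span $\sNCSym_{n,m}$. Linear independence is immediate because distinct $I,I'\in\sP_{n,m}$ give disjoint monomial supports: by part \eqref{012} the sets $M(I)$ and $M(I')$ are disjoint, so $m_I$ and $m_{I'}$ have no monomial in common, and a nontrivial linear relation among functions with pairwise disjoint supports is impossible.

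I expect the only real subtlety to be in part \eqref{012}, specifically the bookkeeping that turns a monomial into a well--defined element of $\sC_{n,m}$ and then factors it as $\sigma\fcirc I$ with both $\sigma$ and $I$ unique; the role of the non null--symmetric hypothesis (it is exactly what prevents two distinct $a_i$ from being ``free,'' i.e.\ what keeps the number of $\{0\}$--blocks at most one) must be used carefully. Once that is set up, parts \eqref{011}, \eqref{007}, and \eqref{068} are short consequences of Propositions \ref{003} and \ref{004} together with the orbit/disjoint--support observations above.
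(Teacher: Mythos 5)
Your proposal is correct and follows essentially the same route as the paper: reconstruct the (nontrivial) partial set supercomposition from a non null--symmetric monomial and factor it uniquely as $\sigma\fcirc I$, use Propositions \ref{003} and \ref{004} to collect orbits for the spanning claim, obtain the sign rule from $\sigma\fcirc u=(-1)^{\inv(\sigma)}u$, and deduce independence from the disjointness of the supports $M(I)$. Your explicit remark that the non null--symmetric hypothesis creates at most one $\{0\}$--block (and hence a nontrivial $K$) is a slightly more careful rendering of a step the paper leaves implicit, but it is the same argument.
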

\begin{proof}
Let $u=\theta_{a_1}\cdots\theta_{a_m}x_{b_1}\cdots x_{b_n}$ be a non null--symmetric monomial, let $I^+$ be the unique set partition of $[n]$ in which $i,j$ belong to the same block if and only if $b_i=b_j$, and let $I$ to be the unique set superpartition of $[n]$ obtained by adding $0$ to each block of $I$ containing $i$ such that $b_i\in\{a_1,\ldots,a_m\}$. As $\theta$--variables are not necessarily sorted as the blocks of $I$, then there is a unique permutation $\sigma\in\Sym_m$ such that $u\in M(\sigma\fcirc I)$.

For every $f\in\sNCSym_{n,m}$, Proposition \ref{003} and Proposition \ref{004} imply that if $qu$ occurs in $f$ for some $q\in\field$ and $u\in M(\sigma\fcirc I)$ with $(\sigma,\,I)\in\Sym_m\times\sP_{n,m}$ as in \eqref{012}, then $qm_I$ occurs in $f$ as well. As it holds for every monomial occurring in $f$ and the set $\Sym_m\times\sP_{n,m}$ is finite, then $f$ is a linear combination of monomial functions in $\{m_{\sigma\,\fcirc\,I}\mid\sigma\in\Sym_m,\,I\in\sP_{n,m}\}$. Thus \eqref{011} holds.

To show \eqref{007}, it is enough to note that, by Definition \ref{005}, $m_I$ is obtained by permuting by $\sigma^{-1}$ the $\theta$--variables of every monomial of $M(\sigma\fcirc I)$, that is, $m_I=(-1)^{\inv(\sigma)}m_{\sigma\,\fcirc\,I}$.

Finally, since $M(I)\cap M(J)$ is empty for all $I,J\in\sP_{n,m}$ with $I\neq J$, then $\{m_I\mid I\in\sP_{n,m}\}$ is linearly independent. So, as a consequence of \eqref{011} and \eqref{007}, we get that it is a basis of $\sNCSym_{n,m}$.
\end{proof}

\subsection{Product}

Let $K=(K_1,\ldots,K_h)$ and $L=(L_1,\ldots,L_k)$ be partial set supercompositions. The \emph{over product} of $K$ with $L$ is the partial set supercomposition $K\slash L=K\cup L[n]$, where $n=\deg(K)$. Observe that,\[\deg(K/L)=\deg(K)+\deg(L),\qquad\df(K/L)=\df(K)+\df(L),\qquad\ell(K/L)=\ell(K)+\ell(L).\]For instance, below the over product of two set superpartitions:\[(\{0\},\{0,4\},\{1,2,3\})\slash(\{0\},\{0,3\},\{1,2\})=(\{0\},\{0,4\},\{0\},\{0,7\},\{1,2,3\},\{5,6\}).\]We will denote by $I\shuffle J$ the collection of nontrivial partial set supercompositions obtained from $I/J$, possibly by $\oplus$--operating some pairs of blocks $I_i\oplus J_j[n]$ maintaining the order, of the fermionic blocks, inherited from $I/J$. Note that $K\sqsupseteq I/J$ for all $K\in I\shuffle J$, and $I/J\in I\shuffle J$ if and only if $\{0\}\not\in I\cap J$. For instance, if $I=(\{0\},\{0,3\},\{1,2\})$ and $J=(\{{\blue0},{\blue2}\},\{{\blue1}\})$, the collection $I\shuffle J$ is formed by the following elements.\[\begin{array}{c}
(\{0\},\{0,3\},\{{\blue0},{\blue5}\},\{1,2\},\{{\blue4}\})\\[0.2cm]
\begin{array}{c}
(\{0,{\blue4}\},\{0,3\},\{{\blue0},{\blue5}\},\{1,2\})\\
(\{0\},\{0,3,{\blue4}\},\{{\blue0},{\blue5}\},\{1,2\})\\
(\{0\},\{0,3\},\{{\blue0},1,2,{\blue5}\},\{{\blue4}\})\\
(\{0\},\{0,3\},\{{\blue0},{\blue5}\},\{1,2,{\blue4}\})\\
\end{array}\qquad\begin{array}{c}
(\{0,{\blue4}\},\{0,3\},\{{\blue0},1,2,{\blue5}\})\\
(\{0\},\{0,3,{\blue4}\},\{{\blue0},1,2,{\blue5}\})\\
(\{0,{\blue4}\},\{0,3\},\{{\blue0},1,2,{\blue5}\})\\
(\{0\},\{0,3,{\blue4}\},\{{\blue0},1,2,{\blue5}\})
\end{array}
\end{array}\]

As it was shown in \cite[Proposition 3.3]{BeReRoZa08} for $\NCSym$. In superspace, the collection of monomial functions, indexed by set superpartitions, can be described as follows.

\begin{pro}\label{023}
For every pair of set superpartitions $I,J$, we have\[m_Im_J=\sum_{K\in I\shuffle J}m_K.\]
\end{pro}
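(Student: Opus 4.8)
The plan is to compute the product $m_I m_J$ directly from Definition~\ref{005} by multiplying the defining monomials, and to show that the resulting sum reorganizes itself exactly as $\sum_{K\in I\shuffle J}m_K$. By Proposition~\ref{004} we may write $m_I=\sum_{u\in M(I)}u$ and $m_J=\sum_{v\in M(J)}v$, so that $m_Im_J=\sum_{u,v}uv$, where $u=\theta_{a_1}\cdots\theta_{a_m}x_{b_1}\cdots x_{b_n}$ with $(a,b)$ satisfying the conditions of $M(I)$ for $I\in\sP_{n,m}$, and likewise $v=\theta_{c_1}\cdots\theta_{c_{m'}}x_{d_1}\cdots x_{d_{n'}}$ for $J\in\sP_{n',m'}$. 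Using the relations defining $\sseries$ (the $x$--variables commute with everything and the $\theta$--variables anticommute past the $x$'s freely), the product $uv$ can be rewritten, up to sign, as $\pm\,\theta_{a_1}\cdots\theta_{a_m}\theta_{c_1}\cdots\theta_{c_{m'}}\,x_{b_1}\cdots x_{b_n}x_{d_1}\cdots x_{d_{n'}}$; since each $\theta$ passes an even number of $x$'s ($n$ of them) the sign is in fact $+1$, so $uv$ is again an honest monomial of the shape appearing in Definition~\ref{005}.

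The next step is to classify which monomials $w$ of $\sseries[n+n',m+m']$ arise as such a product $uv$ and with what multiplicity. First I would handle the degenerate case: if $\{0\}\in I\cap J$, i.e. both $I$ and $J$ contain a singleton fermionic block $\{0\}$, then $I/J$ contains two blocks $\{0\}$, so every product monomial has two $\theta$--variables whose indices are not forced to equal any $x$--index, hence is null--symmetric; by Proposition~\ref{003} the whole product vanishes, and correspondingly $I\shuffle J$ is empty (a nontrivial partial set supercomposition has at most one block $\{0\}$). In the generic case, given a product monomial $w$, record for the $x$--part the set partition structure it induces on $[n+n']$: this is a coarsening of $I^+/J^+=(I/J)^+$ obtained by merging some $x$--variables of $u$ with some of $v$ (none of the within-$u$ or within-$v$ blocks can merge, since within $u$ distinct blocks of $I$ give distinct values and likewise for $v$), and then for each $\theta$--index one records which $x$--block it sits in. Matching this data against Definition~\ref{005} shows precisely that $w\in M(K)$ for a unique $K$ obtained from $I/J$ by $\oplus$--operating some pairs of blocks $I_i\oplus J_j[n]$ (the $\oplus$ correctly encodes that a fermionic block of $I$ and a fermionic block of $J$ cannot be merged, since that would create a block with two zeros, matching the earlier observation) while keeping the inherited order of the fermionic blocks — that is, $K\in I\shuffle J$.

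Finally I would verify the count is exactly one: for a fixed $K\in I\shuffle J$ and a fixed monomial $w\in M(K)$, there is a unique way to split $w$ back into a valid $u\in M(I)$ followed by a valid $v\in M(J)$, because the block structure of $K$ remembers which $x$--positions and which $\theta$--positions came from $I$ and which from $J$ (the first $n$ $x$--slots and the blocks inherited from $I$ versus the last $n'$ and those inherited from $J[n]$). Conversely every $w\in M(K)$ does so decompose. Hence $\sum_{u\in M(I),\,v\in M(J)}uv=\sum_{K\in I\shuffle J}\sum_{w\in M(K)}w=\sum_{K\in I\shuffle J}m_K$, using Proposition~\ref{004} again for the inner sums. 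The main obstacle is the bookkeeping in the middle step: one must argue carefully that the merging of $x$--blocks and the placement of the $\theta$--indices, together with the constraint that fermionic blocks cannot be merged with each other and that their relative order is the one inherited from $I/J$, matches \emph{exactly} the definition of $I\shuffle J$ — neither producing spurious $K$'s (e.g. those violating the $\min$--ordering of nonfermionic blocks, which is automatic once we pass to $\bar K$, or creating empty blocks, ruled out by $\oplus$) nor missing any, and that the matching is a bijection on the level of monomials so that no coefficients other than $1$ appear.
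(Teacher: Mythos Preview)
Your overall strategy matches the paper's: expand $m_Im_J$ as a sum of products $uv$, identify the unique $K$ with $uv\in M(K)$, and check that the $K$'s arising are exactly those in $I\shuffle J$, with the monomial-level map a bijection. Two points need correction.

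First, a minor one: there is no sign to track when pushing the $\theta$'s of $v$ past the $x$'s of $u$, because in $\sseries$ one has $x_i\theta_j=\theta_jx_i$. Your parity remark (``each $\theta$ passes an even number of $x$'s'') is irrelevant; the sign is $+1$ simply because $x$ and $\theta$ commute.

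Second, and this is a genuine gap: your ``degenerate case'' analysis is wrong. If $\{0\}\in I$ and $\{0\}\in J$, it is \emph{not} true that every product monomial $uv$ is null--symmetric, nor that $I\shuffle J=\emptyset$, nor that $m_Im_J=0$. For instance take $I=J=(\{0\},\{1\})$: then $I\shuffle J$ contains $(\{0,2\},\{0,1\})$, $(\{0,2\},\{0\},\{1\})$, and $(\{0\},\{0,1\},\{2\})$, obtained by merging one (or both) of the blocks $\{0\}$ with the nonfermionic block from the other factor; and one checks directly that $m_Im_J\neq0$. The point you missed is that the ``free'' $\theta$--index coming from the block $\{0\}$ of $I$ may happen to coincide with an $x$--index of $v$, which is exactly the merge $I_i\oplus J_j[n]$ with $I_i=\{0\}$.

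The fix is simply to drop the separate degenerate case and let the generic argument absorb it. For any nonzero $uv$ there is a unique (possibly \emph{trivial}) partial set supercomposition $K$ obtained from $I/J$ by merging pairs $I_i\oplus J_j[n]$ with $uv\in M(K)$, and the bijection between such $(u,v)$ and $M(K)$ works uniformly. The only products $uv$ not captured by some $K\in I\shuffle J$ are those landing in a trivial $K$ (two blocks $\{0\}$), for which $m_K=0$; equivalently, these are the null--symmetric monomials, which contribute zero because $m_Im_J$ is symmetric (Proposition~\ref{003}). This is how the paper handles it as well.
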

\begin{proof}
Let $(n,m)$ and $(p,q)$ be the bidegrees of $I$ and $J$ respectively. As $x$--variables commute with $\theta$--variables, we have\[\begin{array}{rcl}
m_Im_J&=&{\displaystyle\left(\sum_{u\in M(I)}\underbrace{\theta_{a_1}\cdots\theta_{a_m}x_{b_1}\cdots x_{b_n}}_u\right)\!\left(\sum_{v\in M(J)}\underbrace{\theta_{c_1}\cdots\theta_{c_q}x_{d_1}\cdots x_{d_p}}_v\right)},\\[0.8cm]
&=&{\displaystyle\sum_{uv\in M(I)M(J)}\underbrace{\theta_{a_1}\cdots\theta_{a_m}\theta_{c_1}\cdots\theta_{c_q}x_{b_1}\cdots x_{b_n}x_{d_1}\cdots x_{d_p}}_{uv}}.\end{array}\]Observe that, in the sum above, the coefficient of $uv\in M(I)M(J)$ is zero, if and only if, either $a_i=c_j$ for some $i,j$, or there are $i,j$ such that $a_i\neq b_k$ for all $k\in[n]$ and $c_j\neq d_k$ for all $k\in[p]$.

Assume that the coefficient of $uv$ is nonzero, and let $K$ be the partial set supercomposition such that $uv\in M(K)$. If $\{a_1,\ldots,a_m,b_1,\ldots,b_n\}\cap\{c_1,\ldots,c_q,d_1,\ldots,d_p\}$ is empty, then $K=I/J$. Otherwise, we have $a_i=d_j$ or $b_i=c_j$ or $b_i=d_j$ for some $i,j$. Then, the block of $K$ containing $i$ and $j+n$ is given by $A\cup B[n]$, where $A$ is the block of $I$ containing $i$, and $B$ is the block of $J$ containing $j$. As the coefficient of $uv$ is nonzero, then $A,B$ are not fermionic at once. Therefore $K\in I\shuffle J$.

Conversely, by definition each $K\in I\shuffle J$ maintains the order of the fermionic blocks of $I/J$. This implies that every $w\in M(K)$ can be written as a product $w=uv$, where $u\in M(I)$ and $v\in M(J)$.
\end{proof}

\section{Other bases}\label{047}

In this section, we introduce power sum, elementary and complete homogeneous symmetric functions in superspace, which constitute bases of $\NCSym$. Also, we give explicit relations between these bases.

\subsection{Power sum functions in superspace}

Here we introduce the basis of power sum functions in superspace.

\begin{dfn}\label{008}
The \emph{power sum function} of a partial set supercomposition $K=(K_1,\ldots,K_q)$ of bidegree $(n,m)$ is the function in superspace, defined as follows:\[p_K\,\,\,\,=\sum_{(a_1,\ldots,a_m,b_1,\ldots,b_n)}\theta_{a_1}\cdots\theta_{a_m}x_{b_1}\cdots x_{b_n},\]where\begin{enumerate}
\item For each $i\in[m]$, $a_i=b_j$ if $j\in K_i^+$.\label{009}
\item For each $i,j\in[n]$, $b_i=b_j$ if $i,j$ belong to the same block of $K$.\label{013}
\end{enumerate}
We will denote by $P(K)$ the set of all monomials satisfying conditions \eqref{009} and \eqref{013} above.
\end{dfn}
For instance, for the partial set supercomposition $K=(\{0,2\},\{0,1,3\},\{4\})$ in $\sC_{4,2}$, we have
\[p_K=\theta_2\theta_1x_1x_2x_1^2+\theta_1\theta_2x_2x_1x_2x_1+\theta_2\theta_1x_1x_2x_1x_2+\theta_1\theta_2x_2x_1x_2^2+\theta_3\theta_2x_2x_3x_2^2+\theta_2\theta_3x_3x_2x_3x_2+\cdots\]

\begin{pro}\label{030}
For every nontrivial partial set supercomposition $K$ of bidegree $(n,m)$, we have\[p_K=\sum_{L\,\sqsupseteq\,K}m_L.\]
\end{pro}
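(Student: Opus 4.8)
The plan is to compare the monomial content of $p_K$ with that of the $m_L$'s directly, using the fact that the only difference between Definition~\ref{008} and Definition~\ref{005} is the replacement of ``if and only if'' by ``if''. First I would observe that a monomial $u=\theta_{a_1}\cdots\theta_{a_m}x_{b_1}\cdots x_{b_n}$ lies in $P(K)$ precisely when the equalities forced by the blocks of $K$ hold, but additional coincidences among the $b_j$ (and hence among the $a_i$, through condition~\eqref{009}) are permitted. Any such $u$ determines a unique partial set supercomposition $L$ with $u\in M(L)$: namely, let $L^+$ be the set partition of $[n]$ whose blocks are the fibers of $j\mapsto b_j$, and put $0$ into the block containing $j$ exactly when $b_j\in\{a_1,\dots,a_m\}$, sorting the fermionic blocks according to the order in which the $a_i$ appear. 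By Proposition~\ref{006}\eqref{012}, since $K$ is nontrivial the monomials of $P(K)$ are all non null--symmetric, so this $L$ is well-defined; one then checks $L\in\sC_{n,m}$.

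The heart of the argument is the claim that the partial set supercompositions $L$ arising this way are exactly those with $L\sqsupseteq K$, and that for each such $L$ every monomial of $M(L)$ occurs in $P(K)$ (so that $M(L)\subseteq P(K)$). For the forward inclusion: if $u\in M(L)$ and $u\in P(K)$, I would verify the two conditions of the strongly coarsening relation $\sqsubseteq$ from Section~2. Two indices $i,j\in[n]$ in the same nonfermionic block of $K$ force $b_i=b_j$, hence lie in the same block of $L$, giving condition~(2) (each nonfermionic block of $L$ is a union of nonfermionic blocks of $K$); and the $k$th fermionic block of $K$ forces its nonzero elements to share a common value $a_k$, which value may also be hit by other $b_j$'s, so the $k$th fermionic block of $L$ contains the $k$th fermionic block of $K$ together with possibly some nonfermionic blocks of $K$ — this is condition~(1), once one checks the ordering of fermionic blocks is preserved because the $\theta$-variables appear in the order dictated by $K$ and $L$ inherits that order. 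Conversely, if $L\sqsupseteq K$ then any $u\in M(L)$ automatically satisfies the ``if'' conditions \eqref{009} and \eqref{013} defining $P(K)$, since those only demand equalities on pairs already equal in $L$; hence $M(L)\subseteq P(K)$.

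Having established that $P(K)=\bigsqcup_{L\sqsupseteq K}M(L)$ as a disjoint union (disjointness because distinct set superpartitions — and here distinct elements $L$ of $\sC_{n,m}$ that each equal some $M(L)$'s index — have disjoint monomial sets, cf.\ the proof of Proposition~\ref{006}\eqref{068}), the identity $p_K=\sum_{L\sqsupseteq K}m_L$ follows by splitting the defining sum of $p_K$ according to which $M(L)$ each monomial belongs to, since all coefficients on both sides are $1$. One subtlety to handle carefully: the sum $\sum_{L\sqsupseteq K}m_L$ ranges over \emph{all} $L\in\sC_{n,m}$ with $L\sqsupseteq K$, including non-nontrivial ones (those with several blocks $\{0\}$), but such $L$ have $m_L=0$ by the remark following Proposition~\ref{004}; correspondingly the monomials of $M(L)$ for such $L$ are null--symmetric and indeed never occur in $P(K)$ when $K$ is nontrivial, so these vanishing terms are harmless and consistent. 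I expect the main obstacle to be the bookkeeping of the fermionic-block ordering — making precise that the permutation-of-$\theta$'s data is the same for $P(K)$, for the $L$'s, and that ``maintaining order'' in the $\sqsubseteq$ relation matches how $L$ is read off from a monomial — rather than anything deep; the combinatorics of equalities-versus-coincidences is routine once the non null--symmetric hypothesis (nontriviality of $K$) is in place.
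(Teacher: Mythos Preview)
Your proposal is correct and follows essentially the same approach as the paper's proof: both establish $P(K)=\bigcup_{L\sqsupseteq K}M(L)$ by verifying the two inclusions directly from Definitions~\ref{005} and~\ref{008}, invoking Proposition~\ref{006}\eqref{012} to associate a unique nontrivial $L$ to each non null--symmetric monomial. One small remark: your final ``subtlety'' about non-nontrivial $L$ appearing in the sum is in fact a non-issue, since if $K$ is nontrivial then every $L\sqsupseteq K$ is automatically nontrivial (the $i$th fermionic block of $L$ contains $K_i$, and at most one $K_i$ equals $\{0\}$), so no such terms occur.
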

\begin{proof}
Let $K\in\sC_{n,m}$. We must show that $P(K)$ is the union of $M(L)$ such that $L\sqsupseteq K$.

Let $u=\theta_{a_1}\cdots\theta_{a_m}x_{b_1}\cdots x_{b_n}\in P(K)$. Since $K$ has at most one trivial fermionic block, then $u$ is non null--symmetric. So, by Proposition \ref{006}\eqref{012}, there is a unique nontrivial $L\in\sC_{n,m}$ such that $u\in M(L)$. By Definition \ref{008}\eqref{009} and Definition \ref{005}\eqref{010}, for each $i\in[m]$, we have $a_i=b_j$ for all $j\in K_i^+$. Thus $K_i\subseteq L_i$. On the other hand, Definition \ref{008}\eqref{011} and Definition \ref{005}\eqref{011} imply that each nonfermionic block of $K$ is contained in one of $L$. Therefore $L\sqsupseteq K$.

Conversely, let $L\in\sC_{n,m}$ such that $L\sqsupseteq K$, and let $u=\theta_{a_1}\cdots\theta_{a_m}x_{b_1}\cdots x_{b_n}\in M(L)$. By definition, for each $i\in[m]$, we have $K_i\subseteq L_i$ and $a_i=b_j$ for all $j\in L_i^+$. Thus $a_i=b_j$ for all $j\in K_i^+$. Further, if $b_i,b_j$ belong to the same block of $K$ for some $i,j\in[n]$, then $b_i,b_j$ belong to the same block of $L$ because $L\sqsupseteq K$. These facts imply that $u$ satisfies conditions \eqref{009} and \eqref{013} of Definition \ref{008}. Therefore $u\in P(K)$.
\end{proof}

\begin{rem}\label{041}
As $I\preceq J$ for some $I,J\in\sP_{n,m}$ implies that each fermionic block of $J$ contains a unique fermionic block of $I$, we may consider $\perm{I,\,J}$ to be the permutation given by these contentions, that is, $I_i\subseteq J_{\perm{I,\,J}(i)}$ for all $i\in[m]$. The number of inversions of $\perm{I,\,J}$ will be denoted by $\inv(I,J)$. For instance,\[I=(\stackrel{1}{\{0\}},\stackrel{2}{\{0,3\}},\stackrel{3}{\{0,5\}},\{1\},\{2,4\}),\quad J=(\stackrel{1}{\{{\red0},1,{\red5}\}},\stackrel{2}{\{{\red0},2,4\}},\stackrel{3}{\{{\red0},{\red3}\}}),\quad\perm{I,\,J}=231,\quad\inv(I,J)=2.\]Observe that $I\preceq J$ for some $I,J\in\sP_{n,m}$ do not imply that $I\sqsubseteq J$, however $\sigma\fcirc I\sqsubseteq J$ and $I\sqsubseteq \sigma^{-1}\fcirc J$, where $\sigma=\sigma_{_{(I,J)}}$. Moreover if $I\sqsubseteq K\sqsubseteq\sigma^{-1}\fcirc J$ for some $K\in\sC_{n,m}$, then $\sigma\fcirc I\sqsubseteq\sigma\fcirc K\sqsubseteq J$. Similarly if $\sigma\fcirc I\sqsubseteq K\subseteq J$, then $I\sqsubseteq\sigma^{-1}\fcirc K\sqsubseteq\sigma^{-1}\fcirc J$. For instance, $I=(\{0\},\{0,1\},\{2\})$ is not strongly finer than $J=(\{0,1\},\{0,2\})$, see Figure \ref{025}, however, $21\fcirc I\sqsubseteq J$ and $I\sqsubseteq21\fcirc J$.
\end{rem}

Due to Proposition \ref{030} and Remark \ref{041} we obtain the following consequence.
\begin{crl}\label{014}
For every set superpartition $I$ of bidegree $(n,m)$, we have\[p_I=\sum_{J\,\succeq I}(-1)^{\inv(I,\,J)}m_J.\]
\end{crl}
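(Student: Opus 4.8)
The plan is to combine Proposition~\ref{030} with the observations collected in Remark~\ref{041}, converting the sum over $L\sqsupseteq K$ (for $K$ a partial set supercomposition) into a sum over $J\succeq I$ (for $J$ a set superpartition), and then applying M\"obius inversion on the appropriate interval. First I would start from Proposition~\ref{030} applied to $K=I$, which gives $p_I=\sum_{L\sqsupseteq I}m_L$. Each $L$ in this sum is a nontrivial partial set supercomposition, hence $L=\tau\fcirc\bar L$ for a unique $\tau\in\Sym_m$ and $\bar L\in\sP_{n,m}$, and by Proposition~\ref{006}\eqref{007} we have $m_L=(-1)^{\inv(\tau)}m_{\bar L}$. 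So the task reduces to understanding, for each fixed $J\in\sP_{n,m}$ with $J\succeq I$, the total contribution $\sum m_L$ over those $L\sqsupseteq I$ with $\bar L=J$, together with the signs.

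The key combinatorial step is to identify which $L\sqsupseteq I$ satisfy $\bar L=J$. Using Remark~\ref{041}: if $J\succeq I$ then the fermionic blocks of $I$ are distributed injectively among the fermionic blocks of $J$ via the permutation $\sigma=\perm{I,J}$, and although $I\sqsubseteq J$ need not hold, we do have $\sigma\fcirc I\sqsubseteq J$. More generally, the $L$ with $L\sqsupseteq I$ and $\bar L=J$ are precisely the $\tau\fcirc J$ for those $\tau\in\Sym_m$ such that $\tau\fcirc J\sqsupseteq I$; by the last assertions of Remark~\ref{041} (the statements that strongly coarsening is preserved under simultaneous $\fcirc$-shifts, e.g.\ $I\sqsubseteq K\sqsubseteq\sigma^{-1}\fcirc J\iff\sigma\fcirc I\sqsubseteq\sigma\fcirc K\sqsubseteq J$), one checks that $\tau\fcirc J\sqsupseteq I$ forces $\tau$ to be compatible with $\sigma=\perm{I,J}$ in a way that pins down the sign. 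I expect to show that among all such $L$ with $\bar L=J$, the contributions telescope: each interval $[\text{something},J]$ in the strongly-coarsening poset is, by Proposition~\ref{027}, a product of partition lattices $\prod_i\P_{|J_i|}$, whose M\"obius-type alternating sum of all $m_L$ collapses — but here the cleaner route is simply that for a \emph{fixed} $J$, exactly one nontrivial $L$ with $\bar L=J$ lies above $I$, namely $L=\sigma\fcirc I$ coarsened appropriately; hmm, more carefully: the set $\{L\sqsupseteq I : \bar L = J\}$ should have a unique element, and that element $L$ satisfies $m_L=(-1)^{\inv(I,J)}m_J$.

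So the argument I would write runs: from Proposition~\ref{030}, $p_I=\sum_{L\sqsupseteq I}m_L$; partition this sum according to $\bar L\in\sP_{n,m}$; show $\bar L=J$ can occur for $L\sqsupseteq I$ only when $J\succeq I$ (since $K\sqsubseteq L\Rightarrow\bar K\preceq\bar L$, applied with $K=I$, $L$ arbitrary above $I$, using that $\bar I=I$); for each such $J$ there is a \emph{unique} $L\sqsupseteq I$ with $\bar L=J$, obtained by $\oplus$-merging blocks of $I$ exactly as prescribed by $J$ and then resorting the fermionic blocks into the order of $J$ — this resorting permutation is $\perm{I,J}$, so $m_L=(-1)^{\inv(I,J)}m_J$ by Proposition~\ref{006}\eqref{007}; summing yields $p_I=\sum_{J\succeq I}(-1)^{\inv(I,J)}m_J$.

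The main obstacle will be the uniqueness claim and the precise bookkeeping of the sign: I need to verify carefully that for fixed $J\succeq I$ there is exactly one nontrivial $L$ (up to nothing — genuinely one) with $I\sqsubseteq L$ and $\bar L = J$, ruling out spurious $L$'s that have the same underlying merging but a different fermionic-block ordering yet still happen to be $\sqsupseteq I$; this is exactly where the two ``moreover'' implications of Remark~\ref{041} do the work, forcing the ordering of the fermionic blocks of any such $L$ to agree with $J$'s, hence $L$ is determined and the sign is $(-1)^{\inv(\perm{I,J})}=(-1)^{\inv(I,J)}$. Once that is pinned down, the corollary follows immediately.
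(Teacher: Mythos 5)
Your argument is correct and follows essentially the same route as the paper, which obtains the corollary directly from Proposition~\ref{030} and Remark~\ref{041}: you expand $p_I=\sum_{L\sqsupseteq I}m_L$, group the nontrivial $L$ by $\bar L=J\succeq I$, and use the forced resorting permutation $\perm{I,J}$ to get the unique $L$ above $I$ with $\bar L=J$ and the sign $(-1)^{\inv(I,J)}$ via Proposition~\ref{006}\eqref{007}. The details you supply (uniqueness of $L$ and of the permutation) are exactly what the paper leaves implicit, so no gap remains.
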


\subsection{Elementary functions in superspace}

Here we introduce the basis of elementary functions in superspace.

\begin{dfn}\label{016}
The \emph{elementary function} of a partial set supercomposition $K=(K_1,\ldots,K_q)$ of bidegree $(n,m)$ is the function in superspace, defined as follows:\[e_K\,\,\,\,=\sum_{(a_1,\ldots,a_m,b_1,\ldots,b_n)}\theta_{a_1}\cdots\theta_{a_m}x_{b_1}\cdots x_{b_n},\]where\begin{enumerate}
\item For each $i\in[m]$, $a_i\neq b_j$ if $j\in K_i^+$.\label{017}
\item For each $i,j\in[n]$, $b_i\neq b_j$ if $b_i,b_j$ belong to the same block of $K$.\label{018}
\end{enumerate}
We will denote by $E(K)$ the set of all monomials satisfying conditions \eqref{017} and \eqref{018} above.
\end{dfn}

For instance, for the set superpartition $I=(\{0\},\{0,2\},\{1,3\})$ in $\sP_{3,2}$, we have
\begin{gather}\label{073}e_I=\theta_1\theta_2x_1x_1x_2+\theta_2\theta_1x_2x_2x_1+\theta_2\theta_1x_1x_2x_2+\theta_1\theta_2x_2x_1x_1+\theta_2\theta_3x_1x_1x_2+\theta_3\theta_2x_1x_1x_2+\cdots\end{gather}Observe that due to the anticommuting of the fermionic variables, some monomials will cancel out in the sum.

\begin{pro}\label{043}
For every $K=(K_1,\ldots,K_q)\in\sC_{n,m}$, we have\[e_K\,\,\,=\!\sum_{K\,\sqcap\,L\,=\,\hat{0}_{n,m}}\!\!m_L.\]
\end{pro}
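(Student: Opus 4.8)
The plan is to identify the monomial set $E(K)$ with a disjoint union of monomial sets $M(L)$, running over those $L\in\sC_{n,m}$ that are "complementary" to $K$ in the sense that $K\sqcap L=\hat{0}_{n,m}$. Since every monomial appearing in $e_K$ is non null--symmetric (as $K$ is nontrivial once one discards the $m_L=0$ terms, but more directly: any monomial in $E(K)$ has its $\theta$-indices forced to differ from the $x$-indices in the prescribed positions, never forcing two $\theta$-indices apart from the $x$-block structure), Proposition \ref{006}\eqref{012} tells us each monomial $u\in E(K)$ lies in a unique $M(L)$ with $L$ nontrivial. So the content of the proposition is the set equality
\[
E(K)=\bigsqcup_{\substack{L\in\sC_{n,m}\\ K\sqcap L=\hat{0}_{n,m}}}M(L),
\]
after which summing and applying Definition \ref{005} gives the result. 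The disjointness on the right is immediate from $M(L)\cap M(L')=\emptyset$ for $L\neq L'$, already used in the proof of Proposition \ref{006}\eqref{068}.

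First I would unpack what $L$ is, for a given $u=\theta_{a_1}\cdots\theta_{a_m}x_{b_1}\cdots x_{b_n}\in E(K)$: by Proposition \ref{006}\eqref{012}, $b_i=b_j$ iff $i,j$ lie in the same block of $L^+$, and $0$ is adjoined to the block of $L$ containing $i$ exactly when $b_i\in\{a_1,\ldots,a_m\}$. Then I would verify the two sides of the claimed equality separately. For the inclusion $E(K)\subseteq\bigsqcup M(L)$ with $K\sqcap L=\hat 0_{n,m}$: Definition \ref{016}\eqref{018} says distinct indices $i,j$ in a common block of $K$ get distinct values $b_i\neq b_j$, so they cannot lie in a common block of $L^+$; hence no nonfermionic block of $K\sqcap L$ can be obtained by merging two nonfermionic blocks. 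Definition \ref{016}\eqref{017} says that for each fermionic block $K_i$ and each $j\in K_i^+$ we have $a_i\neq b_j$, which forces $j\notin(K\sqcap L)_i$ via Proposition \ref{022} — the $i$th fermionic block of $K\sqcap L$ is $K_i\cap L_i$, and $j\in L_i$ would mean $b_j\in\{a_1,\ldots,a_m\}$ with, in fact, $b_j=a_i$, contradiction. Putting these together with Proposition \ref{022}, every block of $K\sqcap L$ is a singleton (either $\{0\}$ in the first $m$ coordinates, or $\{\text{a single element of }[n]\}$ thereafter), i.e. $K\sqcap L=\hat 0_{n,m}$.

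Conversely, given $L$ with $K\sqcap L=\hat 0_{n,m}$ and $u\in M(L)$, I would show $u\in E(K)$, again reading Proposition \ref{022} backwards: the $i$th fermionic block of $K\sqcap L$ being $\{0\}$ means $K_i\cap L_i=\{0\}$, so no $j\in K_i^+$ lies in $L_i$, hence $a_i\neq b_j$ for all such $j$, which is Definition \ref{016}\eqref{017}; and the nonfermionic part of $K\sqcap L$ being all singletons means $(K^+\backslash A)\wedge(L^+\backslash A)$ has only singleton blocks (with $A$ the fermionic part), which forces any two distinct $i,j$ in a common block of $K$ to lie in different blocks of $L^+$, giving $b_i\neq b_j$, i.e. Definition \ref{016}\eqref{018}. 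A small point to handle carefully: the meet $\wedge$ in Proposition \ref{022} is taken after deleting $A=N_1\cup\cdots\cup N_m$ from both $K^+$ and $L^+$, so I must check that the elements absorbed into fermionic blocks do not spuriously create or destroy the singleton condition — this is where I expect the main bookkeeping obstacle, keeping straight the interaction between "$b_i$ equals some $a_k$" (membership in a fermionic block) and the nonfermionic meet. Once that is pinned down, the equality of sets is complete and the proposition follows by linearity from Definition \ref{005}.
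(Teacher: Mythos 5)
Your proposal is correct and takes essentially the same route as the paper: both prove the set identity $E(K)=\bigcup_{K\sqcap L=\hat{0}_{n,m}}M(L)$ by a two-sided inclusion, using Proposition \ref{022} to unpack $K\sqcap L$ and playing Definition \ref{005} against Definition \ref{016}, with Proposition \ref{006}\eqref{012} locating each monomial of $E(K)$ in a unique $M(L)$. The bookkeeping obstacle you flag is in fact immediate: once the fermionic blocks of $K\sqcap L$ are shown to be $\{0\}$, the set $A=N_1\cup\cdots\cup N_m$ equals $\{0\}$, so $K^+\backslash A=K^+$ and $L^+\backslash A=L^+$, and the nonfermionic part of the meet is simply $K^+\wedge L^+$ with no positive elements absorbed.
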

\begin{proof}
Let $L=(L_1,\ldots,L_r)\in\sC_{n,m}$ satisfying $K\sqcap L=\hat{0}_{n,m}$. Proposition \ref{022} implies that $K_i\cap L_i=\{0\}$ if $i\in[m]$ and $K^+\wedge L^+=\hat{0}$. Let $u=\theta_{a_1}\cdots\theta_{a_m}x_{b_1}\cdots x_{b_n}$ be a monomial in $M(L)$. If $a_i=b_j$ for some $i\in[m]$ and $j\in K_i^+$, Definition \ref{005}\eqref{010} implies that $j\in L_i$, which is impossible because $K_i\cap L_i=\{0\}$. Thus $a_i\neq b_j$. Now, if $b_i=b_j$ for some $i,j$ belonging to a block of $K$, Definition \ref{005}\eqref{002} implies that $i,j$ belong to the same block of $L$, which contradicts the fact that $K^+\wedge L^+=\hat{0}$. Thus $b_i\neq b_j$. Therefore $u\in E(K)$.

Conversely, consider $u=\theta_{a_1}\cdots\theta_{a_m}x_{b_1}\cdots x_{b_n}$ be a monomial in $E(K)$. Since $K$ has at most one trivial fermionic block, then $u$ is non null--symmetric. So, by Proposition \ref{006}\eqref{012}, there is a unique $L\in\sC_{n,m}$ such that $u\in M(L)$. Let $j\in K_i\cap L_i$ for some $i\in[m]$. If $j>0$, Definition \ref{005}\eqref{010} implies that $a_i=b_j$ which contradicts Definition \ref{016}\eqref{017}. Thus $j=0$. Now, if $i,j$ belong to the same block of $K^+\wedge L^+$, Definition \ref{005}\eqref{002} implies that $b_i=b_j$ which contradicts Definition \ref{016}\eqref{018}. Therefore $K^+\wedge L^+=\hat{0}$.
\end{proof}
As a consequence, by using Proposition \ref{006}\eqref{007}, we have the following characterization in terms of set superpartitions.
\begin{crl}
For every set superpartition $I=(I_1,\ldots,I_k)$ with $\df(I)=m$, we have\[e_I=\sum_{(\sigma,\,J)}(-1)^{\inv(\sigma)}m_J,\]where $J=(J_1,\ldots,J_q)$ with $\df(J)=m$ satisfying $I^+\wedge J^+=\hat{0}$ and $I_i\cap J_{\sigma(i)}=\{0\}$ for all $i\in[m]$.
\end{crl}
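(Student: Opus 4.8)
The plan is to derive this corollary from Proposition \ref{043} by translating the condition $K \sqcap L = \hat{0}_{n,m}$ into a statement about set superpartitions, using Proposition \ref{006}\eqref{007} to rewrite every $m_L$ with $L$ nontrivial in terms of the monomial basis. First I would observe that if $L \in \sC_{n,m}$ satisfies $I \sqcap L = \hat{0}_{n,m}$, then $L$ must be nontrivial: if $L$ contained two blocks $\{0\}$, then $m_L = 0$ by Proposition \ref{003}, so such terms contribute nothing and may be discarded from the sum in Proposition \ref{043}. For each remaining nontrivial $L$ there are, by Proposition \ref{006}\eqref{012}, a unique $J \in \sP_{n,m}$ and a unique $\sigma \in \Sym_m$ with $L = \sigma \fcirc J$, and Proposition \ref{006}\eqref{007} gives $m_L = (-1)^{\inv(\sigma)} m_J$.

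The key step is to check that the map $L \mapsto (\sigma, J)$ is a bijection between $\{L \in \sC_{n,m} : I \sqcap L = \hat{0}_{n,m},\ L \text{ nontrivial}\}$ and the index set $\{(\sigma, J) : \sigma \in \Sym_m,\ J \in \sP_{n,m},\ I^+ \wedge J^+ = \hat{0},\ I_i \cap J_{\sigma(i)} = \{0\} \text{ for all } i \in [m]\}$. By Proposition \ref{022}, $I \sqcap L = \hat{0}_{n,m}$ is equivalent to $I_i \cap L_i = \{0\}$ for all $i \in [m]$ together with $I^+ \wedge L^+ = \hat{0}$. Writing $L = \sigma \fcirc J$, the $i$th fermionic block of $L$ is $J_{\sigma(i)}$, so the first condition becomes $I_i \cap J_{\sigma(i)} = \{0\}$; since the $\fcirc$-action only permutes fermionic blocks, $L^+ = J^+$ and the second condition becomes $I^+ \wedge J^+ = \hat{0}$. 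Conversely, given such a pair $(\sigma, J)$, setting $L = \sigma \fcirc J$ recovers a nontrivial $L$ with $I \sqcap L = \hat{0}_{n,m}$, and the decomposition $L = \sigma \fcirc J$ is the unique one from Proposition \ref{006}\eqref{012}. Substituting $m_L = (-1)^{\inv(\sigma)} m_J$ into Proposition \ref{043} and reindexing the sum by $(\sigma, J)$ then yields the claimed formula.

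I expect the main obstacle to be purely bookkeeping: one must be careful that $J$ is genuinely a set superpartition (not merely nontrivial) and that the correspondence is exactly one-to-one, i.e. distinct $L$'s give distinct pairs and every admissible pair arises. This is precisely what the uniqueness clause in Proposition \ref{006}\eqref{012} guarantees — each non null-symmetric monomial lies in a unique $M(\sigma \fcirc I)$ — so the argument goes through once that proposition is invoked correctly. No new ideas beyond Propositions \ref{043}, \ref{022}, and \ref{006} are needed.
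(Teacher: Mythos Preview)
Your proposal is correct and follows essentially the same route as the paper: the paper states this corollary immediately after Proposition~\ref{043} with only the remark ``As a consequence, by using Proposition~\ref{006}\eqref{007}'', and your argument supplies exactly the details behind that sentence, invoking Proposition~\ref{022} to unpack the condition $I\sqcap L=\hat{0}_{n,m}$ and Proposition~\ref{006}\eqref{007} to pass from nontrivial $L$ to pairs $(\sigma,J)$. One small remark: the uniqueness of the decomposition $L=\sigma\fcirc J$ is not literally the content of Proposition~\ref{006}\eqref{012} (which concerns monomials), but it follows directly from the definition of set superpartition, since the fermionic blocks of $J$ are distinct and canonically ordered.
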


For instance, for the set superpartition in \eqref{073} we have
\[\begin{array}{rcl}e_I&=&m_{_{(\{0,1,2\},\{0,3\})}}-m_{_{(\{0\},\{0,1,2\},\{3\})}}-m_{_{(\{0,1\},\{0,2,3\})}}-m_{_{(\{0\},\{0,2,3\},\{1\})}}\\&&\textcolor{white}{.}-m_{_{(\{0\},\{0,2\},\{1\},\{3\})}}-m_{_{(\{0,1\},\{0,2\},\{3\})}}+m_{_{(\{0,2\},\{0,3\},\{1\})}}.\end{array}\]

\subsection{Complete homogeneous functions in superspace}\label{034}

Here we introduce the basis of complete homogeneous functions in superspace.

\begin{dfn}\label{019}
The \emph{complete homogeneous function} of a partial set supercomposition $K=(K_1,\ldots,K_q)$ of bidegree $(n,m)$ is the function in superspace, defined as follows:\[h_K\,\,\,\,=\sum_uc_uu\]where every $u$ is a non null-symmetric monomial with $m$ $\theta$--variables and $n$ $x$--variables, and $c_u$ is the number of ways to sort the elements of the blocks of $K\sqcap L$ simultaneously, with $L$ the unique partial set supercomposition of bidegree $(n,m)$ satisfying $u\in M(L)$.

We will denote by $H(I)$ the set of all monomials satisfying the conditions of Definition \ref{019}.
\end{dfn}

For instance, for the set superpartition $I=(\{0\},\{0,2\},\{1,3\})$ in $\sP_{3,2}$, we have
\begin{gather}\label{074}
h_I=2\theta_1\theta_2x_2x_2x_2+2\theta_2\theta_1x_1x_1x_1+\theta_2\theta_1x_1x_1x_2+\theta_1\theta_2x_2x_2x_1+2\theta_1\theta_2x_1x_2x_1+2\theta_2\theta_1x_2x_1x_2+\cdots
\end{gather}

For every $A=(A_1,\ldots,A_k)$, with $A_1,\ldots,A_k$ subsets of $[n]_0$, we define $A!=|A_1|!\cdots|A_k|!$.
\begin{pro}\label{065}
For every partial set supercomposition $K$, we have\[h_K=\sum_L(K\sqcap L)!m_L,\]where $L$ is a partial set supercomposition with the same bidegree of $K$.
\end{pro}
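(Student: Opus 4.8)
The plan is to show that $H(K)$, the set of monomials appearing in $h_K$, coincides with $\bigcup_L M(L)$ over all partial set supercompositions $L$ of the same bidegree as $K$ with $L$ nontrivial (non null--symmetric), and that for a monomial $u \in M(L)$ the coefficient $c_u$ prescribed in Definition \ref{019} equals $(K\sqcap L)!$. Once this is established, grouping the monomials of $h_K$ by the unique $M(L)$ to which each belongs (using Proposition \ref{006}\eqref{012}) immediately yields $h_K = \sum_L (K\sqcap L)!\, m_L$, since every monomial of a fixed $M(L)$ receives the same coefficient $c_u = (K\sqcap L)!$.

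First I would fix a non null--symmetric monomial $u$ with $m$ $\theta$--variables and $n$ $x$--variables, and let $L$ be the unique partial set supercomposition of bidegree $(n,m)$ with $u \in M(L)$, as guaranteed by Proposition \ref{006}\eqref{012}. By Definition \ref{019}, $c_u$ is the number of ways to sort the elements of the blocks of $K\sqcap L$ simultaneously. Writing $K\sqcap L = (N_1,\ldots,N_q)$ via the explicit description in Proposition \ref{022} (so $N_i = K_i \cap L_i$ for $i \in [m]$ and the remaining $N_j$ form $(K^+\backslash A)\wedge(L^+\backslash A)$), a ``simultaneous sorting'' of the blocks is a choice, independently for each block $N_j$, of a linear ordering of its elements — equivalently, an element of the product of symmetric groups $\prod_{j} \Sym_{|N_j|}$. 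Hence the count is exactly $\prod_j |N_j|! = (K\sqcap L)!$ by the notational convention $A! = |A_1|!\cdots|A_k|!$. This identifies the coefficient.

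Next I would check that the indexing in the sum is correct: a monomial $u$ contributes to $h_K$ precisely when it is non null--symmetric (these are the only monomials allowed in Definition \ref{019}), and every non null--symmetric monomial lies in $M(L)$ for a unique such $L$, so summing over all $L$ of bidegree $(n,m)$ and weighting $m_L$ by $(K\sqcap L)!$ accounts for each monomial exactly once with the right coefficient. One should note that if $L$ is trivial (contains two or more blocks $\{0\}$), then $M(L)$ consists only of null--symmetric monomials, none of which appear in $h_K$; consistently, such $L$ can simply be excluded, or one observes $(K\sqcap L)!\,m_L$ contributes $0$ since $m_L = 0$ in that case by the remark following Proposition \ref{004}. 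Either way the stated formula holds verbatim.

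The main obstacle is the bookkeeping in the coefficient identification — being careful that ``the number of ways to sort the elements of the blocks of $K\sqcap L$ simultaneously'' genuinely means an independent linear order on each block (so that the count factors as $\prod_j |N_j|!$) rather than something entangled across blocks, and that this count depends only on $L$ (through $K\sqcap L$) and not on the particular monomial $u \in M(L)$. This last point is immediate once one sees that $K\sqcap L$ is determined by $L$ (and the fixed $K$), which in turn follows from $L$ being determined by $u$ via Proposition \ref{006}\eqref{012}. With that in hand, the grouping of monomials into the classes $M(L)$ and the comparison with Definition \ref{005} finishes the proof.
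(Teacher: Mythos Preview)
Your proposal is correct and follows exactly the paper's approach: the paper's own proof is the single sentence ``It is a direct consequence of the fact that $c_u=(K\sqcap L)!$ and $c_u=c_v$ for all $u,v\in M(L)$,'' and you have simply unpacked both of these facts in detail. Your additional remarks on trivial $L$ (where $m_L=0$) and the use of Proposition \ref{006}\eqref{012} to group monomials are reasonable elaborations but do not depart from the paper's reasoning.
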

\begin{proof}
It is a direct consequence of fact that $c_u=(K\sqcap L)!$ and $c_u=c_v$ for all $u,v\in M(L)$.
\end{proof}

For $I,J\in\sP_{n,m}$ and $\sigma\in\Sym_m$, we set $I\wedge_{\sigma}J=I\sqcap(\sigma\fcirc J)$. By Proposition \ref{022}, we can describe $I\wedge_{\sigma}J$ as the tuple $(N_1,\ldots,N_q)$, where $N_i=I_i\cap J_{\sigma(i)}$ for all $i\in[m]$ and $(N_{m+1},\ldots,N_q)=(I\backslash A)\wedge(J\backslash A)$ with $A=N_1\cup\cdots\cup N_m$. For instance, if $I=(\{0,1,3\},\{0,2\},\{4,5\})$ and $J=(\{0,2,4\},\{0,3\},\{1\},\{5\})$, then:\[A=\{0,2,3\},\quad I\backslash A=(\{1\},\{4,5\}),\quad J\backslash A=(\{1\},\{4\},\{5\}),\quad I\wedge_{21}J=(\{0,3\},\{0,2\},\{1\},\{4\},\{5\}).\]Thus, we obtain the following consequence.
\begin{crl}\label{031}
For every set superpartition $I\in\sP_{n,m}$, we have\[h_I=\sum_{(\sigma,\,J)\in\Sym_m\times\sP_{n,m}}(I\wedge_{\sigma}J)!(-1)^{\inv(\sigma)}m_J.\]
\end{crl}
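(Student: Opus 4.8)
The plan is to derive Corollary~\ref{031} from Proposition~\ref{065} by converting the sum over all partial set supercompositions $L$ of bidegree $(n,m)$ into a sum over pairs $(\sigma, J) \in \Sym_m \times \sP_{n,m}$, using the parametrization of partial set supercompositions established in Proposition~\ref{006}. Recall that by Proposition~\ref{006}\eqref{011}--\eqref{068}, every partial set supercomposition $L$ of bidegree $(n,m)$ that contributes a nonzero term is nontrivial, hence of the form $L = \sigma \fcirc J$ for a unique pair $(\sigma, J) \in \Sym_m \times \sP_{n,m}$; trivial $L$ (those with two or more blocks $\{0\}$) satisfy $m_L = 0$ by Proposition~\ref{003} and may be discarded. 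So the first step is to rewrite $h_I = \sum_L (I \sqcap L)!\, m_L$ as $h_I = \sum_{(\sigma, J) \in \Sym_m \times \sP_{n,m}} (I \sqcap (\sigma \fcirc J))!\, m_{\sigma \fcirc J}$.

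Next, I would invoke the two ingredients that turn this into the claimed formula. First, by Proposition~\ref{006}\eqref{007}, $m_{\sigma \fcirc J} = (-1)^{\inv(\sigma)} m_J$. Second, by the definition $I \wedge_\sigma J := I \sqcap (\sigma \fcirc J)$ introduced just before the corollary, the coefficient $(I \sqcap (\sigma \fcirc J))!$ is exactly $(I \wedge_\sigma J)!$. Substituting both identities yields
\[
h_I = \sum_{(\sigma,\,J)\in\Sym_m\times\sP_{n,m}} (I\wedge_\sigma J)!\,(-1)^{\inv(\sigma)}\, m_J,
\]
which is the statement. The description of $I \wedge_\sigma J$ as the explicit tuple $(N_1,\ldots,N_q)$ via Proposition~\ref{022} is not strictly needed for the proof — it is just an unpacking of the meet operation — but it confirms that $(I \wedge_\sigma J)!$ is a well-defined positive integer.

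The only point requiring a little care is the index-set manipulation: one must be sure that as $(\sigma, J)$ ranges over $\Sym_m \times \sP_{n,m}$, the supercompositions $\sigma \fcirc J$ range over \emph{exactly} the nontrivial partial set supercompositions of bidegree $(n,m)$, each exactly once. This is precisely the content of Proposition~\ref{006}\eqref{012} (every non null--symmetric monomial lies in a unique $M(\sigma \fcirc I)$, giving uniqueness of the pair) together with the fact that $\hat{0}_{n,m}$ and all trivial $L$ contribute $m_L = 0$. I do not anticipate a genuine obstacle here; the main thing to state cleanly is that reindexing is legitimate because both the summand's $m_J$-expansion (via \eqref{007}) and the collapsing of zero terms (via Proposition~\ref{003}) are compatible with the bijection $(\sigma, J) \mapsto \sigma \fcirc J$ onto nontrivial supercompositions.
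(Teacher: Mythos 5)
Your argument is correct and is exactly the derivation the paper intends: the corollary is stated as an immediate consequence of Proposition \ref{065}, obtained by dropping the trivial $L$ (which have $m_L=0$ by Proposition \ref{003}), reparametrizing the nontrivial $L$ as $\sigma\fcirc J$ with $(\sigma,J)\in\Sym_m\times\sP_{n,m}$, and using $m_{\sigma\fcirc J}=(-1)^{\inv(\sigma)}m_J$ together with the definition $I\wedge_\sigma J=I\sqcap(\sigma\fcirc J)$. No gaps; your care about the bijectivity of $(\sigma,J)\mapsto\sigma\fcirc J$ onto nontrivial supercompositions is exactly the right point to make explicit.
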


For instance, for the set superpartition in \eqref{074}, we have\[\begin{array}{rcl}
h_I&=&2m_{_{(\{0\},\{0,1,2,3\})}}-m_{_{(\{0,1,2\},\{0,3\})}}+2m_{_{(\{0,1,3\},\{0,2\})}}+m_{_{(\{0,1\},\{0,2,3\})}}+m_{_{(\{0\},\{0,1,2\},\{3\})}}\\&&\textcolor{white}{.}+m_{_{(\{0\},\{0,2,3\},\{1\})}}+2m_{_{(\{0\},\{0,2\},\{1,3\})}}+m_{_{(\{0\},\{0,2\},\{1\},\{3\})}}+m_{_{(\{0,1\},\{0,2\},\{3\})}}-m_{_{(\{0,2\},\{0,3\},\{1\})}}.
\end{array}\]

\subsection{Relationship between bases}

Here we show that the sets of power sum functions, elementary functions and homogeneous functions are indeed bases of the algebra $\sNCSym$.

As it occurs with monomial functions, for each $(\sigma,I)\in\Sym_m\times\sP_{n,m}$, we have\begin{equation}\label{020}p_{\sigma\fcirc I}=(-1)^{\inv(\sigma)}p_I,\qquad e_{\sigma\fcirc I}=(-1)^{\inv(\sigma)}e_I,\qquad h_{\sigma\fcirc I}=(-1)^{\inv(\sigma)}h_I.\end{equation}

We begin by writing monomial functions in terms of sums of power sum functions, elementary functions and homogeneous functions. For this, we will use the lattice structure of partial set supercompositions.

The \emph{M\"{o}bius function} \cite[Section 3.7]{St97} of $(\sC_{n,m},\sqsubseteq)$ is the map $\mu:\sC_{n,m}\times\sC_{n,m}\to\field$ such that $\mu(K,K)=1$, and\[\mu(K,L)=-\sum_{K\,\sqsubseteq\,N\,\sqsubset\,L}\mu(K,N)\quad\text{if}\quad K\sqsubset L.\]Observe that\begin{equation}\label{044}\sum_{K\,\sqsubseteq\,N\,\sqsubseteq\,L}\mu(K,N)=\delta_{K,L}\end{equation}A well known formula in terms of \emph{chains} can be given for the M\"{o}bius function $\mu$. Indeed, given two partial set supercompositions $K,L$, a \emph{chain} joining $K$ with $L$ is a finite sequence of partial set supercompositions $F_1\sqsubset\cdots\sqsubset F_q$ such that $F_1=K$ and $F_q=L$. A chain as above will be denoted by $[F_1,\ldots,F_q]$. Thus,\begin{equation}\label{029}\mu(K,L)=\sum_{[F_1,\ldots,\,F_q]}(-1)^{q-1},\end{equation}where the sum is taken over all chains joining $K$ with $L$. For instance, if $K=\hat{0}_{2,2}$ and $L=(\{0\},\{0,1,2\})$ as in Figure \ref{025}, there are three chains joining $K$ with $L$, that is,\[[K,L],\qquad[K,(\{0\},\{0,2\},\{1\}),L],\qquad[K,(\{0\},\{0\},\{1,2\}),L],\qquad[K,(\{0\},\{0,1\},\{2\}),L].\]Thus $\mu(K,L)=(-1)+3(-1)^2=2$.

Proposition \ref{027} and \cite[Section 7, Proposition 3]{Ro64} imply that, for every $K=(K_1,\ldots,K_l)\in\sC_{n,m}$, we have the following formulas:\begin{equation}\label{028}\mu(\hat{0}_{n,m},K)=\prod_{i=1}^l(-1)^{|K_i|-1}(|K_i|-1)!\quad\text{and}\quad\sum_{K\sqsubseteq L}|\mu(\hat{0}_{n,m},K)|=L!\end{equation}The \emph{sign} $(-1)^K$ of $K$ is defined as $\prod_{i=1}^l(-1)^{|K_i|-1}$. Observe that $(-1)^{K/L}=(-1)^K(-1)^L$, and\begin{equation}\label{045}\mu(\hat{0}_{n,m},K)=(-1)^K|\mu(\hat{0}_{n,m},K)|.\end{equation}

The following proposition is a consequence of Equation \eqref{028} and the inversion M\"{o}bius formula \cite[Section 3, Proposition 2, Proposition 3]{Ro64}. The proof is analogous to the one of \cite[Theorem 3.3]{RoSa06}.
\begin{pro}\label{015}
For every set superpartition $I$ of bidegree $(n,m)$, we have
\[m_I=\sum_{K\,\sqsupseteq\,I}\mu(I,K)p_K,\qquad m_I=\sum_{L\,\sqsupseteq\,I}\frac{\mu(I,L)}{\mu(\hat{0}_{n,m},L)}\sum_{K\,\sqsubseteq\,L}\mu(K,L)e_K,\qquad m_I=\sum_{L\,\sqsupseteq\,I}\frac{\mu(I,L)}{|\mu(\hat{0}_{n,m},L)|}\sum_{K\,\sqsubseteq\,L}\mu(K,L)h_K,\]where $K,L$ are partial set supercompositions of bidegree $(n,m)$.
\end{pro}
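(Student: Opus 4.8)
The plan is to establish the three identities in turn, obtaining the elementary and homogeneous ones from the power sum identity. For the latter, Proposition \ref{030} gives $p_K=\sum_{L\sqsupseteq K}m_L$ for every \emph{nontrivial} $K$, and every $K$ in the up--set $\{K:K\sqsupseteq I\}$ is nontrivial: a block $\{0\}$ of such a $K$ must already be a block of $I$, and $I\in\sP_{n,m}$ has at most one. Since $\sC_{n,m}$ is finite, restricting $(\sC_{n,m},\sqsubseteq)$ to this up--set and applying M\"obius inversion \cite[Section 3, Proposition 2, Proposition 3]{Ro64} gives $m_I=\sum_{K\sqsupseteq I}\mu(I,K)p_K$, the first formula.

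For the other two, I would reduce everything to the following claim: for every nontrivial $L\in\sC_{n,m}$,
\[\begin{gathered}\sum_{K\sqsubseteq L}\mu(K,L)\,e_K=\mu(\hat{0}_{n,m},L)\,p_L,\\\sum_{K\sqsubseteq L}\mu(K,L)\,h_K=|\mu(\hat{0}_{n,m},L)|\,p_L,\end{gathered}\]
where the sums run over all (possibly trivial) $K$ with $K\sqsubseteq L$. Both scalars are nonzero by \eqref{028}, so dividing and substituting into the first formula yields precisely the stated expansions, the outer ranges agreeing because $\mu(I,L)=0$ unless $I\sqsubseteq L$. To prove the claim I expand the left--hand side in the monomial basis, compare coefficients of $m_N$, and use that trivial $N$ contribute $m_N=0$ while, $L$ being nontrivial, $\sum_{N\sqsupseteq L}m_N=p_L$ by Proposition \ref{030}.

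The homogeneous part is direct. Inserting $h_K=\sum_N(K\sqcap N)!\,m_N$ (Proposition \ref{065}) and interchanging sums, the coefficient of $m_N$ is $\sum_{K\sqsubseteq L}\mu(K,L)(K\sqcap N)!$. Rewriting $(K\sqcap N)!=\sum_{J\sqsubseteq K\sqcap N}|\mu(\hat{0}_{n,m},J)|$ by the second identity of \eqref{028}, noting $J\sqsubseteq K\sqcap N$ iff $J\sqsubseteq K$ and $J\sqsubseteq N$, and interchanging once more, this becomes $\sum_{J\sqsubseteq N}|\mu(\hat{0}_{n,m},J)|\sum_{J\sqsubseteq K\sqsubseteq L}\mu(K,L)$, whose inner sum is $\delta_{J,L}$ by the recursion defining $\mu$ (cf.\ \eqref{044}). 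Hence the coefficient of $m_N$ is $|\mu(\hat{0}_{n,m},L)|$ if $L\sqsubseteq N$ and $0$ otherwise, so the whole sum is $|\mu(\hat{0}_{n,m},L)|\,p_L$.

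The elementary part is the step I expect to be the main obstacle, since it needs genuine lattice theory rather than bookkeeping. Inserting $e_K=\sum_{K\sqcap N=\hat{0}_{n,m}}m_N$ (Proposition \ref{043}) and interchanging, the coefficient of $m_N$ is $\sum_{K\sqsubseteq L,\ K\sqcap N=\hat{0}_{n,m}}\mu(K,L)$. By Proposition \ref{027} the interval $[\hat{0}_{n,m},L]$ is a lattice (a product of partition lattices) with bottom $\hat{0}_{n,m}$ and top $L$, and its meet is the restriction of $\sqcap$ (Proposition \ref{022}); since $K\sqsubseteq L$ forces $K\sqcap N=K\sqcap(N\sqcap L)$, the condition depends only on $N':=N\sqcap L$, which lies in that interval. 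Weisner's theorem (see, e.g., \cite{St97}) then makes this sum vanish when $N'\neq L$ and equal $\mu(\hat{0}_{n,m},L)$ when $N'=L$, that is, when $L\sqsubseteq N$; summing over $N$ gives $\mu(\hat{0}_{n,m},L)\,p_L$. The delicate points are exactly the lattice--theoretic ones: that $\sqcap$ restricts to $[\hat{0}_{n,m},L]$, that $\hat{0}_{n,m}$ and $L$ are its extremes, and the degenerate case $L=\hat{0}_{n,m}$ (where the interval is a point and the claim is trivial) --- all immediate from Section 2 --- and this is the part that runs parallel to \cite[Theorem 3.3]{RoSa06}.
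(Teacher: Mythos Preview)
Your argument is correct and follows essentially the same route the paper indicates: M\"obius inversion on $(\sC_{n,m},\sqsubseteq)$ for the power--sum identity, and then the lattice identity on $[\hat{0}_{n,m},L]$ (your Weisner step, equivalently the double--M\"obius computation $[K\sqcap N'=\hat{0}]=\sum_{J\sqsubseteq K\sqcap N'}\mu(\hat{0},J)$) together with \eqref{028} to pass to $e$ and $h$, exactly parallel to \cite[Theorem 3.3]{RoSa06}. The only point worth noting is that your sums over $K\sqsubseteq L$ include trivial $K$, for which Propositions \ref{043} and \ref{065} are stated but whose proofs tacitly assume nontriviality; this is harmless since for such $K$ both $e_K$ and $h_K$ vanish (pair each $L$ in the expansion with its swap at the two repeated $\{0\}$--positions), so the formulas hold trivially there.
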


\begin{thm}
The sets of power sum functions, elementary functions and complete homogeneous functions indexed by set superpartitions are bases of the algebra $\sNCSym$.
\end{thm}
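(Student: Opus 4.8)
The plan is to derive all three statements from Proposition~\ref{015}, together with the fact (Proposition~\ref{006}) that $\{m_I\mid I\in\sP_{n,m}\}$ is a basis of $\sNCSym_{n,m}$, so that $\dim_{\field}\sNCSym_{n,m}=|\sP_{n,m}|$. Since $\sNCSym=\bigoplus_{n,m\geq0}\sNCSym_{n,m}$, and since $\sNCSym_{n,m}=\{0\}$ and $\sP_{n,m}=\emptyset$ whenever $m>n+1$ (Proposition~\ref{003}), it is enough to show, for each $(n,m)$ with $m\leq n+1$, that the families $\{p_I\}_{I\in\sP_{n,m}}$, $\{e_I\}_{I\in\sP_{n,m}}$ and $\{h_I\}_{I\in\sP_{n,m}}$ are bases of $\sNCSym_{n,m}$. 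Each of these functions is symmetric, because Propositions~\ref{030},~\ref{043} and~\ref{065} express it as a $\field$--linear combination of monomial functions, so it lies in $\sNCSym_{n,m}$; and each family, being indexed by $\sP_{n,m}$, has at most $|\sP_{n,m}|=\dim_{\field}\sNCSym_{n,m}$ elements. Hence it suffices to prove that each family \emph{spans} $\sNCSym_{n,m}$, a spanning set of cardinality at most the dimension being automatically a basis; equivalently, I would show that every $m_I$ lies in the span of each family.

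The crux is a reduction from partial set supercompositions back to set superpartitions. First I would check that $e_K=h_K=0$ whenever $K$ is \emph{trivial}, i.e.\ has at least two blocks $\{0\}$. Let $\sigma\in\Sym_m$ be the transposition interchanging the positions of two such blocks, so $\sigma\fcirc K=K$; using Proposition~\ref{022} one sees that $\sqcap$ is $\fcirc$--equivariant, namely $(\sigma\fcirc K)\sqcap(\sigma\fcirc L)=\sigma\fcirc(K\sqcap L)$, so that $L\mapsto\sigma\fcirc L$ is an involution of the index set of $e_K=\sum_{K\sqcap L=\hat{0}_{n,m}}m_L$ (Proposition~\ref{043}) and of that of $h_K=\sum_L(K\sqcap L)!\,m_L$ (Proposition~\ref{065}) which preserves all coefficients, whereas $m_{\sigma\fcirc L}=(-1)^{\inv(\sigma)}m_L=-m_L$ by Proposition~\ref{006}; reindexing the sums by this involution gives $e_K=-e_K$ and $h_K=-h_K$. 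Second, every nontrivial $K\in\sC_{n,m}$ can be written uniquely as $K=\sigma\fcirc\bar K$ with $\sigma\in\Sym_m$ and $\bar K\in\sP_{n,m}$, and then $p_K=(-1)^{\inv(\sigma)}p_{\bar K}$, $e_K=(-1)^{\inv(\sigma)}e_{\bar K}$ and $h_K=(-1)^{\inv(\sigma)}h_{\bar K}$ by~\eqref{020}.

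Then I would fix $I\in\sP_{n,m}$ and feed these facts into Proposition~\ref{015}. In $m_I=\sum_{K\sqsupseteq I}\mu(I,K)p_K$ every index $K$ is nontrivial -- a block $\{0\}$ of $K$ can only come from a block $\{0\}$ of $I$ to which no nonfermionic block of $I$ has been adjoined, and $I$ has at most one such -- so rewriting each $p_K=(-1)^{\inv(\sigma)}p_{\bar K}$ displays $m_I$ as a $\field$--linear combination of $\{p_J\mid J\in\sP_{n,m}\}$. In $m_I=\sum_{L\sqsupseteq I}\frac{\mu(I,L)}{\mu(\hat{0}_{n,m},L)}\sum_{K\sqsubseteq L}\mu(K,L)e_K$ and $m_I=\sum_{L\sqsupseteq I}\frac{\mu(I,L)}{|\mu(\hat{0}_{n,m},L)|}\sum_{K\sqsubseteq L}\mu(K,L)h_K$ the inner sums do run over trivial $K$ as well, but those terms vanish by the previous step, and the surviving nontrivial ones are rewritten through~\eqref{020}; hence $m_I$ lies in the span of $\{e_J\mid J\in\sP_{n,m}\}$, respectively of $\{h_J\mid J\in\sP_{n,m}\}$. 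Thus each of the three families spans $\sNCSym_{n,m}$, hence is a basis of it; taking the union over all $(n,m)$ yields the three asserted bases of $\sNCSym$.

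The step I expect to be most delicate is precisely this passage from partial set supercompositions to set superpartitions -- establishing $e_K=h_K=0$ on trivial $K$ and keeping the signs of~\eqref{020} straight -- everything else being a formal span/dimension count. As a remark, for the power sum family there is a more direct route: Corollary~\ref{014} gives $p_I=m_I+\sum_{J\succ I}(-1)^{\inv(I,J)}m_J$ with $J$ ranging over $\sP_{n,m}$, so the transition matrix from $\{p_I\}$ to $\{m_I\}$ is unitriangular with respect to a suitable linear extension of the coarsening order on $\sP_{n,m}$, hence invertible, which already shows $\{p_I\}$ is a basis.
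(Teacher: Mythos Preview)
Your proposal is correct and follows essentially the same approach as the paper: the paper's proof is a one-liner invoking Proposition~\ref{015}, Equation~\eqref{020}, and the vanishing of $p_K,e_K,h_K$ on trivial $K$, and you have simply fleshed out each of these ingredients (in particular supplying a clean involution argument for the vanishing of $e_K$ and $h_K$ on trivial $K$, which the paper asserts without proof). Your additional remark on the unitriangularity route for the power sums via Corollary~\ref{014} is a valid alternative not mentioned in the paper.
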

\begin{proof}
It is a consequence of Proposition \ref{015}, Equation \ref{020} and the fact the functions are null whenever they are indexed by trivial partial set supercompositions.
\end{proof}
Observe that, specifically, the formula for power sum functions in Proposition \ref{015} can be written as follows\[m_I=\sum_{I\preceq J}\mu(I,J)(-1)^{\inv(I,J)}p_J,\qquad I,J\in\sP_{n,m}.\]

\begin{exm}
By Figure \ref{025}, Equation \ref{020}, Equation \ref{029} and Equation \ref{028}, if $I=(\{0\},\{0,1\},\{2\})$, Proposition \ref{015} implies the following\[\begin{array}{c}
m_I=p_{_{(\{0\},\{0,1\},\{2\})}}-p_{_{(\{0\},\{0,1,2\})}}+p_{_{(\{0,1\},\{0,2\})}},\\[1.5mm]
m_I=\frac{1}{2}e_{_{(\{0\},\{0,1\},\{2\})}}-\frac{1}{2}e_{_{(\{0\},\{0,2\},\{1\})}}-\frac{1}{2}e_{_{(\{0\},\{0,1,2\})}}+e_{_{(\{0,1\},\{0,2\})}},\\[2mm]
m_I=\frac{5}{2}h_{_{(\{0\},\{0,1\},\{2\})}}-\frac{1}{2}h_{_{(\{0\},\{0,2\},\{1\})}}-\frac{1}{2}h_{_{(\{0\},\{0,1,2\})}}+h_{_{(\{0,1\},\{0,2\})}}.
\end{array}\]
\end{exm}

The following result is a consequence of Proposition \ref{015} and the inversion M\"{o}bius formula.
\begin{pro}\label{035}
For every set superpartition $I$ of bidegree $(n,m)$, we have:\[\begin{array}{rclrcl}
e_I&=&{\displaystyle\sum_{K\,\sqsubseteq\,I}\mu(0_{n,m},K)p_K},&\kern+4em p_I&=&{\displaystyle\frac{1}{\mu(0_{n,m},I)}\sum_{K\,\sqsubseteq\,I}\mu(K,I)e_K},\\[0.7cm]
h_I&=&{\displaystyle\sum_{K\,\sqsubseteq\,I}|\mu(0_{n,m},K)|p_K},&\kern+4em p_I&=&{\displaystyle\frac{1}{|\mu(0_{n,m},I)|}\sum_{K\,\sqsubseteq\,I}\mu(K,I)h_K},\\[0.7cm]
e_I&=&{\displaystyle\sum_{K\,\sqsubseteq\,I}(-1)^K\sum_{L\,\sqsubseteq\,K}\mu(L,K)h_L},&\kern+4em h_I&=&{\displaystyle\sum_{K\,\sqsubseteq\,I}(-1)^K\sum_{L\,\sqsubseteq\,K}\mu(L,K)e_L}.
\end{array}\]where $K$ is a partial supercomposition.
\end{pro}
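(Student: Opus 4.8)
The plan is to deduce the six identities in a fixed order, starting from the monomial expansions of Propositions~\ref{030}, \ref{043} and \ref{065} and then invoking M\"obius inversion on the finite poset $(\sC_{n,m},\sqsubseteq)$. The one point to settle beforehand is that the formula $p_K=\sum_{L\,\sqsupseteq\,K}m_L$ holds for \emph{every} partial set supercomposition $K$, not only for the nontrivial ones covered by Proposition~\ref{030}: when $K$ is trivial both sides vanish, the left one because any function indexed by a trivial partial set supercomposition is null, and the right one because, fixing a transposition $\tau\in\Sym_m$ that interchanges two blocks $\{0\}$ of $K$, the assignment $L\mapsto\tau\fcirc L$ is a fixed--point--free involution of $\{L\in\sC_{n,m}:L\sqsupseteq K\}$ (fixed--point--free on the nontrivial $L$, since these have pairwise distinct blocks, while trivial $L$ contribute $m_L=0$) under which $m_{\tau\fcirc L}=-m_L$.

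Granting this, I would first establish the two expansions $e_I=\sum_{K\,\sqsubseteq\,I}\mu(\hat{0}_{n,m},K)\,p_K$ and $h_I=\sum_{K\,\sqsubseteq\,I}\bigl|\mu(\hat{0}_{n,m},K)\bigr|\,p_K$, which in fact remain valid with any partial set supercomposition in place of $I$. Substituting $p_K=\sum_{L\,\sqsupseteq\,K}m_L$ into the right--hand sides and interchanging the two finite sums, the coefficient of each $m_L$ becomes $\sum_{N\,\sqsubseteq\,K\sqcap L}\mu(\hat{0}_{n,m},N)$ in the first case and $\sum_{N\,\sqsubseteq\,K\sqcap L}\bigl|\mu(\hat{0}_{n,m},N)\bigr|$ in the second, where I use that $N\sqsubseteq K$ together with $N\sqsubseteq L$ is the same as $N\sqsubseteq K\sqcap L$. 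By Equation~\eqref{044} the first sum equals $\delta_{\hat{0}_{n,m},\,K\sqcap L}$ and by Equation~\eqref{028} the second equals $(K\sqcap L)!$; reinserting these and comparing with Propositions~\ref{043} and \ref{065} identifies the right--hand sides with $\sum_{K\sqcap L=\hat{0}_{n,m}}m_L=e_I$ and $\sum_{L}(K\sqcap L)!\,m_L=h_I$, respectively.

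Since $\mu(\hat{0}_{n,m},K)=\prod_i(-1)^{|K_i|-1}(|K_i|-1)!$ never vanishes by Equation~\eqref{028}, M\"obius inversion on $(\sC_{n,m},\sqsubseteq)$ applied to these two expansions---reading $K\mapsto\mu(\hat{0}_{n,m},K)p_K$, respectively $K\mapsto|\mu(\hat{0}_{n,m},K)|p_K$, as the summand---produces $\mu(\hat{0}_{n,m},I)\,p_I=\sum_{K\,\sqsubseteq\,I}\mu(K,I)e_K$ and $|\mu(\hat{0}_{n,m},I)|\,p_I=\sum_{K\,\sqsubseteq\,I}\mu(K,I)h_K$, i.e.\ the two displayed formulas for $p_I$ after dividing by the (nonzero) leading scalar. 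The remaining two identities then follow by composition: substituting these expressions for $p_K$ back into the expansions of $e_I$ and $h_I$ in terms of the $p_K$ and collapsing the scalar $\mu(\hat{0}_{n,m},K)/|\mu(\hat{0}_{n,m},K)|$, which equals $(-1)^K$ by Equation~\eqref{045} (and similarly for its reciprocal, since it is $\pm1$), gives $e_I=\sum_{K\,\sqsubseteq\,I}(-1)^K\sum_{L\,\sqsubseteq\,K}\mu(L,K)h_L$ and $h_I=\sum_{K\,\sqsubseteq\,I}(-1)^K\sum_{L\,\sqsubseteq\,K}\mu(L,K)e_L$. The two sum--interchanges, the M\"obius inversions and the final substitutions are routine bookkeeping; the only delicate step is the preliminary claim that $\sum_{L\,\sqsupseteq\,K}m_L$ vanishes on trivial $K$, for without it the interchange in the second paragraph would attach incorrect coefficients to the monomial functions indexed by those $L$ with $K\sqcap L=\hat{0}_{n,m}$, and it is precisely the sign--cancellation coming from the $\fcirc$--action that repairs this.
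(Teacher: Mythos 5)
Your argument is correct, and it is more self-contained than what the paper offers. The paper disposes of Proposition \ref{035} with a one-line appeal to Proposition \ref{015} and M\"obius inversion, i.e.\ it proposes inverting the expansions of $m_I$ in terms of $p$, $e$, $h$; you instead work directly from the monomial expansions $p_K=\sum_{L\sqsupseteq K}m_L$, $e_K=\sum_{K\sqcap L=\hat{0}_{n,m}}m_L$ and $h_K=\sum_L(K\sqcap L)!\,m_L$ (Propositions \ref{030}, \ref{043}, \ref{065}), interchange sums using \eqref{044} and \eqref{028}, and only then invert on $(\sC_{n,m},\sqsubseteq)$. Since both routes are ultimately M\"obius inversion on the same poset, the difference lies in the starting data, but your route buys something concrete: it forces the issue the paper glosses over, namely that Proposition \ref{030} is stated only for nontrivial $K$ while the sums in Proposition \ref{035} (and the inversion over the whole interval $[\hat{0}_{n,m},I]$) involve trivial $K$ as well; your sign-reversing involution $L\mapsto\tau\fcirc L$, with the observation that trivial $L$ contribute $m_L=0$ and that $m_{\tau\fcirc L}=-m_L$ otherwise, settles exactly this point, and you correctly lean on Propositions \ref{043} and \ref{065} being stated for arbitrary elements of $\sC_{n,m}$ so that the identities to be inverted hold at every element below $I$. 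Two cosmetic slips, neither of which affects the argument: in the sum interchange the coefficient of $m_L$ should be $\sum_{N\sqsubseteq I\sqcap L}\mu(\hat{0}_{n,m},N)$ (you wrote $K\sqcap L$ although $K$ is still bound as the outer summation variable), and \eqref{044} is being applied to the interval $[\hat{0}_{n,m},I\sqcap L]$.
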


As in the classic case, by using Proposition \ref{035}, we may define a linear map $\omega:\sNCSym\to\sNCSym$ satisfying $\omega(e_I)=h_I$ for all $I\in\sP_n$. Due to \eqref{020}, for every $K=\sigma\fcirc I$ with $(\sigma,I)\in\Sym_m\times\sP_{n,m}$, we have\begin{equation}\label{046}\omega(e_K)=\omega(e_{\sigma\fcirc I})=\omega((-1)^{\inv(\sigma)}e_I)=(-1)^{\inv(\sigma)}\omega(e_I)=(-1)^{\inv(\sigma)}h_I=h_{\sigma\fcirc I}=h_K.\end{equation}By using Proposition \ref{035} and \eqref{046}, we have $\omega^2=1$ and $\omega(p_I)=(-1)^Ip_I$.

As it was shown in \cite[Lemma 4.1]{BeHoRoZa06} and \cite[Proposition 2.39 and Corollary 2.41]{Be-B10} for $\NCSym$. In superspace, the bases of power sum, elementary and complete homogeneous functions are multiplicative.

\begin{pro}\label{042}
For set superpartitions $I\in\sP_{n,m}$ and $J\in\sP_{p,q}$, we have
\[p_Ip_J=p_{I/J},\qquad e_Ie_J=e_{I/J},\qquad h_Ih_J=h_{I/J}.\]
\end{pro}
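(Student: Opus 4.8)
The plan is to establish the power sum identity directly at the level of monomials, in the slightly more general form $p_Kp_{K'}=p_{K/K'}$ for arbitrary partial set supercompositions $K\in\sC_n$ and $K'\in\sC_p$; to run the same bijection for the elementary functions; and to deduce the complete homogeneous case from the power sum case by means of the M\"obius expansion in Proposition~\ref{035}.

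\emph{The over product and the key bijection.} Write $(n,m)$ and $(p,q)$ for the bidegrees of $K$ and $K'$. Since $K/K'=K\cup K'[n]$, its fermionic blocks are $K_1,\dots,K_m$ followed by $K'_1[n],\dots,K'_q[n]$, and its nonfermionic blocks are those of $K$ followed by the $n$--shifts of those of $K'$; the ordering by minima of the nonfermionic blocks is automatic because the nonzero entries of the $K$--blocks lie in $[n]$ while those of the $K'[n]$--blocks lie in $[n+1,n+p]$, and for the same reason no block of $K/K'$ meets both $[n]$ and $[n+1,n+p]$. Hence the two defining conditions of Definition~\ref{008} for the index $K/K'$ decouple: on the coordinates $(a_1,\dots,a_m,b_1,\dots,b_n)$ they are exactly conditions \eqref{009} and \eqref{013} for $K$, and on the remaining coordinates, after the $n$--shift, exactly those for $K'$. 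Therefore
\[P(K)\times P(K')\ \longrightarrow\ P(K/K'),\qquad\bigl((a,b),(c,d)\bigr)\longmapsto(a_1,\dots,a_m,c_1,\dots,c_q,b_1,\dots,b_n,d_1,\dots,d_p),\]
is a bijection. Because $x$--variables commute with $\theta$--variables, the monomial indexed by the image, namely $\theta_{a_1}\cdots\theta_{a_m}\theta_{c_1}\cdots\theta_{c_q}x_{b_1}\cdots x_{b_n}x_{d_1}\cdots x_{d_p}$, equals in $\sseries$ the product $\bigl(\theta_{a_1}\cdots\theta_{a_m}x_{b_1}\cdots x_{b_n}\bigr)\bigl(\theta_{c_1}\cdots\theta_{c_q}x_{d_1}\cdots x_{d_p}\bigr)$ with no extra sign (its $\theta$--word is the concatenation of the two $\theta$--words). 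Summing over the bijection gives $p_{K/K'}=p_Kp_{K'}$, and in particular $p_Ip_J=p_{I/J}$.

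\emph{Elementary and complete homogeneous functions.} The same map $E(I)\times E(J)\to E(I/J)$ is a bijection, because conditions \eqref{017} and \eqref{018} of Definition~\ref{016} only compare a fermionic index with an $x$--index lying in the same block, or two $x$--indices lying in the same block, and by the structural remarks above no such comparison couples an $I$--coordinate with a $J$--coordinate; summing gives $e_{I/J}=e_Ie_J$ (the null--symmetric monomials appearing on each side cancel pairwise). For the homogeneous functions, first observe that if $I/J$ is trivial then both sides vanish: $h_{I/J}=0$ for want of a non--null--symmetric monomial of its bidegree, and $h_Ih_J=0$ since by Proposition~\ref{035} it is a combination of products $p_{N_1}p_{N_2}=p_{N_1/N_2}$ with $N_1\sqsubseteq I$, $N_2\sqsubseteq J$, each $N_1/N_2$ containing two copies of the block $\{0\}$ and hence having vanishing power sum. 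Assuming $I/J$ nontrivial, Proposition~\ref{035} and the power sum identity give
\[h_Ih_J=\sum_{N_1\sqsubseteq I,\ N_2\sqsubseteq J}\bigl|\mu(\hat 0_{n,m},N_1)\bigr|\,\bigl|\mu(\hat 0_{p,q},N_2)\bigr|\;p_{N_1/N_2}.\]
The map $(N_1,N_2)\mapsto N_1/N_2$ is a bijection from $\{N_1\sqsubseteq I\}\times\{N_2\sqsubseteq J\}$ onto $\{N\sqsubseteq I/J\}$: since each block of $I/J$ lies entirely in $[n]_0$ or in $\{0\}\cup[n+1,n+p]$ and $\sqsubseteq$ builds the blocks of $I/J$ by joining blocks of $N$, every block of $N$ lies in one of these two ranges, and separating them recovers $N_1\sqsubseteq I$ and (after an $n$--unshift) $N_2\sqsubseteq J$ with $N=N_1/N_2$; the converse is clear. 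Moreover $|\mu(\hat 0_{n+p,m+q},N_1/N_2)|=|\mu(\hat 0_{n,m},N_1)|\,|\mu(\hat 0_{p,q},N_2)|$ by \eqref{028}, since $|\mu(\hat 0,N)|=\prod_i(|N_i|-1)!$ and the blocks of $N_1/N_2$ are those of $N_1$ together with the equinumerous $n$--shifts of those of $N_2$. Reindexing yields $h_Ih_J=\sum_{N\sqsubseteq I/J}|\mu(\hat 0,N)|\,p_N=h_{I/J}$.

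\emph{Main obstacle.} The substance of the argument is the bookkeeping that legitimizes the monomial bijections: that the fermionic blocks of $I/J$ are exactly those of $I$ followed by the $n$--shifts of those of $J$ (so $\theta$--words concatenate without a sign), that $x$-- and $\theta$--variables commute (so a product of monomials may be put in normal form), and that no block of $I/J$ straddles the two degree ranges (so the defining conditions genuinely decouple). A direct monomial treatment of the $h$--case is also possible but less clean, since the partial set supercomposition attached to a product of monomials may merge a block of $I$ with a block of $J$ sharing a common variable; routing the $h$--case through Proposition~\ref{035} avoids this, the only delicate point then being the order--isomorphism of lower intervals used above.
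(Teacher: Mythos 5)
Your argument is correct in substance but takes a genuinely different route from the paper's. For the power sums you work directly at the level of monomials, observing that the defining conditions of Definition~\ref{008} for $K/K'$ decouple, so that $P(K)\times P(K')\to P(K/K')$ is a bijection compatible with the product and without signs; the paper instead expands $p_I$ into monomial functions and invokes the shuffle product formula of Proposition~\ref{023}. Your monomial bijection for $e$ replaces the paper's M\"obius-theoretic computation (Proposition~\ref{035} together with multiplicativity of $\mu$ and of the power sums) and is arguably cleaner; moreover, proving $p_Kp_{K'}=p_{K/K'}$ for arbitrary partial set supercompositions is exactly the strengthening that is needed (and that the paper uses implicitly) once $\sqsubseteq$--refinements of $I$ and $J$ enter the picture. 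For $h$ you run the M\"obius/power-sum argument, i.e.\ the paper's own technique for $e$, whereas the paper deduces the $h$ case from the $e$ case via the involution $\omega$ with $\omega(e_K)=h_K$. Both you and the paper apply the expansion of Proposition~\ref{035} to the index $I/J$, which need not be a set superpartition (its fermionic blocks may be out of order); this is harmless in view of \eqref{020} and Proposition~\ref{006}, but is a shared gloss rather than a point against you.

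The one step that is wrong as written is your treatment of the degenerate case in which both $I$ and $J$ contain the block $\{0\}$: you claim $h_{I/J}=0$ ``for want of a non--null--symmetric monomial of its bidegree''. That is false in general, since non--null--symmetric monomials of bidegree $(n+p,m+q)$ exist whenever $m+q\le n+p+1$ (for instance $\theta_1\theta_2x_1x_2$ in bidegree $(2,2)$, arising from $I=J=(\{0\},\{1\})$). The vanishing of $h_{I/J}$ in this case is a cancellation phenomenon, not an emptiness phenomenon: writing $h_{I/J}=\sum_L\bigl((I/J)\sqcap L\bigr)!\,m_L$ by Proposition~\ref{065} and letting $\tau$ be the transposition of two fermionic positions of $I/J$ carrying the block $\{0\}$, one checks that $\bigl((I/J)\sqcap L\bigr)!=\bigl((I/J)\sqcap(\tau\fcirc L)\bigr)!$ while $m_{\tau\fcirc L}=-m_L$ and $\tau\fcirc L\neq L$ for every nontrivial $L$ (trivial $L$ contribute $m_L=0$), so all terms cancel in pairs. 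With that repair --- your verification that $h_Ih_J=0$ in this case, via the vanishing of the power sums $p_{N_1/N_2}$, is fine --- the proof is complete.
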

\begin{proof}
First, we show the multiplicativity of power sum functions. Let $A=\{H\mid H\in K\shuffle L,\,K\succeq I,\,L\succeq J\}$. By definition, $H\succeq I/J$ for all $H\in A$. Conversely, let $H=(H_1,\ldots,H_k)\succeq I/J$. Note that two different fermionic blocks of $I/J$ cannot be contained in the same fermionic block of $H$. Then, for each $i\in[k]$, we define the following sets:\begin{gather*}K_i=\left\{\begin{array}{ll}
H_i\cap[n]_0&\text{if $H_i$ contains a fermionic block of $I$,}\\
H_i\cap[n]&\text{otherwise}.\end{array}\right.\\[0.2cm]
L_i=\left\{\begin{array}{ll}
H_i\cap[n+1,n+p]_0&\text{if $H_i$ contains a fermionic block of $J[n]$,}\\
H_i\cap[n+1,n+p]&\text{otherwise}.
\end{array}\right.\end{gather*}
Observe that $H_i=K_i\sqcup L_i$ for all $i\in[k]$. This implies that $H\in K\shuffle L$, where $K$ and $L$ are the set superpartitions obtained by removing the empty components from $(K_1,\ldots,K_k)$ and $(L_1,\ldots,L_k)$ respectively. Further $K\succeq I$ and $L\succeq J$. Thus, by applying Proposition \ref{023}, we obtain the following\[p_Ip_J=\sum_{K\succeq I}\sum_{L\succeq J}m_Km_L=\sum_{K\succeq I}\sum_{L\succeq J}\left(\sum_{H\in K\shuffle L}m_H\right)=\sum_{H\in A}m_H=\sum_{H\succeq I/J}m_H=p_{I/J}.\]

To show the multiplicativity of elementary functions, we first observe that if $K\sqsubseteq I$ and $L\sqsubseteq J$, then $K/L\sqsubseteq I/J$. Conversely, if $H\sqsubseteq I/J$, we consider $K\in\sC_n$ obtained by removing the blocks of $H$ contained in blocks of $J[n]$, and $L\in\sC_p$ obtained by $(-n)$--shifting the collection produced by removing the blocks of $H$ contained in blocks of $I$. Then $H=K/L$. Thus, by applying Proposition \ref{035}, and since the M\"{o}bius function is multiplicative, we obtain the following\[e_Ie_J=\sum_{K\,\sqsubseteq\,I}\sum_{L\,\sqsubseteq\,J}\mu(0_{n,m},K)\mu(0_{p,q},L)p_{K/L}=\sum_{H\,\sqsubseteq\,I/J}\mu(0_{n,m}/0_{p,q},K/L)p_H=\sum_{H\,\sqsubseteq\,I/J}\mu(0_{n+p,m+q},H)p_H=e_{I/J}.\]Finally, to show the multiplicativity of complete homogeneous functions, we first note that $\omega$ is an algebra morphism. By regarding power sun functions, we have\[\omega(p_Kp_L)=\omega(p_{K/L})=(-1)^{K/L}p_{K/L}=(-1)^Kp_K(-1)^Lp_L=\omega(p_K)\omega(p_L).\]Therefore $h_Ih_J=\omega(e_I)\omega(e_J)=\omega(e_Ie_J)=\omega(e_{I/J})=h_{I/J}$.
\end{proof}

\section{Commuting variables and inner product}

In this section, we show formulas that project our bases in their commuting counterpart, and we prove that there is an isometry from $\sNCSym$ to $\sSym$. 

Let $\field^{\theta}[x]=\sseries/I$, where $I$ is the ideal of $\sseries$ generated by the elements $x_ix_j-x_jx_i$ for all $i,j\in\N$. We denote by $\rho$ the associated natural projection $\sseries\to\kern-0.8em\to\field^{\theta}[x]$. The image of $\sNCSym$ under this projection is the algebra $\sSym$ of symmetric functions in commuting variables in superspace, introduced in \cite{DeLaMa01}. The bases of this algebra are indexed by the so--called superpartitions.

\subsection{Symmetric functions in commuting variables in superspace}\label{080}

Recall that bases of the classic symmetric functions in commuting variables are indexed by partitions of numbers \cite{Mac99,St99}. A \emph{partition} $\lambda$ of \emph{degree} $n=:\deg(\lambda)$ is a decreasing sequence $(\lambda_1,\ldots,\lambda_k)$ of nonnegative integers, called \emph{components}, such that $\lambda_1+\cdots+\lambda_k=n$. In general, every finite sequence of nonnegative integers $u$ can be converted into a partition $\sort(u)$ by sorting its components in decreasing order. The partition $\lambda$ can also be denoted by $(1^{r_1},\ldots,n^{r_n})$, where $r_i$ is the \emph{multiplicity} of $i$, that is, the number of times that $i$ occurs in $\lambda$. We set $\lambda!=\lambda_1!\cdots\lambda_k!$ and $\lambda^!=r_1!\cdots r_n!$. For instance, if $\lambda=(5,5,5,3,3,1,1,1,1)$, then\[\lambda=(1^4,3^3,5^2),\qquad\lambda!=5!\,5!\,5!\,3!\,3!,\qquad\lambda^!=4!\,3!\,2!.\]

As we mentioned above, the bases of $\sSym$ are indexed by superpartitions \cite{DeLaMa06}. A \emph{superpartition} $\Lambda$ of degree $n$ \cite[Subsection 4.2.2]{DeLaMa01} is an array $(\Lambda_1,\ldots,\Lambda_m\,;\Lambda_{m+1},\ldots,\Lambda_k)$ of nonnegative integers, called \emph{components}, satisfying $\Lambda_1+\cdots+\Lambda_k=n$, where $\Lambda^a:=(\Lambda_1,\ldots,\Lambda_m)$ is a partition of different components and $\Lambda^s:=(\Lambda_{m+1},\ldots,\Lambda_k)$ is a usual partition. Components of $\Lambda^a$ are called \emph{fermionic components} of $\Lambda$, and the numbers $m,k$ are called the \emph{fermionic degree} and the \emph{length} of $\Lambda$, respectively. Set $\Lambda!=\Lambda^a!\Lambda^s!$. The degree, fermionic degree and length of $\Lambda$ will be denoted as with set superpartitions, that is, $\deg(\Lambda)$, $\df(\Lambda)$ and $\ell(\Lambda)$ respectively. The collection of superpartitions of degree $n$ will be denoted by $\sPar_n$, and the subcollection of elements with fermionic degree $m$ will be denoted by $\sPar_{n,m}$. As with usual partitions, every array of nonnegative integers $u=(u_1,\ldots,u_m;u_{m+1},\ldots,u_k)$, with $u_1,\ldots,u_m$ all different, can be converted into a superpartition $\sort(u)$, where $\sort(u)^a=\sort(u_1,\ldots,u_m)$ and $\sort(u)^s=\sort(u_{m+1},\ldots,u_k)$.

Some of the classical symmetric functions in superspace in commutating variables were defined in \cite{DeLaMa06}. Indeed, given a superpartition $\Lambda=(\Lambda_1,\ldots,\Lambda_m;\Lambda_{m+1},\ldots,\Lambda_n)$, we have the following\begin{itemize}
\item\emph{Monomial symmetric functions in superspace}:\[m_{\Lambda}(x,\theta)=\sum_{\sigma\in\Sym_n} \theta_{\sigma(1)}\cdots\theta_{\sigma(m)}x_{\theta(1)}^{\Lambda_1}\cdots x_{\sigma(n)}^{\Lambda_n},\]where the sum considers no repeated terms.
\item\emph{Power--sum symmetric functions in superspace}: $p_{\Lambda}=\tp_{\Lambda_1}\cdots\tp_{\Lambda_m}p_{\Lambda_{m+1}}\cdots p_{\Lambda_n}$, where\[\tp_k=\sum_{i=1}^n\theta_ix_i^k\quad\text{and}\quad p_r=\sum_{i=1}^nx_i^r,\quad\text{for}\quad k\geq0, r\geq1.\]
\item\emph{Elementary symmetric functions in superspace}: $e_{\Lambda}=\te_{\Lambda_1}\cdots\te_{\Lambda_m}e_{\Lambda_{m+1}}\cdots e_{\Lambda_n}$, where\[\te_k=m_{(0;1^k)}\quad\text{and}\quad e_r=m_{(\emptyset;1^r)},\quad\text{for}\quad k\geq0, r\geq1.\] 
\item\emph{Complete homogeneous symmetric functions in superspace}: $h_{\Lambda}=\th_{\Lambda_1}\ldots\th_{\Lambda_m}h_{\Lambda_{m+1}}\cdots h_{\Lambda_n}$, where\[\th_k=\sum_{|\Lambda|=k,\,\df(\Lambda)=1}(\Lambda_1+1)m_{\Lambda}\quad\text{ and}\quad h_r=\sum_{|\Lambda|=r,\,\df(\Lambda)=0}m_\Lambda,\quad\text{for}\quad k\geq0, r\geq1.\]
\end{itemize}
Observe that the bases of $\SSym$ are recovered whenever $\Lambda^a=\emptyset$.

For every set superpartition $I=(I_1,\ldots,I_k)$ of bidegree $(n,m)$, which has no fermionic blocks of the same size, we define the super partition:\[\Lambda(I)=\sort(|I_1|-1,\ldots,|I_m|-1;|I_{m+1}|,\ldots,|I_k|).\]Given a superpartition $\Lambda$, we say that $I$ is \emph{of type} $\Lambda$ if $\Lambda(I)=\Lambda$. We denote by $\varepsilon(I)$ the minimal number of transpositions needed to obtain $\Lambda(I)^a$ from $(|I_1|-1,\ldots,|I_m|-1)$. For instance, the set superpartitions $(\{0,1,7\},\{0,5\},\{2\},\{3,4\},\{6\})$ and $(\{0,2\},\{0,3,5\},\{1\},\{4\},\{6,7\})$ are of type $(2,1;2,1,1)$.

To determine the number of set superpartitions of type $\Lambda=(\Lambda_1,\ldots,\Lambda_m;\Lambda_{m+1},\ldots,\Lambda_k)$, we set $\Lambda^{\!+}=\sort(\Lambda_1,\ldots,\Lambda_k)$. Note that $\Lambda!=\Lambda^{\!+}!$.

\begin{lem}\label{037}
Let $\Lambda=(\Lambda_1,\ldots,\Lambda_m;\Lambda_{m+1},\ldots,\Lambda_k)$ be a superpartition of degree $n$, and for each $i\in[k]$ denote by $k_i$ the multiplicity of $\Lambda_i$ in $\Lambda^{\!+}$. Then $(\Lambda^{\!+})^!=k_1\cdots k_m(\Lambda^s)^!$.
\end{lem}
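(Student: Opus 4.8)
The plan is to compare, value by value, multiplicities in $\Lambda^{\!+}$, in $\Lambda^a=(\Lambda_1,\ldots,\Lambda_m)$ and in $\Lambda^s$. For a nonnegative integer $v$, write $s_v$ for the multiplicity of $v$ in $\Lambda^s$ and $t_v$ for its multiplicity in $\Lambda^{\!+}$. By definition of a superpartition the components $\Lambda_1,\ldots,\Lambda_m$ are pairwise distinct, and since $\Lambda^{\!+}=\sort(\Lambda_1,\ldots,\Lambda_k)$ is obtained by merging the components of $\Lambda^a$ with those of $\Lambda^s$, we have $t_v=s_v$ whenever $v\notin\{\Lambda_1,\ldots,\Lambda_m\}$, while $t_{\Lambda_i}=s_{\Lambda_i}+1$ for each $i\in[m]$, because $\Lambda_i$ occurs exactly once among the fermionic components. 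In particular $k_i=t_{\Lambda_i}=s_{\Lambda_i}+1$ for every $i\in[m]$.

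With this in hand I would split the product $(\Lambda^{\!+})^!=\prod_v t_v!$ according to whether $v$ is a fermionic component of $\Lambda$. The factors with $v\notin\{\Lambda_1,\ldots,\Lambda_m\}$ contribute $\prod_{v\notin\{\Lambda_1,\ldots,\Lambda_m\}}s_v!$, and the factor indexed by $v=\Lambda_i$ equals $t_{\Lambda_i}!=(s_{\Lambda_i}+1)!=(s_{\Lambda_i}+1)\,s_{\Lambda_i}!=k_i\,s_{\Lambda_i}!$. Hence
\[(\Lambda^{\!+})^!=\left(\prod_{i=1}^{m}k_i\,s_{\Lambda_i}!\right)\prod_{v\notin\{\Lambda_1,\ldots,\Lambda_m\}}s_v!=k_1\cdots k_m\left(\prod_{i=1}^{m}s_{\Lambda_i}!\right)\prod_{v\notin\{\Lambda_1,\ldots,\Lambda_m\}}s_v!.\]
Because the $\Lambda_i$ are distinct, the set $\{\Lambda_1,\ldots,\Lambda_m\}$ and its complement partition the set of all values, so the last two products recombine into $\prod_v s_v!=(\Lambda^s)^!$, yielding $(\Lambda^{\!+})^!=k_1\cdots k_m(\Lambda^s)^!$.

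I do not expect a genuine obstacle: the whole argument is careful bookkeeping of multiplicities, and it is essentially the only way the hypothesis enters. The single point that must not be skipped is the distinctness of the fermionic part $\Lambda^a$, which is exactly what makes each $k_i$ equal to $s_{\Lambda_i}+1$ and prevents the contribution $k_i\,s_{\Lambda_i}!$ from being replaced by something larger.
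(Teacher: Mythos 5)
Your proof is correct and follows essentially the same route as the paper: the paper's one-line argument is exactly the multiplicity comparison you make (the multiplicity of each fermionic value $\Lambda_i$ in $\Lambda^{\!+}$ exceeds its multiplicity in $\Lambda^s$ by one, since the $\Lambda_i$ are distinct, while all other multiplicities agree), and you simply carry out the resulting factorial bookkeeping explicitly. No gap; your version just writes out the details the paper leaves implicit.
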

\begin{proof}
The result is a direct consequence of the fact that, for $i>m$, the multiplicity of $\Lambda_i$ in $\Lambda^s$ is either $k_i-1$ whenever $\Lambda_i\in\Lambda^a$, or it is $k_i$ otherwise.
\end{proof}

Recall that, for a partition $\lambda$ of degree $n$, the number of set partitions of type $\lambda$ is $\binom{n}{\lambda}=\frac{n!}{\lambda!\lambda^!}$. Below we generalize this result for superpartitions.

\begin{lem}\label{038}
Let $\Lambda=(\Lambda_1,\ldots,\Lambda_m;\Lambda_{m+1},\ldots,\Lambda_k)$ be a superpartition of degree $n$, and for each $i\in[m]$ denote by $k_i$ the multiplicity of $\Lambda_i$ in $\Lambda^{\!+}$. Then, the number of set superpartitions of type $\Lambda$ is\[\binom{n}{\Lambda}:=k_1\cdots k_m\binom{n}{\,\,\Lambda^{\!+}}=\frac{n!}{\Lambda!(\Lambda^s)^!}.\]
\end{lem}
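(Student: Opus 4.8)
The plan is to count the set superpartitions of type $\Lambda$ by a two–step recipe: first record the underlying set partition of $[n]$, then record which of its blocks are promoted to fermionic blocks. Precisely, I would set up a bijection between set superpartitions $I$ of type $\Lambda$ and pairs $(\pi,\phi)$, where $\pi$ is a set partition of $[n]$ whose multiset of block sizes is the multiset of positive parts of $\Lambda^{\!+}$, and $\phi$ assigns to each positive fermionic component $\Lambda_i$ $(i\in[m])$ one block of $\pi$ of size $\Lambda_i$; if $0$ occurs among the fermionic components (necessarily as $\Lambda_m$, since $\Lambda^a$ has distinct parts), one moreover appends the block $\{0\}$, which involves no choice. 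The map $I\mapsto(\pi,\phi)$ takes $\pi$ to be the collection of the nonempty sets $I_j^+$ and $\phi$ to be the record of which blocks of $\pi$ came from fermionic blocks of $I$; its inverse adjoins $0$ to each block selected by $\phi$, appends $\{0\}$ when $0$ is a fermionic component, leaves the remaining blocks nonfermionic, and orders the blocks in the canonical way of a set superpartition. One checks directly that the resulting $I$ has $\Lambda(I)=\Lambda$: the marked blocks have the (distinct) sizes given by the positive parts of $\Lambda^a$, the unmarked blocks have sizes forming $\Lambda^s$, and distinctness of the parts of $\Lambda^a$ also guarantees that $\Lambda(I)$ is well defined in the first place.

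With the bijection in hand, the count factors. By the classical formula recalled just before the statement, the number of set partitions $\pi$ of $[n]$ with the prescribed block sizes is $\binom{n}{\Lambda^{\!+}}=\frac{n!}{\Lambda^{\!+}!\,(\Lambda^{\!+})^!}$, where a possible zero part of $\Lambda^{\!+}$, coming from a zero in $\Lambda^a$, contributes trivially to both $\Lambda^{\!+}!$ and $(\Lambda^{\!+})^!$. The number of choices for $\phi$ is $\prod_{i=1}^m k_i$, because for each $i\in[m]$ there are exactly $k_i$ blocks of $\pi$ of size $\Lambda_i$ available (and $k_i=1$ when $\Lambda_i=0$, matching the forced block $\{0\}$). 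Multiplying the two factors yields the first equality $\binom{n}{\Lambda}=k_1\cdots k_m\binom{n}{\Lambda^{\!+}}$.

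For the second equality I would use the identity $\Lambda!=\Lambda^{\!+}!$ noted just above, together with Lemma~\ref{037}, which gives $(\Lambda^{\!+})^!=k_1\cdots k_m\,(\Lambda^s)^!$. Substituting into the first equality,
\[k_1\cdots k_m\binom{n}{\Lambda^{\!+}}=k_1\cdots k_m\cdot\frac{n!}{\Lambda!\,k_1\cdots k_m\,(\Lambda^s)^!}=\frac{n!}{\Lambda!\,(\Lambda^s)^!},\]
which is the asserted value of $\binom{n}{\Lambda}$.

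I expect the main obstacle to be the bookkeeping in the bijection when a value $v$ occurs both as a fermionic and as a nonfermionic component of $\Lambda$: then $\pi$ has $k_v>1$ blocks of size $v$, exactly one of which must be promoted, and one must verify that distinct choices really give distinct set superpartitions, that every set superpartition of type $\Lambda$ is obtained, and that the canonical ordering of the blocks of a set superpartition produces no collisions in this identification. A minor technical point, best absorbed into the general argument rather than handled separately, is the degenerate case $\Lambda_m=0$ (which forces the block $\{0\}$ and gives $k_m=1$); there one should simply check that each displayed formula specializes correctly.
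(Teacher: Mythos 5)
Your proof is correct and takes essentially the same route as the paper: both decompose a set superpartition of type $\Lambda$ into its underlying set partition of type $\Lambda^{\!+}$ together with a choice, for each fermionic component $\Lambda_i$, of a block of that size (the empty block when $\Lambda_i=0$) to which $0$ is adjoined, giving the count $k_1\cdots k_m\binom{n}{\,\,\Lambda^{\!+}}$, and then both invoke $\Lambda!=\Lambda^{\!+}!$ and Lemma~\ref{037} for the closed form. The only difference is cosmetic: the paper handles $\Lambda_i=0$ uniformly by choosing blocks from $J\cup\{\emptyset\}$, while you treat it as a separate forced case.
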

\begin{proof}
Observe that every set superpartition of type $\Lambda$ can be obtained from a usual set partition of type $\Lambda^{\!+}$ by appending zeros to some of its blocks. Indeed, if $I$ is a set superpartition of type $\Lambda$, then $J=I^+$ is a set partition of type $\Lambda^{\!+}$. Conversely, if $J$ is a set partition of type $\Lambda^+$, then there are $B_1,\ldots,B_m$ in $J\cup\{\emptyset\}$ such that $|B_i|=\Lambda_i$ for all $i\in[m]$. We get a set superpartition $I=K\backslash\{\emptyset\}$ of type $\Lambda$, where $K$ is obtained from $J\cup\{\emptyset\}$ by replacing $B_i$ by $B_i\cup\{0\}$. The proof follows from the fact that for each $\Lambda_i$ with $i\in[m]$, there are $k_i$ blocks $B_i\in J\cup\{\emptyset\}$ such that $|B_i|=\Lambda_i$ together with Lemma \ref{037}.
\end{proof}

Now, we will study the image of the bases of $\sNCSym$ under the projection $\rho$ onto the algebra of commuting variables $\sSym$. First, we prove that set superpartitions needed to describe $\rho$ on the basis of monomial functions are the ones whose fermionic blocks have all different sizes.

\begin{pro}
Let $I$ be a set superpartition having at least two fermionic blocks $B,B'$ such that $|B|=|B'|$. Then $\rho(m_I)=0$.
\end{pro}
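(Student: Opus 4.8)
The plan is to construct a fixed-point-free, sign-reversing involution on the set of monomials occurring in $m_I$ that survives the projection $\rho$, so that after passing to commuting $x$-variables the terms cancel in pairs. First I would record the setup. Since $I$ is a set superpartition its blocks are pairwise distinct, so the two equal-sized fermionic blocks $B,B'$ cannot both equal $\{0\}$; writing $B=I_i$ and $B'=I_j$ with $i<j$, necessarily $i,j\in[m]$ because the fermionic blocks occupy the first $m$ positions of a partial set supercomposition, and $s:=|B|=|B'|\geq 2$ with $B^{+}$ and $(B')^{+}$ nonempty.

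Next I would define the involution on $M(I)$. Given $u=\theta_{a_1}\cdots\theta_{a_m}x_{b_1}\cdots x_{b_n}\in M(I)$, put $\alpha=a_i$ and $\beta=a_j$; by Definition \ref{005}\eqref{010} these are the common $x$-values attached to $B$ and to $B'$, and Definition \ref{005}\eqref{002} forces $\alpha\neq\beta$. Let $u'$ be the monomial obtained by interchanging the values assigned to $B$ and $B'$: it has $\beta$ in the $i$th fermionic slot, $\alpha$ in the $j$th, $b'_k=\beta$ for $k\in B^{+}$, $b'_k=\alpha$ for $k\in (B')^{+}$, and agrees with $u$ in every other slot. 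I would then check $u'\in M(I)$ directly against conditions \eqref{010} and \eqref{002} of Definition \ref{005}---the blocks of $I$ are pairwise disjoint away from $0$ and $\alpha\neq\beta$, so no new coincidences among the variables are created; it is then clear that $u\mapsto u'$ is an involution of $M(I)$ with no fixed point.

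Then I would compare $\rho(u')$ with $\rho(u)$. Passing to commuting $x$-variables, the blocks $B$ and $B'$ contribute $x_\alpha^{\,s-1}x_\beta^{\,s-1}$ to $\rho(u)$ and $x_\beta^{\,s-1}x_\alpha^{\,s-1}$ to $\rho(u')$, and these are equal precisely because $|B|=|B'|$; every other block contributes identically, so $\rho(u)$ and $\rho(u')$ have the same $x$-part. On the $\theta$-side, $\rho(u')$ is obtained from $\rho(u)$ by exchanging the generators $\theta_\alpha$ and $\theta_\beta$ sitting in slots $i$ and $j$, and since $a_1,\ldots,a_m$ are pairwise distinct (again because $I$ is a set superpartition) this is a single transposition of distinct anticommuting generators, contributing the factor $-1$. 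Hence $\rho(u')=-\rho(u)$, and grouping $M(I)$ into the pairs $\{u,u'\}$ gives $\rho(m_I)=\sum_{u\in M(I)}\rho(u)=0$.

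I expect the only friction to be the bookkeeping: verifying that $u\mapsto u'$ lands back in $M(I)$ (a careful but routine case analysis of Definition \ref{005}) and pinning down the single sign from reordering the anticommuting $\theta$-variables---one must be sure it is a genuine transposition of distinct generators yielding $-1$, not a cancellation $\theta_a^2=0$. Note that the hypothesis $|B|=|B'|$ is used in exactly one place, the identity $x_\alpha^{\,s-1}x_\beta^{\,s-1}=x_\beta^{\,s-1}x_\alpha^{\,s-1}$ of $x$-parts; without it the paired monomials do not cancel. The same argument can be phrased through Proposition \ref{004}: writing $m_I=\sum_{\sigma\in\Sym}\sigma(u)$, using that $\rho$ is equivariant for the permutation action, and pairing $\sigma$ with $\sigma\tau$ where $\tau$ is the transposition exchanging the two indices $u$ assigns to $B$ and $B'$, one finds that each pair cancels because $\tau\bigl(\rho(u)\bigr)=-\rho(u)$ by the same exponent-matching and sign computation.
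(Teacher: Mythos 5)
Your proof is correct and takes essentially the same route as the paper: you pair each monomial of $m_I$ with its image under the transposition exchanging the values attached to the two equal-size fermionic blocks, observe that the $x$-parts coincide after commuting precisely because $|B|=|B'|$ while the $\theta$-parts pick up a sign, and conclude $\rho(m_I)=0$ by cancellation. The paper phrases this via the symmetry of $m_I$ (the transposed monomial also occurs) rather than an explicit involution on $M(I)$, but this is the same argument, as you note yourself in your closing remark.
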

\begin{proof}
Let $k=\ell(I)$, and let $i,j\in[k]$ such that $B,B'$ are the $i$th and $j$th blocks of $I$ respectively. Consider $g\in\{m_I,p_I\}$, and let $r=\theta_{a_1}^{\epsilon_1}\cdots\theta_{a_k}^{\epsilon_k}x_{b_1}\cdots x_{b_n}$ be a monomial occurring in $g$. Then $r=\epsilon\theta_{a_i}\theta_{a_j}\theta_{d_1}\cdots\theta_{d_r}x_{b_1}\cdots x_{b_n}$ for some $\epsilon\in\{\pm1\}$, where $r=\df(I)-2$. Since $g$ is symmetric, the monomial $\sigma(r)=-\epsilon\theta_{a_i}\theta_{a_j}\theta_{d_1}\cdots\theta_{d_r}x_{\sigma(b_1)}\cdots x_{\sigma(b_n)}$ also occurs in $g$, where $\sigma$ is the transposition exchanging $a_i$ with $a_j$. Since $|B|=|B'|$, then $x_{\sigma(b_1)}\cdots x_{\sigma(b_n)}=x_{b_1}\cdots x_{b_n}$ in $\field^{\theta}[x]$. Thus $\rho(\sigma(r))=-\rho(r)$. As it holds for every monomial occurring in $g$, then $\rho(g)=0$.
\end{proof}

\begin{lem}\label{021}
For every $K\in\sC_{n,m}$ and $\delta\in\Sym_m$, we have $\rho(b_{\delta K})=\rho(b_K)$, where $b$ denotes any of the basis elements studied above.
\end{lem}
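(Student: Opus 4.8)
The plan is to prove the identity first for $b=m$ and then to transfer it to the other three bases. For the transfer, observe that the relabelling $\delta$ is an automorphism of the meet--semilattice $(\sC_{n,m},\sqsubseteq)$ — it preserves the strongly coarsening relation, hence commutes with $\sqcap$ and fixes the infimum $\hat{0}_{n,m}$ — and that it preserves the cardinality of every block, so in particular $(\delta K\sqcap\delta L)!=(K\sqcap L)!$. Applying $\delta$ to the expansions of Proposition \ref{030}, Proposition \ref{043} and Proposition \ref{065} and reindexing each sum by $L\mapsto\delta L$ then gives, for every $K\in\sC_{n,m}$,
\[p_{\delta K}=\sum_{L\,\sqsupseteq\,K}m_{\delta L},\qquad e_{\delta K}=\sum_{K\,\sqcap\,L\,=\,\hat{0}_{n,m}}m_{\delta L},\qquad h_{\delta K}=\sum_{L}(K\sqcap L)!\,m_{\delta L}\]
(the case of a trivial $K$ is immediate, as then every function in sight vanishes). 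Since $\rho$ is linear, applying it term by term to these identities reduces the lemma for $p$, $e$ and $h$ to the lemma for $m$.

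For the monomial case I would expand $m_K=\sum_{u\in M(K)}u$ and push $\rho$ inside: a monomial $u=\theta_{a_1}\cdots\theta_{a_m}x_{b_1}\cdots x_{b_n}$ of $M(K)$ maps to $\theta_{a_1}\cdots\theta_{a_m}\prod_{v}x_v^{c_v}$ with $c_v=\#\{j:b_j=v\}$. Unwinding Definition \ref{005}: within each block of $K$ the corresponding entries $b_j$ share a common value, distinct blocks get distinct values, and for every $i\in[m]$ the index $a_i$ equals the value of the $i$th fermionic block of $K$ (or is an arbitrary new index if that block is $\{0\}$); hence the value attached to $K_l$ occurs with multiplicity $|K_l^+|$ among the $x$--indices. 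Consequently $\rho(m_K)\in\field^{\theta}[x]$ is determined entirely by the ordered list $|K_1^+|,\dots,|K_m^+|$ of sizes of the fermionic blocks together with the multiset $\{|K_l^+|:l>m\}$ of sizes of the remaining blocks. As $\delta$ sends the $i$th (fermionic) block $K_i$ to the $i$th (fermionic) block $\delta(K_i)$ of $\delta K$, with $|\delta(K_i)^+|=|K_i^+|$, and preserves the multiset of sizes of the non--fermionic blocks, it follows that $\rho(m_{\delta K})=\rho(m_K)$.

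The step I expect to be the real point is the control of the fermionic part. Since the $\theta$--variables anticommute, $\rho(m_K)$ genuinely depends on the left--to--right order in which the fermionic blocks of $K$ are listed, not merely on the multiset of their sizes, so one must check carefully that $\delta$ acts \emph{within} each block and does not reshuffle the list of fermionic blocks — which is exactly why no sign $(-1)^{\inv(\,\cdot\,)}$ is introduced, in contrast with the $\fcirc$--action, which would produce precisely such a sign. Once this is settled, the whole argument is a routine matching of the conditions defining $M(K)$, $P(K)$, $E(K)$ and $H(K)$ on $K$ with the corresponding conditions on $\delta K$.
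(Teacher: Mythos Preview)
Your argument is correct, but it takes a more roundabout route than the paper. The paper's proof is a two--liner that treats all four bases uniformly: it invokes the identity $b_{\delta\circ K}=\delta\circ b_K$ (immediate from the definitions and \eqref{055}), so that every monomial of $b_{\delta\circ K}$ is obtained from one of $b_K$ by permuting only the \emph{positions} of the $x$--variables; since $\rho$ makes the $x$--variables commute, it annihilates any such permutation and $\rho(b_{\delta\circ K})=\rho(b_K)$ follows at once.

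Your strategy --- settling $b=m$ by a block--size analysis, then reducing $p,e,h$ to $m$ via Propositions \ref{030}, \ref{043}, \ref{065} together with the fact that $\delta$ is an order automorphism of $(\sC_{n,m},\sqsubseteq)$ --- is valid but needs more bookkeeping (compatibility of $\delta$ with $\sqcap$ and $\hat 0_{n,m}$, invariance of $(K\sqcap L)!$, the trivial case). What the paper's route buys is uniformity: it never touches the individual expansions. What yours buys is that it sidesteps the claim $b_{\delta\circ K}=\delta\circ b_K$ and works entirely with the combinatorial change--of--basis formulas, which could be useful if one wished to extend the lemma to further families defined only through their monomial expansions.
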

\begin{proof}
Since $\delta\circ b_K=b_{\delta\circ K}$, then, every monomial occurring in $b_{\delta\circ K}$ can be written as\[\delta\circ(\theta_{a_1}\cdots\theta_{a_m}x_{b_1}\cdots x_{b_n})=\theta_{a_1}\cdots\theta_{a_m}x_{b_{\delta^{-1}(1)}}\cdots x_{b_{\delta^{-1}(n)}},\]where $\theta_{a_1}\cdots\theta_{a_m}x_{b_1}\cdots x_{b_n}$ is a monomial occurring in $b_K$. Now, since $x$--variables become commuting under the action of $\rho$, then $\rho(\theta_{a_1}\cdots\theta_{a_m}x_{b_{\delta^{-1}(1)}}\cdots x_{b_{\delta^{-1}(n)}})=\rho(\theta_{a_1}\cdots\theta_{a_m}x_{b_1}\cdots x_{b_n})$. Thus, $\rho(b_{\delta\circ K})=\rho(b_K)$.
\end{proof}

\begin{pro}\label{039}
For every set superpartition $I$ with fermionic blocks of different sizes, we have
\begin{enumerate}
\item $\rho(m_I)=(-1)^{\varepsilon(I)}(\Lambda(I)^s)^!m_{\Lambda(I)}$.\label{040}
\item $\rho(p_I)=(-1)^{\varepsilon(I)}p_{\Lambda(I)}$.
\item $\rho(e_I)=(-1)^{\varepsilon(I)}\Lambda(I)!e_{\Lambda(I)}$.
\item $\rho(h_I)=(-1)^{\varepsilon(I)}\Lambda(I)!h_{\Lambda(I)}$.
\end{enumerate}
Furthermore, if $J$ is a set superpartition with blocks of the same size, we have $\rho(b_J)=0$, where $b$ is any of the basis elements above.
\end{pro}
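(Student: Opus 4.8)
The plan is to reduce everything, by means of the $\circ$- and $\fcirc$-actions, to a single over product representative for each superpartition type, to treat the monomial basis by a direct expansion, and to obtain the remaining three bases from the multiplicativity of Proposition \ref{042}. By Lemma \ref{021} one has $\rho(b_{\delta\circ K})=\rho(b_K)$ for every $\delta\in\Sym_n$, and by \eqref{020} one has $b_{\sigma\fcirc I}=(-1)^{\inv\sigma}b_I$ for $\sigma\in\Sym_m$. Any two set superpartitions with the same sequence of fermionic block sizes and the same multiset of nonfermionic block sizes are related by the $\circ$-action, simply by relabelling the elements of $[n]$; hence $\rho(b_I)$ is determined by $\Lambda(I)$ up to the sign $(-1)^{\varepsilon(I)}$ produced by sorting the fermionic blocks of $I$ by size. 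Thus it suffices, for each superpartition $\Lambda=(\Lambda_1,\ldots,\Lambda_m;\Lambda_{m+1},\ldots,\Lambda_k)$, to prove the identities when $I=I_\Lambda$ is the over product of the one-block set superpartitions $(\{0,1,\ldots,\Lambda_1\}),\ldots,(\{0,1,\ldots,\Lambda_m\}),(\{1,\ldots,\Lambda_{m+1}\}),\ldots,(\{1,\ldots,\Lambda_k\})$, for which $\Lambda(I_\Lambda)=\Lambda$ and $\varepsilon(I_\Lambda)=0$; the general case then follows by inserting the sign $(-1)^{\varepsilon(I)}$.

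For the vanishing clause I would use a uniform sign argument: if $J$ has two fermionic blocks $J_i,J_j$ of the same size, take $\tau=(i\,j)\in\Sym_m$ and an involution $\delta\in\Sym_n$ interchanging $J_i^+$ with $J_j^+$ and fixing all other elements; then $\delta\circ(\tau\fcirc J)=J$ as collections of sets, so Lemma \ref{021} and \eqref{020} give $\rho(b_J)=\rho(b_{\delta\circ(\tau\fcirc J)})=\rho(b_{\tau\fcirc J})=-\rho(b_J)$, whence $\rho(b_J)=0$. For the monomial basis, a monomial of $M(I)$ is exactly an assignment of pairwise distinct indices to the blocks of $I$; since the $x$-variables commute after $\rho$, it maps to $\theta_{c_1}\cdots\theta_{c_m}\,x_{c_1}^{d_1}\cdots x_{c_k}^{d_k}$ with $d_\ell=|I_\ell|-1$ for $\ell\le m$ and $d_\ell=|I_\ell|$ otherwise, namely to a monomial of $m_\Lambda$, and each such monomial is hit by exactly $(\Lambda^s)^!$ assignments, those obtained by permuting the indices among the nonfermionic blocks of equal size (no permutation can touch a fermionic block without altering the $\theta$-word, and the fermionic blocks have pairwise distinct sizes, so there is no further coincidence); this gives $\rho(m_I)=(\Lambda^s)^!m_\Lambda$.

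For $p$, $e$ and $h$ I would instead use that $\rho$ is an algebra morphism and that these bases are multiplicative for the over product, so that $\rho(b_{I_\Lambda})$ is the product of the $\rho$'s of the one-block functions. Inspecting the defining monomial sets gives, for a single block, $\rho(p_{(\{1,\ldots,r\})})=p_r$, $\rho(e_{(\{1,\ldots,r\})})=r!\,e_r$, $\rho(h_{(\{1,\ldots,r\})})=r!\,h_r$, and $\rho(p_{(\{0,\ldots,k\})})=\tp_k$, $\rho(e_{(\{0,\ldots,k\})})=k!\,\te_k$, $\rho(h_{(\{0,\ldots,k\})})=k!\,\th_k$; multiplying these back together and comparing with the definitions of $p_\Lambda$, $e_\Lambda$, $h_\Lambda$ as products of the $\tp_j$ and $p_j$, of the $\te_j$ and $e_j$, and of the $\th_j$ and $h_j$, respectively, one reads off $\rho(p_I)=p_\Lambda$, $\rho(e_I)=\Lambda!\,e_\Lambda$ and $\rho(h_I)=\Lambda!\,h_\Lambda$, the factor $\Lambda!=\Lambda^a!\,\Lambda^s!$ coming from the $|I_\ell|!$ orderings collapsed inside each block.

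The step I expect to be the main obstacle is the one-block computation for the complete homogeneous case: verifying $\rho(h_{(\{0,\ldots,k\})})=k!\,\th_k$ requires expanding $h_{(\{0,\ldots,k\})}$ through Definition \ref{019} (or Proposition \ref{065}) and showing that the coefficients $(K\sqcap L)!$ reassemble into the weights $(\Lambda_1+1)$ occurring in $\th_k=\sum_{|\Lambda|=k,\,\df(\Lambda)=1}(\Lambda_1+1)m_\Lambda$, which calls for a careful analysis of how a monomial sits inside the block. A secondary point of care is keeping the two multiplicity factors straight — $(\Lambda^s)^!$ for the monomial basis versus $\Lambda!$ for the elementary and homogeneous bases — and making explicit where the hypothesis of pairwise distinct fermionic block sizes is used, namely in guaranteeing that the fermionic part of $\rho(b_I)$ lands on an honest superpartition rather than cancelling against itself.
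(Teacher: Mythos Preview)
Your approach is essentially the same as the paper's: both reduce via Lemma~\ref{021} to a convex (over-product) representative, handle the monomial basis by direct expansion, and derive $p,e,h$ from the multiplicativity of Proposition~\ref{042} together with the one-block identities $\rho(p_{[n]_0})=\tp_n$, $\rho(e_{[n]_0})=n!\te_n$, $\rho(h_{[n]_0})=n!\th_n$ and their nonfermionic analogues. Your uniform vanishing argument via $\delta\circ(\tau\fcirc J)=J$ is a clean addition that covers all four bases at once, whereas the paper treats the monomial case separately in the preceding proposition and leaves the others implicit; and your flagging of the $\th_k$ verification is fair, since the paper simply asserts it.
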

\begin{proof}
Due to Lemma \ref{021}, it is enough to take $I=(I_1,\ldots,I_k)\in\sP_{n,m}$ as a convex set superpartition. 

To prove (1), consider $\Lambda(I)^s=(1^{k_1},\ldots,n^{k_n})$. Since $I$ is convex, every monomial $u\in M(I)$ can be written as\[u=\theta_{a_1}\cdots\theta_{a_m}x_{a_1}^{|I_1|-1}\cdots x_{a_m}^{|I_m|-1}x_{b_{m+1}}^{|I_{m+1}|}\cdots x_{b_k}^{|I_k|}.\]Observe that for every permutation $\sigma$ satisfying $\sigma(\{b_{m+1},\ldots,b_k\})=\{b_{m+1},\ldots,b_k\}$ with $|I_i|=|I_j|$ whenever $\sigma(b_i)=b_j$, and $\sigma(b)=b$ for all $b\not\in\{b_{m+1},\ldots,b_k\}$, we have\[\rho(\theta_{a_1}\cdots\theta_{a_m}x_{a_1}^{|I_1|-1}\cdots x_{a_m}^{|I_m|-1}x_{\sigma(b_{m+1})}^{|I_{m+1}|}\cdots x_{\sigma(b_k)}^{|I_k|})=\rho(u).\]Since, for each $i\in[n]$, there are $k_i$ blocks of size $i$ in $(I_{m+1},\ldots,I_k)$, so there are $k_1!\cdots k_n!=(\Lambda(I)^s)^!$ such permutations. Finally, by sorting the $\theta$--variables, we obtain $\rho(m_I)=(-1)^{\varepsilon(I)}(\Lambda(I)^s)^!m_{\Lambda(I)}$.

To prove (2) to (4) we will use the fact that the bases are multiplicative (Proposition \ref{042}).

Observe that $\rho(p_{[n]})=p_n$ and $\rho(p_{[n]_0})=\tp_n$, where $p_{[n]}=p_{\{[n]\}}$ and $p_{[n]_0}=p_{\{[n]_0\}}$. For $i\in[k]$, let $n_i=|I_i|-1$ if $i\leq m$, and $n_i=|I_i|$ otherwise. Since $I$ is convex and the basis is multiplicative, we have\[\rho(p_I)=\rho(p_{[n_1]_0}\cdots p_{[n_m]_0}p_{[n_{m+1}]}\cdots p_{[n_k]})=\tp_{n_1}\cdots\tp_{n_m}p_{n_{m+1}}\cdots p_{n_k}=(-1)^{\varepsilon(I)}p_{\Lambda(I)}.\]The argument is similar for the remaining bases by observing that\[\rho(e_{[n]})=n!e_n,\qquad\rho(e_{[n]_0})=n!\te_n,\qquad\rho(h_{[n]})=n!h_n,\qquad\rho(h_{[n]_0})=n!\th_n.\]
\end{proof}

We may define a \emph{lifting map} $\tilde{\rho}:\sSym\to\sNCSym$, which result to be a right inverse of $\rho$, as the linear map satisfying
\begin{equation}\label{061}\tilde{\rho}(m_{\Lambda})=\frac{\Lambda!}{n!}\sum_{\Lambda(I)=\Lambda}(-1)^{\varepsilon(I)}m_I.\end{equation}

\begin{pro}\label{062}
The composition $\rho\circ\tilde{\rho}$ is the identity map on $\sSym$.
\end{pro}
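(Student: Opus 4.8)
The plan is to apply the projection $\rho$ directly to the defining formula \eqref{061} and simplify using the two preparatory results already established: the projection formula for monomial functions and the enumeration of set superpartitions of a fixed type. Since $\{m_\Lambda\}$, as $\Lambda$ ranges over all superpartitions, is a basis of $\sSym$, it suffices to verify $\rho(\tilde\rho(m_\Lambda)) = m_\Lambda$ for an arbitrary superpartition $\Lambda$ of degree $n$, and then extend by linearity.

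First I would expand, using linearity of $\rho$,
\[\rho(\tilde\rho(m_\Lambda)) = \frac{\Lambda!}{n!}\sum_{\Lambda(I)=\Lambda}(-1)^{\varepsilon(I)}\rho(m_I).\]
Every set superpartition $I$ occurring in this sum has $\Lambda(I) = \Lambda$, so its fermionic blocks have pairwise distinct sizes (because $\Lambda^a$ has distinct parts); hence Proposition~\ref{039}\eqref{040} applies to each summand and gives $\rho(m_I) = (-1)^{\varepsilon(I)}(\Lambda^s)^!\,m_\Lambda$. The two sign factors cancel, and the sum collapses to
\[\rho(\tilde\rho(m_\Lambda)) = \frac{\Lambda!\,(\Lambda^s)^!}{n!}\Biggl(\sum_{\Lambda(I)=\Lambda}1\Biggr)m_\Lambda = \frac{\Lambda!\,(\Lambda^s)^!}{n!}\binom{n}{\Lambda}\,m_\Lambda,\]
where the last equality is just the definition of $\binom{n}{\Lambda}$ as the number of set superpartitions of type $\Lambda$. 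Substituting the closed form $\binom{n}{\Lambda} = n!/(\Lambda!\,(\Lambda^s)^!)$ from Lemma~\ref{038} cancels the prefactor and yields $\rho(\tilde\rho(m_\Lambda)) = m_\Lambda$.

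I do not expect a genuine obstacle; the argument is essentially bookkeeping. The one point that must be handled with care is that the normalization constant in \eqref{061} be exactly the reciprocal of the number of set superpartitions of type $\Lambda$ — this is why Lemma~\ref{038} is needed with its precise value $n!/(\Lambda!(\Lambda^s)^!)$, rather than the naive multinomial coefficient — and one should note explicitly that Proposition~\ref{039}\eqref{040} is applicable to every term of the sum, since distinctness of the fermionic block sizes is forced by the condition $\Lambda(I) = \Lambda$.
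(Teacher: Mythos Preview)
Your proof is correct and follows essentially the same route as the paper: expand $\rho(\tilde\rho(m_\Lambda))$ using \eqref{061}, apply Proposition~\ref{039}\eqref{040} termwise so the signs cancel, and then use the count from Lemma~\ref{038} to collapse the prefactor to~$1$. Your explicit remark that Proposition~\ref{039}\eqref{040} applies because $\Lambda(I)=\Lambda$ forces distinct fermionic block sizes is a welcome clarification that the paper leaves implicit.
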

\begin{proof}
By \eqref{040} of Proposition \ref{039} together with Lemma \ref{037} and Lemma \ref{038}, we have\[\rho(\tilde{\rho}(m_{\Lambda}))=\frac{\Lambda!}{n!}\sum_{\Lambda(I)=\Lambda}(-1)^{\varepsilon(I)}\rho(m_I)=\frac{\Lambda!}{n!}\sum_{\Lambda(I)=\Lambda}(\Lambda(I)^s)^!m_{\Lambda(I)}=\frac{\Lambda!}{n!}\binom{n}{\Lambda}(\Lambda(I)^s)^!m_{\Lambda}=m_{\Lambda}.\]Therefore $\rho\circ\tilde{\rho}$ is the identity map on $\sSym$.
\end{proof}

\subsection{Inner product}\label{078}

It was shown in \cite{DeLaMa06} that $\sSym$ can be endowed with an inner product $\ipro{\,,}$ satisfying $\langle m_{\Lambda},h_{\Omega}\rangle=(-1)^{\binom{m}{2}}\delta_{\Lambda,\Omega}$, where $\Lambda,\Omega$ are superpartitions, $m=\df(\Lambda)$ and $\delta$ is the Dirac delta function.

Here we extend the inner product of $\NCSym$ \cite{RoSa06} to superspace and show that, with respect to this inner product, $\tilde{\rho}$ is an isometry from $\sNCSym$ to $\sSym$.

Let $\ipro{\,,}$ be the bilinear form on $\sNCSym$ satisfying $\ipro{m_I,h_J}=(-1)^{\binom{m}{2}}n!\delta_{I,J}$, where $I,J$ are set superpartitions and $(n,m)$ is the bidegree of $I$.

\begin{lem}\label{033}
For every $I,J\in\sP_{n,m}$ and $\sigma\in\Sym_m$, we have $(I\wedge_{\sigma}J)!=(J\wedge_{\sigma^{-1}}I)!$.
\end{lem}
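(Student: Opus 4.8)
The statement $(I\wedge_{\sigma}J)! = (J\wedge_{\sigma^{-1}}I)!$ asks for an equality of products of factorials of block sizes, so the plan is to exhibit a bijection between the blocks of $I\wedge_{\sigma}J$ and the blocks of $J\wedge_{\sigma^{-1}}I$ that preserves cardinality. I would unpack both sides using the explicit description of the meet given right after Corollary~\ref{031}: writing $I\wedge_{\sigma}J=(N_1,\ldots,N_q)$ with $N_i = I_i\cap J_{\sigma(i)}$ for $i\in[m]$ and $(N_{m+1},\ldots,N_q)=(I^+\backslash A)\wedge(J^+\backslash A)$ where $A=N_1\cup\cdots\cup N_m$, and similarly $J\wedge_{\sigma^{-1}}I=(N'_1,\ldots,N'_{q'})$ with $N'_i = J_i\cap I_{\sigma^{-1}(i)}$ for $i\in[m]$ and $(N'_{m+1},\ldots,N'_{q'})=(J^+\backslash A')\wedge(I^+\backslash A')$ with $A'=N'_1\cup\cdots\cup N'_m$.

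First I would handle the fermionic blocks. For $i\in[m]$, set $j=\sigma(i)$, so $i=\sigma^{-1}(j)$; then $N_i = I_i\cap J_{\sigma(i)} = I_i\cap J_j$ and $N'_j = J_j\cap I_{\sigma^{-1}(j)} = J_j\cap I_i$, hence $N_i = N'_j$ and in particular $|N_i|=|N'_{\sigma(i)}|$. So $i\mapsto\sigma(i)$ is a cardinality-preserving bijection between the first $m$ blocks of the two sides, and consequently the multisets $\{N_1,\ldots,N_m\}$ and $\{N'_1,\ldots,N'_m\}$ coincide; in particular $A = A'$. This already shows the fermionic factors of the two products agree.

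Next, with $A=A'$ in hand, the nonfermionic parts are literally $(I^+\backslash A)\wedge(J^+\backslash A)$ and $(J^+\backslash A)\wedge(I^+\backslash A)$ — the same pair of set partitions of $[n]\backslash A^+$ meet-operated in the opposite order. Since $\wedge$ is the meet in the lattice of set partitions, it is commutative, so these two collections are identical, hence have the same multiset of block sizes. Combining the fermionic and nonfermionic contributions gives $(I\wedge_{\sigma}J)! = I_1\cdots$-type product $= (J\wedge_{\sigma^{-1}}I)!$, where by $K!$ for a collection $K=(K_1,\ldots,K_r)$ I mean $|K_1|!\cdots|K_r|!$ as defined before Proposition~\ref{065}.

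**Anticipated obstacle.** There is essentially no analytic or combinatorial difficulty; the only thing to be careful about is bookkeeping the index shift $\sigma\leftrightarrow\sigma^{-1}$ correctly and making sure the definition of $A$ uses the fermionic blocks \emph{as multisets} (some $N_i$ could be $\{0\}$ and several could even repeat as sets), so that the claim $A=A'$ is genuinely an equality of unions and not merely of one-element-deep data. Once that is pinned down, invoking commutativity of $\wedge$ closes the argument. I would therefore write the proof as: (i) the matching $N_i=N'_{\sigma(i)}$ for $i\in[m]$, giving $A=A'$; (ii) commutativity of $\wedge$ for the remaining blocks; (iii) conclude equality of the factorial products.
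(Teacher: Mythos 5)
Your proof is correct and follows essentially the same route as the paper: you observe that the $i$th fermionic block $I_i\cap J_{\sigma(i)}$ of $I\wedge_\sigma J$ equals the $\sigma(i)$th fermionic block of $J\wedge_{\sigma^{-1}}I$, deduce that the unions $A$ and $A'$ coincide, and conclude that the nonfermionic parts agree by commutativity of $\wedge$, exactly as in the paper's argument. No gaps; your extra care about repeated blocks and the index bookkeeping only makes explicit what the paper leaves implicit.
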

\begin{proof}
Let $(K_1,\ldots,K_p)=I\wedge_{\sigma}J$, and let $(K_1',\ldots,K_q')=J\wedge_{\sigma^{-1}}I$. By definition, we have $K'_{\sigma(i)}=J_{\sigma(i)}\cap I_{\sigma^{-1}(\sigma(i))}=J_{\sigma(i)}\cap I_i=I_i\cap J_{\sigma(i)}=K_i$. This implies that $K_1\cup\cdots\cup K_m=K_1'\cup\cdots\cup K_m'=:A$. Hence, $(K_{m+1},\ldots,K_p)=(K_{m+1}',\ldots,K_q')=(I\backslash A)\wedge(J\backslash A)$ with $p=q$. Therefore $(I\wedge_{\sigma}J)!=(J\wedge_{\sigma^{-1}}I)!$.
\end{proof}

\begin{pro}\label{032}
Let $I,J$ be two set superpartitions of bidegree $(n,m)$. Then\[\ipro{h_I,h_J}=n!\sum_{\sigma\in\Sym_m}(I\wedge_{\sigma}J)!(-1)^{\inv(\sigma)}\]Furthermore, $\ipro{h_I,h_J}=\ipro{h_J,h_I}$.
\end{pro}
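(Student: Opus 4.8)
The plan is to evaluate $\ipro{h_I,h_J}$ by expanding the first factor in the monomial basis and then invoking the defining relation of the inner product. Write $(n,m)$ for the common bidegree of $I$ and $J$. By Corollary~\ref{031},
\[h_I=\sum_{(\sigma,K)\in\Sym_m\times\sP_{n,m}}(I\wedge_{\sigma}K)!\,(-1)^{\inv(\sigma)}\,m_K,\]
and every set superpartition $K$ occurring in this sum has bidegree exactly $(n,m)$. Hence bilinearity of $\ipro{\,,}$ together with $\ipro{m_K,h_J}=(-1)^{\binom{m}{2}}n!\,\delta_{K,J}$ annihilates all terms with $K\neq J$ and leaves
\[\ipro{h_I,h_J}=n!\sum_{\sigma\in\Sym_m}(I\wedge_{\sigma}J)!\,(-1)^{\inv(\sigma)},\]
which is the first assertion (the pairing contributing only a global normalizing factor).

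For the symmetry statement I would run the same expansion starting from $h_J$, obtaining $\ipro{h_J,h_I}=n!\sum_{\tau\in\Sym_m}(J\wedge_{\tau}I)!\,(-1)^{\inv(\tau)}$, and then match it term by term against the formula for $\ipro{h_I,h_J}$. Lemma~\ref{033}, applied with the roles of $I$ and $J$ exchanged, gives $(J\wedge_{\tau}I)!=(I\wedge_{\tau^{-1}}J)!$ for every $\tau\in\Sym_m$. Substituting $\sigma=\tau^{-1}$ — a bijection of $\Sym_m$ with $\inv(\tau^{-1})=\inv(\tau)$ — turns $\sum_{\tau}(J\wedge_{\tau}I)!\,(-1)^{\inv(\tau)}$ into $\sum_{\sigma}(I\wedge_{\sigma}J)!\,(-1)^{\inv(\sigma)}$, so $\ipro{h_J,h_I}=\ipro{h_I,h_J}$.

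I do not expect a genuine obstacle: the proposition is a bookkeeping consequence of Corollary~\ref{031} and Lemma~\ref{033}, with no new combinatorial input required. The two points that need attention are (i) checking that the expansion index $K$ ranges over set superpartitions of bidegree precisely $(n,m)$, so the pairing relation applies with the correct $n!$ and sign and the sum collapses cleanly to a sum over $\Sym_m$ alone; and (ii) in the symmetry argument, using the invariance of the inversion number under inversion, $\inv(\tau^{-1})=\inv(\tau)$, so that the reindexing $\tau\mapsto\tau^{-1}$ does not disturb the sign $(-1)^{\inv(\tau)}$.
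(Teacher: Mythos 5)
Your proof is correct and follows essentially the same route as the paper: expand $h_I$ via Corollary \ref{031}, collapse the sum using bilinearity and $\ipro{m_K,h_J}=(-1)^{\binom{m}{2}}n!\,\delta_{K,J}$, and deduce symmetry from Lemma \ref{033} together with the fact that $\inv(\tau^{-1})=\inv(\tau)$. The only point to note is that the pairing literally produces the prefactor $(-1)^{\binom{m}{2}}n!$ rather than $n!$; this global sign, which you absorb as a ``normalizing factor,'' is exactly the same discrepancy that already exists between the proposition's statement and the final display of the paper's own proof.
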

\begin{proof}
Due to Corollary \ref{031}, we have\[\ipro{h_I,h_J}=\ipro{\sum_{(\sigma,\,K)}(I\wedge_{\sigma}K)!(-1)^{\inv(\sigma)}m_K,h_J}=\sum_{(\sigma,K)}(I\wedge_{\sigma}K)!(-1)^{\inv(\sigma)}\ipro{m_K,h_J},\]where $\sigma\in\Sym_m$ and $J\in\sP_n$. Then, because of $\ipro{m_K,h_J}=(-1)^{\binom{m}{2}}n!\delta_{K,\,J}$, we obtain the result\[\ipro{h_I,h_J}=\sum_{(\sigma,K)}(I\wedge_{\sigma}K)!(-1)^{\inv(\sigma)}(-1)^{\binom{m}{2}}n!\delta_{K,\,J}=(-1)^{\binom{m}{2}}n!\sum_{\sigma\in\Sym_m}(I\wedge_{\sigma}K)!(-1)^{\inv(\sigma)}.\]Lemma \ref{033} and the fact that $\inv(\sigma)$ and $\inv(\sigma^{-1})$ have the same parity imply that $\ipro{h_I,h_J}=\ipro{h_J,h_I}$.
\end{proof}

\begin{pro}\label{036}
Let $I,J$ be two set superpartitions of bidegree $(n,m)$. Then\[\ipro{\,p_I,p_J}=\frac{(-1)^{\binom{m}{2}}n!}{|\mu(\hat{0}_{n,m},I)|}\delta_{I,J}.\]
\end{pro}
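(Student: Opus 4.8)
The plan is to mimic the strategy used for $\langle h_I, h_J\rangle$ in Proposition~\ref{032}, now pairing $p_I$ against $h_J$. First I would expand $p_I$ in the monomial basis using Corollary~\ref{014}, obtaining
\[
\ipro{p_I,p_J}=\ipro{\sum_{K\succeq I}(-1)^{\inv(I,K)}m_K,\,p_J}
=\sum_{K\succeq I}(-1)^{\inv(I,K)}\ipro{m_K,p_J}.
\]
So the computation reduces to evaluating $\ipro{m_K,p_J}$ for a set superpartition $K$ with $K\succeq I$. The natural route is to express $p_J$ in the homogeneous basis (for which the pairing against $m_K$ is diagonal by definition) via the inversion of Corollary~\ref{031}, i.e. through the formula in Proposition~\ref{035} relating $p$ to $h$; alternatively one expands $p_J$ directly in $m$'s by Corollary~\ref{014} and then re-expresses each $m$ by the defining pairing. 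Either way, using $\ipro{m_K,h_L}=(-1)^{\binom{m}{2}}n!\,\delta_{K,L}$ collapses the double sum.

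Concretely, I would write $p_J=\sum_{L\succeq J}(-1)^{\inv(J,L)}m_L$ is not directly useful for the pairing, so instead invert: from Proposition~\ref{035}, $p_J=\frac{1}{|\mu(\hat 0_{n,m},J)|}\sum_{K\sqsubseteq J}\mu(K,J)h_K$. Substituting into $\ipro{m_K,p_J}$ and using the defining relation gives
\[
\ipro{m_K,p_J}=\frac{1}{|\mu(\hat 0_{n,m},J)|}\sum_{N\sqsubseteq J}\mu(N,J)\ipro{m_K,h_N}
=\frac{(-1)^{\binom{m}{2}}n!}{|\mu(\hat 0_{n,m},J)|}\,\mu(K,J)\,[K\sqsubseteq J].
\]
Plugging this back in,
\[
\ipro{p_I,p_J}=\frac{(-1)^{\binom{m}{2}}n!}{|\mu(\hat 0_{n,m},J)|}
\sum_{\substack{K\succeq I\\ K\sqsubseteq J}}(-1)^{\inv(I,K)}\mu(K,J).
\]
It then remains to show this inner sum equals $|\mu(\hat 0_{n,m},J)|\,\delta_{I,J}$, equivalently $\mu(\hat 0_{n,m},I)\,\delta_{I,J}$ up to the sign $(-1)^I$; the symmetry $K\succeq I$ together with $K\sqsubseteq J$ forces, via Remark~\ref{041} and the fact that nontrivial partial set supercompositions above $I$ in $\sqsubseteq$ correspond to coarsenings of $I$ in $\preceq$, that the sum telescopes through the Möbius recursion \eqref{044} on $[\hat 0_{n,m},J]$, vanishing unless $I=J$.

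The main obstacle I anticipate is the bookkeeping in that last step: the two partial orders $\preceq$ and $\sqsubseteq$ interact through the $\fcirc$-action of $\Sym_m$ on fermionic blocks (Remark~\ref{041}), so one must carefully match the sign $(-1)^{\inv(I,K)}$ coming from the $\preceq$-expansion with the permutation-twisted strong-coarsening intervals, and verify that after accounting for $p_{\sigma\fcirc I}=(-1)^{\inv(\sigma)}p_I$ from \eqref{020} the remaining sum is genuinely a Möbius sum over a single interval $[\,\cdot\,,J]$ in $(\sC_{n,m},\sqsubseteq)$ restricted to nontrivial elements. Once the index sets are identified correctly, the vanishing for $I\neq J$ is immediate from \eqref{044} and the normalization $\mu(\hat 0_{n,m},I)=(-1)^I|\mu(\hat 0_{n,m},I)|$ from \eqref{045} handles the diagonal term.
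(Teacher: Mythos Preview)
Your approach is exactly the paper's: expand $p_I$ in monomials via Corollary~\ref{014} and $p_J$ in homogeneous functions via Proposition~\ref{035}, pair using the defining relation, and kill the off-diagonal terms with the M\"{o}bius recursion over an interval in $(\sC_{n,m},\sqsubseteq)$, with Remark~\ref{041} translating between $\preceq$ and $\sqsubseteq$.

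One arithmetic slip to fix: the inner sum must collapse to $\delta_{I,J}$, not to $|\mu(\hat 0_{n,m},J)|\,\delta_{I,J}$. The prefactor $\dfrac{(-1)^{\binom{m}{2}}n!}{|\mu(\hat 0_{n,m},J)|}$ already supplies the denominator in the statement, so when $I=J$ the only surviving summand is $K=I=J$ with $\mu(J,J)=1$ and $\inv(I,I)=0$. Also, in your expression $\mu(K,J)\,[K\sqsubseteq J]$ the index $N$ from Proposition~\ref{035} ranges over partial set supercompositions, so the nonvanishing condition is really $\bar N=K$, i.e.\ $N=\tau\fcirc K$ for a unique $\tau$, contributing an extra $(-1)^{\inv(\tau)}$; this is precisely the sign-and-order bookkeeping you anticipate. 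The paper disposes of it by a short case split ($I\succ J$; $I,J$ incomparable; $I\prec J$), each reducing to the M\"{o}bius identity on the interval $[\sigma\fcirc I,\,J]$.
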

\begin{proof}
Due to Corollary \ref{014} and Proposition \ref{035}, we have
\[p_I=\sum_{K\succeq I}m_K,\qquad p_J=\frac{1}{|\mu(\hat{0}_{n,m},J)|}\sum_{L\preceq J}\mu(L,J)h_L,\]
where $K,L$ are nontrivial partial set supercompositions of bidegree $(n,m)$. Then,\[\ipro{\,p_I,p_J}=\frac{1}{|\mu(\hat{0}_{n,m},J)|}\sum_{K\succeq I}\sum_{L\preceq J}\mu(L,J)\ipro{m_K,h_L}.\]Since $K,L$ are nontrivial, then $K=(\sigma,\bar{K})$ and $L=(\tau,\bar{L})$ for some $\sigma,\tau\in\Sym_m$. Thus\[\ipro{m_K,h_L}=(-1)^{\inv(\sigma)+\inv(\tau)}\ipro{m_{\bar{K}},h_{\bar{L}}}.\]We will distinguish four cases. If $I\succ J$ as set superpartitions, then $\bar{L}\preceq J\prec I\preceq\bar{K}$. Thus $\ipro{m_K,h_L}=0$. Therefore $\ipro{\,p_I,p_J}=0$.

If $I,J$ are not comparable as set superpartitions, either $\bar{K}$ is not comparable with $J$, or $\bar{K}\succ J$, otherwise $I\preceq\bar{K}\preceq J$, which is a contradiction. Thus, $\mu(K,J)h_K$ is not an addend of $p_J$, which implies $\ipro{\,p_I,p_J}=0$.

Now, if $I\prec J$ as set superpartitions, Remark \ref{041} implies that $I\prec(\alpha^{-1},J)$, as partial set supercomposition, for some $\alpha\in\Sym_m$.\[\begin{array}{rcl}
\ipro{\,p_I,p_J}&=&{\displaystyle\frac{1}{|\mu(\hat{0}_{n,m},J)|}\sum_{(\alpha,\,I)\preceq L\preceq J}\mu(L,J)\ipro{m_{(\alpha^{-1},L)},h_L}},\\[0.5cm]
&=&{\displaystyle\frac{1}{|\mu(\hat{0}_{n,m},J)|}\sum_{(\alpha,\,I)\preceq L\preceq J}(-1)^{\inv(\alpha)}\mu(L,J)\ipro{m_L,h_L}},\\[0.5cm]
&=&{\displaystyle\frac{(-1)^{\inv(\alpha)}(-1)^{\binom{m}{2}}n!}{|\mu(\hat{0}_{n,m},J)|}\sum_{(\alpha,I)\preceq L\preceq J}\mu(L,J)=0.}
\end{array}\]Therefore ${\displaystyle\ipro{\,p_I,p_J}=\frac{(-1)^{\binom{m}{2}}n!}{|\mu(\hat{0}_{n,m},I)|}}\delta_{I,J}$.
\end{proof}

\begin{lem}\label{064}
For every $K,L\in\sC_{n,m}$ and $(\sigma,\tau)\in\Sym_n\times\Sym_m$, we have $(K\sqcap L)!=\left(\tau\fcirc\sigma(K)\,\sqcap\,\tau\fcirc\sigma(L)\right)!$.
\end{lem}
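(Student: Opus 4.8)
The plan is to reduce the statement to the observation that the number $A!=|A_1|!\cdots|A_k|!$ attached to a tuple of finite sets depends only on the multiset of cardinalities of its members, and then to check that $K\sqcap L$ and $\tau\fcirc\sigma(K)\sqcap\tau\fcirc\sigma(L)$ carry the same such multiset. Since $\tau\fcirc\sigma(K)=\tau\fcirc(\sigma\circ K)$, I would handle the two symmetries one at a time: first prove $(\sigma(K)\sqcap\sigma(L))!=(K\sqcap L)!$ for $\sigma\in\Sym_n$, then prove $((\tau\fcirc M)\sqcap(\tau\fcirc M'))!=(M\sqcap M')!$ for $\tau\in\Sym_m$; composing these with $M=\sigma(K)$ and $M'=\sigma(L)$ yields the claim.

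For the $\sigma$-part I would regard $\sigma$ as a permutation of $[n]_0$ fixing $0$, hence a bijection; as such it commutes with $\cap$, $\cup$, set difference, and with the meet $\wedge$ of set partitions of subsets of $[n]$, and moreover $\sigma(K)^+=\sigma(K^+)$ because $\sigma$ fixes $0$. Feeding this into the description of $\sqcap$ in Proposition~\ref{022}: the $i$th block of $\sigma(K)\sqcap\sigma(L)$ for $i\in[m]$ is $\sigma(K_i)\cap\sigma(L_i)=\sigma(K_i\cap L_i)$, the set $A$ appearing there becomes $\sigma(A)$, and the non-fermionic blocks become $(\sigma(K)^+\setminus\sigma(A))\wedge(\sigma(L)^+\setminus\sigma(A))=\sigma\bigl((K^+\setminus A)\wedge(L^+\setminus A)\bigr)$. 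Thus every block of $\sigma(K)\sqcap\sigma(L)$ is the $\sigma$-image of the corresponding block of $K\sqcap L$, and $|\sigma(B)|=|B|$, so the two meets have equal block-size multisets. One point deserves a line of care: applying $\circ$ may disturb the min-ordering of the non-fermionic blocks required in the definition of $\sC_{n,m}$, so before invoking Proposition~\ref{022} one re-sorts those blocks; this merely permutes the tuple and changes neither the collection $K\sqcap L$ nor its multiset of block sizes.

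For the $\tau$-part, note that $\tau\fcirc M$ only reorders the fermionic blocks $M_1,\dots,M_m$ by $\tau$ and leaves the non-fermionic blocks — hence the underlying set partition $M^+$, as an unordered collection — unchanged. Applying Proposition~\ref{022} again: the $i$th block of $(\tau\fcirc M)\sqcap(\tau\fcirc M')$ is $M_{\tau(i)}\cap M'_{\tau(i)}$ for $i\in[m]$, the set $A$ there equals $\bigcup_{i\in[m]}M_{\tau(i)}\cap M'_{\tau(i)}=\bigcup_{i\in[m]}M_i\cap M'_i$ independently of $\tau$, and the non-fermionic part $((\tau\fcirc M)^+\setminus A)\wedge((\tau\fcirc M')^+\setminus A)$ likewise does not depend on $\tau$. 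Hence $(\tau\fcirc M)\sqcap(\tau\fcirc M')$ agrees with $M\sqcap M'$ up to a permutation of the first $m$ blocks, so it has the same block-size multiset and $((\tau\fcirc M)\sqcap(\tau\fcirc M'))!=(M\sqcap M')!$. Chaining the two parts gives $(\tau\fcirc\sigma(K)\sqcap\tau\fcirc\sigma(L))!=(\sigma(K)\sqcap\sigma(L))!=(K\sqcap L)!$, which is the assertion. I do not anticipate a real obstacle: the argument is bookkeeping around Proposition~\ref{022}, and the only slightly delicate points are the commutation of $\sigma$ with the set-partition meet $\wedge$ and the harmless re-sorting of non-fermionic blocks mentioned above.
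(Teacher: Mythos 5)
Your proposal is correct and follows essentially the same route as the paper: both arguments use the explicit description of $\sqcap$ from Proposition~\ref{022} and the fact that the bijections $\sigma$ and $\tau$ preserve block cardinalities, so the block-size multiset of the meet is unchanged. Splitting the argument into a $\sigma$-step and a $\tau$-step (and noting the harmless re-sorting of nonfermionic blocks) is only an organizational refinement of the paper's single combined computation.
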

\begin{proof}
Write $K=(K_1,\ldots,K_h)$ and $L=(L_1,\ldots,L_k)$. Let $A=(K_1\cap L_1)\cup\cdots\cup(K_m\cap L_m)$, and let $(K^+\backslash A)\wedge(L^+\backslash A))=(N_{m+1},\ldots,N_q)$, then $(\sigma(K)^+\backslash\sigma(A))\wedge(\sigma(L)^+\backslash\sigma(A)))=(\sigma(N_{m+1}),\ldots,\sigma(N_q))$. Since $\sigma,\tau$ are bijections, then $|K_i\cap L_i|=|\sigma(K_{\tau(i)})\cap\sigma(L_{\tau(i)})|$ for all $i\in[m]$, and $|\sigma(N_i)|=|N_i|$ for all $i\in[m+1,q]$. So, Proposition \ref{022} implies that $(K\sqcap L)!=\left(\tau\fcirc\sigma(K)\,\sqcap\,\tau\fcirc\sigma(L)\right)!$.
\end{proof}

\begin{pro}\label{063}
The bilinear form $\ipro{\,,}$ is an inner product on $\sNCSym$. Furthermore, with this inner product, the lifting map $\tilde{\rho}$ is an isometry from $\sNCSym$ to $\sSym$.
\end{pro}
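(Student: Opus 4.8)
The plan is to establish the three defining properties of an inner product --- bilinearity, symmetry and nondegeneracy --- and then to prove the isometry statement through an adjunction between $\rho$ and $\tilde\rho$. Bilinearity holds by definition. Throughout I use the defining identity $\ipro{m_K,h_J}=(-1)^{\binom m2}n!\,\delta_{K,J}$, the facts that $\{m_I:I\in\sP_{n,m}\}$ (Proposition~\ref{006}) and $\{h_I:I\in\sP_{n,m}\}$ (the theorem of Section~\ref{047}) are bases of $\sNCSym_{n,m}$, and the decomposition $\sNCSym=\bigoplus_{n,m}\sNCSym_{n,m}$.

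First I would observe that $\ipro{\,,}$ is block diagonal: if $f\in\sNCSym_{n,m}$ and $g\in\sNCSym_{n',m'}$ with $(n,m)\neq(n',m')$, then writing $g$ in the homogeneous basis and $f$ in the monomial basis and applying the defining identity gives $\ipro{f,g}=0$, since no set superpartition of bidegree $(n,m)$ equals one of bidegree $(n',m')$. Hence it suffices to argue inside each $\sNCSym_{n,m}$. There, symmetry follows from Proposition~\ref{032} (the identity $\ipro{h_I,h_J}=\ipro{h_J,h_I}$ on the basis $\{h_I\}$, extended bilinearly), and nondegeneracy is immediate because the Gram matrix of $\{m_I\}$ against $\{h_I\}$ is $(-1)^{\binom m2}n!$ times the identity: if $0\neq f=\sum_I c_Im_I$ then $\ipro{f,h_{I_0}}=(-1)^{\binom m2}n!\,c_{I_0}\neq 0$ for any $I_0$ with $c_{I_0}\neq 0$. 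This shows $\ipro{\,,}$ is an inner product.

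For the isometry, the key step is the adjunction
\[\ipro{\tilde\rho(u),v}_{\sNCSym}=\ipro{u,\rho(v)}_{\sSym}\qquad\text{for all }u\in\sSym,\ v\in\sNCSym,\]
which by bilinearity need only be checked for $u=m_\Lambda$ and $v=h_J$. On the left, \eqref{061} writes $\tilde\rho(m_\Lambda)$ as $\tfrac{\Lambda!}{n!}\sum_{\Lambda(I)=\Lambda}(-1)^{\varepsilon(I)}m_I$, and pairing against $h_J$ via the defining identity isolates the single term $I=J$, which occurs precisely when $\Lambda(J)=\Lambda$; this yields $(-1)^{\varepsilon(J)+\binom m2}\Lambda!$ in that case and $0$ otherwise (in particular $0$ when $J$ has two fermionic blocks of equal size, as then $J$ is absent from the sum). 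On the right, Proposition~\ref{039} gives $\rho(h_J)=(-1)^{\varepsilon(J)}\Lambda(J)!\,h_{\Lambda(J)}$ when the fermionic blocks of $J$ have pairwise distinct sizes and $\rho(h_J)=0$ otherwise, so $\ipro{m_\Lambda,\rho(h_J)}_{\sSym}=(-1)^{\varepsilon(J)}\Lambda(J)!\,(-1)^{\binom m2}\delta_{\Lambda,\Lambda(J)}$; since $\Lambda(J)=\Lambda$ forces $\Lambda(J)!=\Lambda!$ and $\df(\Lambda)=m$, the two expressions coincide in every case. Granting the adjunction, for $f,g\in\sSym$ one gets $\ipro{\tilde\rho(f),\tilde\rho(g)}_{\sNCSym}=\ipro{f,\rho(\tilde\rho(g))}_{\sSym}=\ipro{f,g}_{\sSym}$ by Proposition~\ref{062}; thus $\tilde\rho$ preserves the inner products, and being a right inverse of $\rho$ it is injective, so it realizes $\sSym$ as a subspace of $\sNCSym$ on which $\ipro{\,,}$ restricts to the inner product of $\sSym$ (equivalently, $\rho$ restricts to an isometry from $\tilde\rho(\sSym)$ onto $\sSym$).

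The only genuinely computational point --- the main obstacle --- is the case analysis in the adjunction: one must track the scalars $\Lambda!$ and $n!$ and the signs $(-1)^{\varepsilon(J)}$, $(-1)^{\binom m2}$, and must separate the set superpartitions $J$ whose fermionic blocks are not of pairwise distinct sizes, for which both sides of the adjunction vanish. It is worth noting that this step needs neither Lemma~\ref{037} nor Lemma~\ref{038}: the Kronecker delta in the definition of $\ipro{\,,}$ collapses the sum defining $\tilde\rho(m_\Lambda)$ to a single term, so no count of the fibre $\{I:\Lambda(I)=\Lambda\}$ is required here.
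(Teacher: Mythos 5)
Your proof is correct, and the isometry half takes a genuinely different route from the paper. The paper's proof first computes $\tilde{\rho}(h_\Lambda)$ explicitly: it introduces $H_\Lambda=\sum_{\Lambda(I)=\Lambda}(-1)^{\varepsilon(I)}h_I$, shows $\rho(H_\Lambda)=\frac{n!}{(\Lambda^s)^!}h_\Lambda$ using Proposition \ref{039} and Lemma \ref{038}, then uses Proposition \ref{065} and Lemma \ref{064} to argue that $H_\Lambda$ lies in the image of $\tilde{\rho}$, concludes $\tilde{\rho}(h_\Lambda)=\frac{(\Lambda^s)^!}{n!}H_\Lambda$ via Proposition \ref{062}, and only then verifies $\ipro{\tilde{\rho}(m_\Omega),\tilde{\rho}(h_\Lambda)}=\ipro{m_\Omega,h_\Lambda}$ by direct computation. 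You instead prove the adjunction $\ipro{\tilde{\rho}(u),v}=\ipro{u,\rho(v)}$ on the basis pair $(m_\Lambda,h_J)$ --- which needs only the defining identities of the two forms, \eqref{061}, and Proposition \ref{039} (including its vanishing clause for repeated fermionic block sizes) --- and then obtain the isometry formally from $\rho\circ\tilde{\rho}=\id$ (Proposition \ref{062}); your sign and scalar bookkeeping ($\Lambda!$, $n!$, $(-1)^{\varepsilon(J)}$, $(-1)^{\binom{m}{2}}$, and the bidegree-mismatch cases) checks out. Your route is shorter and avoids $H_\Lambda$, Lemma \ref{064}, Proposition \ref{065} and the counting lemmas at this step, and it exhibits $\rho$ and $\tilde{\rho}$ as adjoint to one another, which is a clean conceptual byproduct; what the paper's longer computation buys is the explicit formula $\tilde{\rho}(h_\Lambda)=\frac{(\Lambda^s)^!}{n!}H_\Lambda$, which it reuses immediately afterwards (e.g.\ to derive \eqref{076}), so the extra work is not wasted there. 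For the inner-product half both arguments use Proposition \ref{032} for symmetry; the paper gets nondegeneracy from the orthogonality of the power sums (Proposition \ref{036}), while you read it off the $m$--$h$ dual-basis Gram matrix, which is equally valid and slightly more direct.
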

\begin{proof}
Proposition \ref{032} and Proposition \ref{036} imply that $\ipro{\,,}$ is an inner product. To show that $\tilde{\rho}$ is an isometry, for every $\Lambda\in\sPar_{n,m}$, we define\[H_{\Lambda}=\sum_{\Lambda(I)=\Lambda}(-1)^{\varepsilon(I)}h_I,\quad\text{where}\quad I\in\sP_{n,m}.\]By Proposition \ref{039}(4) and Lemma \ref{038}, we have\begin{equation}\label{066}\rho(H_{\Lambda})=\sum_{\Lambda(I)=\Lambda}(-1)^{\varepsilon(I)}\rho(h_I)=\sum_{\Lambda(I)=\Lambda}\Lambda(I)!h_{\Lambda(I)}=\sum_{\Lambda(I)=\Lambda}\Lambda!h_{\Lambda}=\binom{n}{\Lambda}\Lambda!h_{\Lambda}=\frac{n!}{(\Lambda^s)^!}h_{\Lambda}.\end{equation}On the other hand, Proposition \ref{065} implies that\[H_{\Lambda}=\sum_{\Lambda(I)=\Lambda}(-1)^{\varepsilon(I)}\sum_{K\in\sC_{n,m}}(I\sqcap K)!m_K=\sum_{K\in\sC_{n,m}}\underbrace{\sum_{\Lambda(I)=\Lambda}(-1)^{\varepsilon(I)}(I\sqcap K)!}_{C_K}m_K=\sum_{K\in\sC_{n,m}}C_Km_K.\]By Lemma \ref{064}, we have $C_K=C_L$ for all $K,L\in\sC_{n,m}$ with $\bar{K}$ and $\bar{L}$ of the same type. Hence,\[H_{\Lambda}=\sum_{\Omega\in\sPar_{n,m}}C_\Omega\tilde{\rho}(m_\Omega)=\tilde{\rho}\left(\underbrace{\sum_{\Omega\in\sPar_{n,m}}C_\Omega m_\Omega}_F\right)=\tilde{\rho}(F),\quad\text{where}\quad C_\Omega=\sum_{\Lambda(\bar{K})=\Omega}C_K.\]By using \eqref{066} together with Proposition \ref{062}, we get ${\displaystyle F=\rho(\tilde{\rho}(F))=\rho(H_\Lambda)=\frac{n!}{(\Lambda^s)^!}h_{\Lambda}}$. Thus\[\begin{array}{rcl}
\ipro{\tilde{\rho}(m_{\Omega}),\tilde{\rho}(h_{\Lambda})}&=&{\displaystyle\ipro{\frac{\Omega!}{n!}\sum_{\Lambda(I)=\Omega}(-1)^{\varepsilon(I)}m_I,\frac{(\Lambda^s)^!}{n!}H_{\Lambda}}},\\[0.5cm]
&=&{\displaystyle\ipro{\frac{\Omega!}{n!}\sum_{\Lambda(I)=\Omega}(-1)^{\varepsilon(I)}m_I,\frac{(\Lambda^s)^!}{n!}\sum_{\Lambda(J)=\Lambda}(-1)^{\varepsilon(J)}h_J}},\\[0.5cm]
&=&{\displaystyle\frac{\Lambda!(\Lambda^s)^!}{(n!)^2}\binom{n}{\Lambda}(-1)^{\binom{m}{2}}n!}\delta_{\Omega,\Lambda},\\[0.4cm]
&=&\ipro{m_\Omega,h_\Lambda}.
\end{array}\]Therefore $\tilde{\rho}$ is an isometry.
\end{proof}

As shown in the proof of Proposition \ref{063}, we have ${\displaystyle\tilde{\rho}(h_\Lambda)=\frac{(\Lambda^s)^!}{n!}H_\Lambda}$. Similarly, we can show that\begin{equation}\label{076}\tilde{\rho}(p_\Lambda)=\binom{n}{\Lambda}^{-1}P_\Lambda,\quad\text{where}\quad P_\Lambda=\sum_{\Lambda(I)=\Lambda}(-1)^{\varepsilon(I)}p_I.\end{equation}Indeed, due to Proposition \ref{030}, we get\[P_\Lambda=\sum_{\Lambda(I)=\Lambda}(-1)^{\varepsilon(I)}\sum_{K\sqsupseteq I}m_K.\]Also, observe that if $m_K$ and $m_L$ are monomial symmetric functions occurring in $P_\Lambda$ such that $K,L$ are partial set supercompositions whose $i$th fermionic blocks contain the same number of elements with $K^+,L^+$ of the same type, then their coefficient in $P_\Lambda$ are the same as well. This implies that $P_\Lambda$ belongs to the image of $\tilde{\rho}$. Now, \eqref{076} follows by applying Proposition \ref{062}.

\section{Schur symmetric functions in superspace}

In this section, we define Schur--type functions in noncommuting variables in superspace by means super semistandard Young tableaux. Also, we express these functions in terms of the monomial basis and show their projection on Schur--type functions in commuting variables in superspace.

Recall that partitions of integers can be represented by \emph{Young tableaux}. Indeed, the \emph{Young diagram} or simply the \emph{diagram} of a partition $\lambda=(\lambda_1,\ldots,\lambda_k)$ is the array with $k$ rows of left--justified boxes in which the $i$th row contains exactly $\lambda_i$ boxes. See Figure \ref{048}.
\begin{figure}[H]
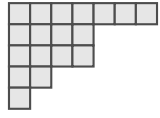

\figfou
\caption{Diagram of the partition $\lambda=(7,4,4,2,1)$.}
\label{048}
\end{figure}

A partition $\mu$ is said to be \emph{contained} in a partition $\lambda$, denoted by $\mu\subseteq\lambda$, if $\ell(\mu)\leq\ell(\lambda)$ and $\mu_i\leq\lambda_i$ for all $i\in[\ell(\mu)]$. If $\mu\subseteq\lambda$, the \emph{skew diagram} $\lambda/\mu$ is the diagram obtained by removing the boxes from the diagram of $\lambda$ that belong to the diagram of $\mu$. The skew diagram $\lambda/\mu$ is called an \emph{horizontal $r$--strip} (resp. \emph{vertical $r$--strip}) if $\deg(\lambda)-\deg(\mu)=r$ and each column (resp. row) has at most one box. See Figure \ref{049}.
\begin{figure}[H]
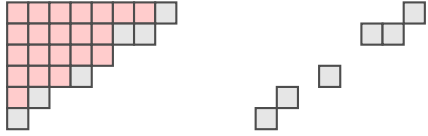

\figfiv
\caption{A horizontal $7$--strip $\lambda/\mu$, where $\lambda=(8,7,5,4,2,1)$ and $\mu=(7,5,5,3,1)$.}
\label{049}
\end{figure}
In this section, we identify $\Lambda=(\Lambda_1,\ldots,\Lambda_m;\Lambda_{m+1},\ldots,\Lambda_k)\in\sPar_{n,m}$ with the pair $(\Lambda^\oplus,\Lambda^+)$, where\[\Lambda^\oplus=\sort(\Lambda_1+1,\ldots,\Lambda_m+1,\Lambda_{m+1},\ldots,\Lambda_k).\]Observe that the skew diagram $\Lambda^\oplus/\Lambda^+$ is simultaneously an horizontal $m$--strip and an vertical $m$--strip. The superpartition $\Lambda$ can be represented either by the pair of diagrams of $(\Lambda^\oplus,\Lambda^+)$, or by the diagram of $\Lambda^\oplus$ in which the boxes of $\Lambda^\oplus/\Lambda^+$ has been replaced by circles. These diagrams, accepting at most one round box at the end of each row, will be called \emph{super Young diagram}. See Figure \ref{050}. The \emph{conjugate} of a superpartition $\Lambda$ is the superpartition $\Lambda'$ obtained by conjugating both $\Lambda^\oplus$ and $\Lambda^+$.
\begin{figure}[H]
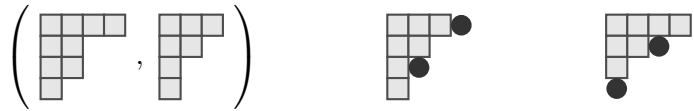

\figsix
\caption{Diagrams of $\Lambda=(3,1;2,1)$ with $\Lambda^+=(3,2,1,1)$ and $\Lambda^\oplus=(4,2,2,1)$, and its conjugate.}
\label{050}
\end{figure}
Consider $\bar{\N}_0=\{\bar{0},\bar{1},\bar{2},\ldots\}$. For every $\alpha\in\N_0\cup\bar{\N}_0$, we set $|\alpha|=a\in\N_0$, if either $\alpha=a$ or $\alpha=\bar{a}$.

Let $\Lambda$ be a superpartition, and let $\alpha_1,\ldots,\alpha_r\in\N_0\cup\bar{\N}_0$. A \emph{super semistandard Young tableau} of \emph{shape} $\Lambda$ and \emph{weight} $(\alpha_1,\ldots,\alpha_r)$, is a sequence of superpartitions $(\Lambda_{(0)},\ldots,\Lambda_{(r)})$ with $\Lambda_{(0)}=\emptyset$ and $\Lambda_{(r)}=\Lambda$, satisfying:
\begin{enumerate}
\item For each $i\in[n]$, $\Lambda_{(i)}^+/\Lambda_{(i-1)}^+$ is a horizontal $|\alpha_i|$--strip.
\item If $\alpha_i\in\N_0$ and $\Lambda_{(i-1)}$ contains the $j$th circle in the $r$th row, then\label{051}
\begin{enumerate}
\item $\df(\Lambda_{(i)})=\df(\Lambda_{(i-1)})$.
\item $\Lambda_{(i)}$ contains the $j$th circle in the $r$th row whenever the $r$th row of $\Lambda_{(i)}^+/\Lambda_{(i-1)}^+$ is empty.
\item $\Lambda_{(i)}$ contains the $j$th circle in the $(r+1)th$ row whenever the $r$th row of $\Lambda_{(i)}^+/\Lambda_{(i-1)}^+$ is nonempty.
\end{enumerate}
\item If $\alpha_i\in\bar{\N}_0$, then 
\begin{enumerate}
\item $\df(\Lambda_{(i)})=\df(\Lambda_{(i-1)})+1$.
\item $\Lambda_{(i)}$ contains a unique circle, say in the $c$th column, such that the $c$th column of $\Lambda^+_{(i)}/\Lambda^+_{(i-1)}$ is empty and every other column to its left is nonemtpy.
\item The diagram obtained from $\Lambda_{(i)}$ by removing the new circle above satisfies conditions in \eqref{051}.
\end{enumerate}
\end{enumerate}
See Figure \ref{052} for an instance.
\begin{figure}[H]
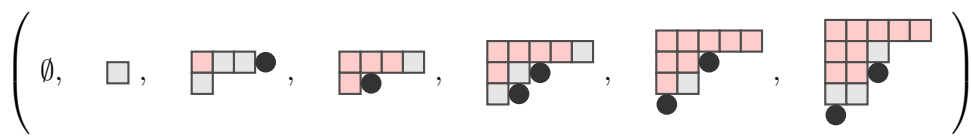

\figsev
\caption{A super semistandard Young tableau of shape $(1,\bar{3},1,\bar{3},1,3)$.}
\label{052}
\end{figure}
Every super semistandard Young tableau $(\Lambda_{(0)},\ldots,\Lambda_{(r)})$ of weight $(\alpha_1,\ldots,\alpha_r)$ and shape $\Lambda$ can be represented by filling with positive integers $a_1<\cdots<a_r$ the boxes and circles of $\Lambda$, such that for each $i\in[r]$, the boxes of $\Lambda^+_{(i)}/\Lambda^+_{(i-1)}$ in $\Lambda$ are filled with the integer $a_i$, and if $\alpha_i\in\bar{\N}_0$, the new circle of $\Lambda_{(i)}$ in $\Lambda$ is filled with $a_i$ as \mbox{well. See Figure \ref{053}.} Observe that if $\Lambda$ is a usual partition, this representation coincides with the classic definition of semistandard Young tableaux \cite{St99}.
\begin{figure}[H]
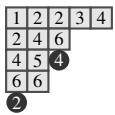

\figeig
\caption{Super semistandard Young tableau in Figure \ref{052} represented by filling its boxes with numbers.}
\label{053}
\end{figure}

If $T$ is a super Young diagram, we will denote its boxes by $T_1^s,\ldots,T_n^s$, where $i<j$ if and only if $T_j^s$ is located either to the right or below $T_i^s$. The circles of $T$ are denoted by $T_1^a,\ldots,T_m^a$, indexed from top to bottom. Given $t$ either a box or a circle of $T$, we denote by $c(t)$ its label.

\begin{dfn}
The \emph{Schur function} of a superpartition $\Lambda\in\sPar_{n,m}$ is the function in superspace, defined as follows:
\[S_\Lambda=\sum_{T\in sSSYT(\Lambda)}\underbrace{\theta_{_{c(T^a_1)}}\cdots\theta_{_{c(T^a_m)}}\sum_{\sigma\in\Sym_n}x_{_{c(T^s_{\sigma(1)})}}\cdots x_{_{c(T^s_{\sigma(n)})}}}_{[\theta x]^T},\]where $sSSYT(\Lambda)$ is the collection of super semistandard Young tableux of shape $\Lambda$.
\end{dfn}
For instance, for $\Lambda=(2,1;\,\,)$ and $T=\fignin$ in $sSSYT(\Lambda)$ with weight $(\bar{1},\bar{2})$, we have\[[\theta x]^T=\theta_2\theta_1(x_1x_2x_2+x_2x_1x_2+x_2x_2x_1+x_1x_2x_2+x_2x_1x_2+x_2x_2x_1).\]
Observe that if $\Lambda\in\sPar_{n,m}$ and $T\in sSSYT(\Lambda)$ with weight $(\alpha_1,\ldots,\alpha_r)$, then there is $\sigma\in\Sym_n$ such that\[x_{_{c(T^s_{\sigma(1)})}}\cdots x_{_{c(T^s_{\sigma(n)})}}=x_1^{|\alpha_1|}\cdots x_r^{|\alpha_r|}=:x^T.\]Thus,\begin{equation}\label{056}[\theta x]^T=\theta^T\sum_{\sigma\in\Sym_n}\sigma^{-1}\circ x^T,\quad\text{where}\quad\theta^T=\theta_{_{c(T^a_1)}}\cdots\theta_{_{c(T^a_m)}}.\end{equation}

\begin{lem}\label{057}
For every superpartition $\Lambda$, there is a bijection between the subcollection of $sSSYT(\Lambda)$ with weight $(\alpha_1,\ldots,\alpha_r)$ and the subcollection of $sSSYT(\Lambda)$ with weight $(\alpha_1,\ldots,\alpha_{i-1},\alpha_{i+1},\alpha_i,\alpha_{i+1},\ldots,\alpha_r)$.
\end{lem}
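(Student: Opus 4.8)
The plan is to exhibit, for the $i$ in question, an explicit bijection $\phi_i$ between the set of $T\in sSSYT(\Lambda)$ of weight $(\alpha_1,\ldots,\alpha_r)$ and the set of those whose weight is obtained by transposing the $i$th and $(i+1)$th entries; this is the superspace counterpart of the classical Bender--Knuth involution. It is convenient to picture a tableau of weight $(\alpha_1,\ldots,\alpha_r)$ as a filling of the super Young diagram of $\Lambda$ by labels $a_1<\cdots<a_r$: the boxes of $\Lambda^+_{(j)}/\Lambda^+_{(j-1)}$ receive the label $a_j$, and when $\alpha_j\in\bar{\N}_0$ the circle created at step $j$ receives $a_j$ as well; the weight then records which labels are barred, while conditions (1)--(3) of the definition translate into the box-filling being semistandard together with the interleaving constraints that govern how circles drop as the rows below them are lengthened. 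Since $\phi_i$ will only rearrange the labels $a_i$ and $a_{i+1}$, one may assume $r=2$ and work with the two labels $\{1,2\}$.

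First I would treat the case where $\alpha_i$ and $\alpha_{i+1}$ have the same barred status. If both lie in $\N_0$, the labels $a_i,a_{i+1}$ occur only on boxes and the circles of $T$ are untouched, so $\phi_i$ is the classical Bender--Knuth involution applied row by row to the sub-filling by $a_i$ and $a_{i+1}$: in each row, the occurrences of $a_i$ and $a_{i+1}$ not belonging to a vertical $a_i/a_{i+1}$ pair form a string $a_i^{\,p}a_{i+1}^{\,q}$, which is replaced by $a_i^{\,q}a_{i+1}^{\,p}$; this swaps $|\alpha_i|$ with $|\alpha_{i+1}|$, keeps the filling semistandard, and one checks the displaced entries still satisfy the circle interleaving conditions for the unchanged circles. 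If both lie in $\bar{\N}_0$, one performs the same row swap on the box-labels and, in addition, interchanges the two circles carrying $a_i$ and $a_{i+1}$, repositioning each to the cell forced by condition (3) relative to the new intermediate shape $\Lambda^+_{(i)}$. In either subcase $\phi_i$, read on the union of the two weight classes, is its own inverse and so a bijection.

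The main obstacle is the mixed case, say $\alpha_i\in\bar{\N}_0$ and $\alpha_{i+1}\in\N_0$, the opposite order being symmetric and producing the inverse map. Now $\phi_i$ must turn the circle labelled $a_i$ into a box labelled $a_{i+1}$ while redistributing boxes, so the image tableau carries a circle labelled $a_{i+1}$ and a correspondingly altered intermediate shape. One natural construction is: locate the circle labelled $a_i$ in $T$, run the Bender--Knuth row swap on the box occurrences of $a_i$ and $a_{i+1}$ while treating that circle cell as an extra trailing $a_i$ in its row, and then take the cell that ends up past the last box of the modified row to be the new circle, relabelled $a_{i+1}$. The substance of the proof is to verify that the resulting sequence $(\Lambda_{(0)},\ldots,\Lambda_{(r)})$ again meets conditions (1)--(3) — each step adds a horizontal strip of the correct size, the remaining circles obey condition (2), and the new circle's column obeys condition (3) — and that the construction is reversible via the symmetric move with $i$ and $i+1$ exchanged. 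This is where the bookkeeping is heaviest, precisely because lengthening a row below a circle pushes it down one row, so one must track carefully how the swap alters the row lengths of the two strips $\Lambda^+_{(i)}/\Lambda^+_{(i-1)}$ and $\Lambda^+_{(i+1)}/\Lambda^+_{(i)}$. Once all four barred/unbarred combinations are in hand, $\phi_i$ is the asserted bijection; iterating the $\phi_i$ then shows that the number of tableaux in $sSSYT(\Lambda)$ of a given weight is invariant under any permutation of that weight, which is exactly what is needed to conclude that $S_\Lambda\in\sNCSym$. (The tableau combinatorics here is the same as for the commuting super Schur functions, so the lemma could alternatively be deduced from \cite{JoLa17}.)
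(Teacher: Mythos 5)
Your overall strategy — a Bender--Knuth-type swap of the labels $a_i,a_{i+1}$ — is the same classical ingredient the paper uses, but as written your argument has a genuine gap exactly where you locate the difficulty. In the mixed case ($\alpha_i\in\bar{\N}_0$, $\alpha_{i+1}\in\N_0$) you propose an ad hoc rule ("treat the circle cell as a trailing $a_i$, then take the cell past the last box of the modified row as the new circle") and then state that "the substance of the proof is to verify" conditions (1)--(3) and reversibility — but that verification is precisely the content of the lemma, and it is not carried out. Moreover, your placement rule for the new circle is not reconciled with the definition: in a valid element of $sSSYT(\Lambda)$ the new circle's column is \emph{forced} by condition (3)(b) relative to the new strip $\Omega^+/\Lambda_{(i-1)}^+$, not by a local end-of-row prescription, so it is not even clear your map lands in $sSSYT(\Lambda)$. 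A second unverified point (present in all four cases, not just the mixed one): circles created at steps $j<i$ move deterministically through steps $i$ and $i+1$ according to which of their rows receive boxes, so after replacing the intermediate shape you must check that their forced trajectory through the new $\Omega$ still arrives at the (unchanged) circles of $\Lambda_{(i+1)}$; you assert this but do not prove it. Finally, reversibility in the mixed case is claimed "via the symmetric move" without argument.

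For comparison, the paper's proof avoids most of this bookkeeping by working with the chain of superpartitions rather than with fillings: it applies the classical bijection $g$ of \cite[Theorem 7.10.2]{St99} to the underlying chain $(\Lambda_{(0)}^+,\ldots,\Lambda_{(r)}^+)$, which replaces only $\Lambda_{(i)}^+$ by some $\Omega^+$, and then defines the new intermediate superpartition $\Omega$ by requiring the numbers of new circles at steps $i$ and $i+1$ to be interchanged; since conditions (2) and (3) determine the circle positions uniquely from the shapes and the barred pattern, $\Omega$ is well defined and bijectivity is inherited from $g$. If you want to salvage your filling-level construction, you should either prove that your new-circle rule coincides with the position forced by condition (3)(b) and that the remaining circles' forced movements are compatible with $\Lambda_{(i+1)}$, or restructure the argument as above so that the circle data is determined rather than constructed.
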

\begin{proof}
Let $g$ be the classic bijection between semistandard Young tableaux of shape $\lambda$ and weight $(a_1,\ldots,a_r)$ and semistandard Young tableaux of shape $\lambda$ and weight $(a_1,\ldots,a_{i-1},a_{i+1},a_i,a_{i+1},\ldots,a_r)$ \cite[Theorem 7.10.2]{St99}.

Now, let $G$ be the map sending a super semistandard Young tableau $(\Lambda_{(0)},\ldots,\Lambda_{(r)})$ of shape $\Lambda$ and weight $(\alpha_1,\ldots,\alpha_r)$ to the super semistandard Young tableau $(\Lambda_{(0)},\ldots,\Lambda_{(i-1)},\Omega,\Lambda_{(i+1)},\ldots,\Lambda_{(r)})$ of shape $(\alpha_1,\ldots,\alpha_{i-1},\alpha_{i+1},\alpha_i,\alpha_{i+1},\ldots,\alpha_r)$, where $g(\Lambda_{(0)}^+,\ldots,\Lambda_{(r)}^+)=(\Lambda_{(0)}^+,\ldots,\Lambda_{(i-1)}^+,\Omega^+,\Lambda_{(i+1)}^+,\ldots,\Lambda_{(r)}^+)$, such that the number of new circles from $\Lambda_{(i-1)}$ to $\Omega$ is the same as the one from $\Lambda_{(i)}$ to $\Lambda_{(i+1)}$, and similarly the number of new circles from $\Omega$ to $\Lambda_{(i+1)}$ is the same as the one from $\Lambda_{(i-1)}$ to $\Lambda_{(i)}$. As $g$ is a bijection and new circles are uniquely determined, then $G$ is a bijection as well.
\end{proof}

\begin{lem}\label{058}
For every monomial $u$ in $x$--variables, we have $s_i(\sigma^{-1}\circ u)=\sigma^{-1}\circ(s_i(u))$, where $s_i=(i\,\,i+1)$, $\sigma\in\Sym_n$ and $\circ$ is the action defined in \eqref{055}.
\end{lem}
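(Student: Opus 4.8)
The plan is to verify the identity by unwinding both sides directly from the definitions. The essential point is that the transposition $s_i$ acts on the \emph{subscripts} of the variables (it relabels which variable occupies each slot of the word), whereas the operator $\sigma^{-1}\circ(-)$ acts on the \emph{slots} (it permutes the positions of the factors, as prescribed in \eqref{055}). These two operations therefore touch disjoint pieces of data, so they commute; this commutation is exactly the assertion of the lemma.

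Concretely, I would write $u=x_{b_1}\cdots x_{b_n}$. First I would compute the left-hand side: applying \eqref{055} with $\delta=\sigma^{-1}$, hence $\delta^{-1}=\sigma$, gives $\sigma^{-1}\circ u=x_{b_{\sigma(1)}}\cdots x_{b_{\sigma(n)}}$, and then applying $s_i$, which simply replaces each subscript $a$ by $s_i(a)$, yields $s_i(\sigma^{-1}\circ u)=x_{s_i(b_{\sigma(1)})}\cdots x_{s_i(b_{\sigma(n)})}$. For the right-hand side I would compute $s_i(u)=x_{s_i(b_1)}\cdots x_{s_i(b_n)}$, a word with letters $c_j:=s_i(b_j)$, and then apply $\sigma^{-1}\circ(-)$ to it; by \eqref{055} again this produces $x_{c_{\sigma(1)}}\cdots x_{c_{\sigma(n)}}=x_{s_i(b_{\sigma(1)})}\cdots x_{s_i(b_{\sigma(n)})}$. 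The two expressions agree letter by letter, which establishes $s_i(\sigma^{-1}\circ u)=\sigma^{-1}\circ(s_i(u))$.

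There is no genuine obstacle here; the only thing requiring care is the bookkeeping, namely not conflating the position permutation $\delta^{-1}$ hidden in the definition \eqref{055} of $\circ$ with the subscript permutation $s_i$, and correctly tracking the inverse when one specializes $\delta=\sigma^{-1}$. Note also that no relation of $\sseries$ is used: the argument is purely about words in the generators $x_1,x_2,\dots$, so noncommutativity of the $x$-variables is irrelevant. If one prefers, the same statement can be phrased abstractly: $s_i$ restricts to an endomorphism of the free monoid on $\{x_1,x_2,\dots\}$ permuting generators (a ``left'' action), while $\sigma^{-1}\circ(-)$ permutes the positions of the letters of a length-$n$ word (a ``right''-type action), and left and right actions on words always commute. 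I would present the short explicit computation above rather than the abstract version.
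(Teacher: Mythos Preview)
Your proof is correct and follows essentially the same direct computation as the paper's: unwind both sides from the definition \eqref{055} and compare letter by letter. The only minor difference is that the paper first reduces to the case where $\sigma$ is a simple transposition $(j\,\,j+1)$ before computing, whereas you treat general $\sigma$ directly; your version is if anything cleaner, since the reduction step is unnecessary.
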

\begin{proof}
Let $u=x_{b_1}\cdots x_{b_n}$. It is enough to take $\sigma=(j\,\,j+1)$. Then:\[\begin{array}{rcl}
s_i(\sigma^{-1}\circ u)&=&s_i(x_{b_1}\cdots x_{b_{j-1}}x_{b_{j+1}}x_{b_j}x_{b_{j+2}}\cdots x_{b_n}),\\
&=&x_{s_i(b_1)}\cdots x_{s_i(b_{j-1})}x_{s_i(b_{j+1})}x_{s_i(b_j)}x_{s_i(b_{j+2})}\cdots x_{s_i(b_n)},\\
&=&\sigma^{-1}\circ(x_{s_i(b_1)}\cdots x_{s_i(b_n)}),\\
&=&\sigma^{-1}\circ s_i(u).
\end{array}\]
\end{proof}

\begin{thm}\label{069}
For every $\Lambda\in\sPar_{n,m}$, the Schur function $S_\Lambda$ is symmetric.
\end{thm}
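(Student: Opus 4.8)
The plan is to show that $S_\Lambda$ is fixed by every adjacent transposition $s_i=(i\,\,i+1)$ with $i\ge1$; this suffices, since $S_\Lambda$ is a well-defined function of bounded degree (it is homogeneous, lying in $\sseries[n,m]$) and the permutations $s_1,s_2,\ldots$ generate the group of all finitely supported permutations of $\N$, so invariance under all of them is exactly invariance under every $\Sym_N$. Fix $i$. I would split $sSSYT(\Lambda)$ into four subcollections $\mathcal{A}_\emptyset,\mathcal{A}_{\{i\}},\mathcal{A}_{\{i+1\}},\mathcal{A}_{\{i,i+1\}}$, according to whether the labels $i$ and $i+1$ occur in a tableau (equivalently, whether the entries $\alpha_i,\alpha_{i+1}$ of its weight are nonzero), and write $S_\Lambda=f_\emptyset+f_{\{i\}}+f_{\{i+1\}}+f_{\{i,i+1\}}$ for the corresponding partial sums of the $[\theta x]^T$.

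The first three pieces are immediate. No monomial of $f_\emptyset$ involves $x_i,x_{i+1},\theta_i$ or $\theta_{i+1}$, so $s_i(f_\emptyset)=f_\emptyset$. Replacing every label $i$ by $i+1$ defines a bijection $\mathcal{A}_{\{i\}}\to\mathcal{A}_{\{i+1\}}$, because the super semistandard conditions only see the relative order of the labels used and which steps are fermionic, and neither changes (there is no label strictly between $i$ and $i+1$). On a tableau in $\mathcal{A}_{\{i\}}$ the permutation $s_i$ acts simply by renaming $i$ to $i+1$, hence sends $[\theta x]^T$ to $[\theta x]^{T'}$ with $T'$ the relabelled tableau, and symmetrically for $\mathcal{A}_{\{i+1\}}$. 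Thus $s_i(f_{\{i\}})=f_{\{i+1\}}$ and $s_i(f_{\{i+1\}})=f_{\{i\}}$, so $s_i(f_{\{i\}}+f_{\{i+1\}})=f_{\{i\}}+f_{\{i+1\}}$.

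For $f_{\{i,i+1\}}$ I would invoke the Bender--Knuth--type bijection $G$ of Lemma \ref{057} applied to the $i$-th and $(i+1)$-st positions of the weight; since both positions carry nonzero entries for every $T\in\mathcal{A}_{\{i,i+1\}}$ and $G$ exchanges them, $G$ restricts to a bijection of $\mathcal{A}_{\{i,i+1\}}$ onto itself. Fix such a $T$. By \eqref{056} and Lemma \ref{058}, $s_i([\theta x]^T)=s_i(\theta^T)\sum_{\sigma\in\Sym_n}\sigma^{-1}\circ s_i(x^T)$. The monomial $s_i(x^T)$ differs from $x^T$ only in that the exponents of $x_i$ and $x_{i+1}$ are swapped, i.e.\ up to the order of its factors it is $x^{G(T)}$; since $\sum_{\sigma}\sigma^{-1}\circ(-)$ depends only on the multiset of letters of a word, $\sum_{\sigma}\sigma^{-1}\circ s_i(x^T)=\sum_{\sigma}\sigma^{-1}\circ x^{G(T)}$. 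For the fermionic factor, $G$ leaves every circle created at a step other than $i$ or $i+1$ in place with its label unchanged (it modifies only the $i$-th intermediate shape), while the construction in the proof of Lemma \ref{057} interchanges the circles created at steps $i$ and $i+1$, so that $\theta^{G(T)}=s_i(\theta^T)$. Hence $s_i([\theta x]^T)=[\theta x]^{G(T)}$, and summing over $\mathcal{A}_{\{i,i+1\}}$ gives $s_i(f_{\{i,i+1\}})=f_{\{i,i+1\}}$.

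Adding the four contributions yields $s_i(S_\Lambda)=S_\Lambda$ for every $i\ge1$, hence $S_\Lambda$ is symmetric. The delicate point, where I expect the real work to lie, is the fermionic bookkeeping in the last case: one must check that $G$ interchanges the two circles created at steps $i$ and $i+1$ so that, on the $\theta$-variables, it realizes exactly the transposition $\theta_i\leftrightarrow\theta_{i+1}$ with no spurious sign. Everything else is the standard reduction to adjacent transpositions together with the classical Bender--Knuth argument for the symmetry of Schur functions, transplanted to superspace via Lemmas \ref{057} and \ref{058}.
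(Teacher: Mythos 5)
Your proposal is correct and takes essentially the same route as the paper: reduce to adjacent transpositions $s_i$, then combine the weight-swapping bijection of Lemma \ref{057} with Lemma \ref{058} and the invariance of $\sum_{\sigma\in\Sym_n}\sigma^{-1}\circ(-)$ under composition with a fixed $\tau^{-1}$ to obtain $s_i([\theta x]^T)=[\theta x]^{T'}$. Your four-case split on whether the labels $i,i+1$ occur, and your explicit check that the bijection exchanges the circles created at steps $i$ and $i+1$ so that it realizes $\theta_i\leftrightarrow\theta_{i+1}$ on the fermionic factor, are just a more detailed spelling-out of what the paper absorbs into its appeal to Lemma \ref{057}.
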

\begin{proof}
We need to show that for every $T\in sSSYT(\Lambda)$ there is $T'\in sSSYT(\Lambda)$ such that $s_i([\theta x]^T)=[\theta x]^{T'}$ with $s_i=(i\,\,i+1)$ for all $i$. Lemma \ref{057} implies that there are $\tau\in\Sym_n$ and $T'\in sSSYT(\Lambda)$, such that $s_i(\theta^T)=\theta^{T'}$ and $s_i(x^T)=\tau^{-1}\circ x^{T'}$. Thus, by applying Lemma \ref{058}, we have \[s_i([\theta x]^T)=s_i(\theta^T)\sum_{\sigma\in\Sym_n}s_i(\sigma^{-1}\circ x^T)=\theta^{T'}\sum_{\sigma\in\Sym_n}\sigma^{-1}\circ s_i(x^T)=\theta^{T'}\sum_{\sigma\in\Sym_n}\sigma^{-1}\circ(\tau^{-1}\circ x^{T'})
=[\theta x]^{T'}.\]
\end{proof}

Given a super semistandard Young $T$ with $m$ circles, we denote by $\inv(T)$, the number of inversions of the word $c(T_1^a)\cdots c(T_m^a)$. As defined in \cite{JoLa17}, for $\Lambda,\Omega\in\sPar_{n,m}$ with $\Omega=(\Omega_1,\ldots,\Omega_m;\Omega_{m+1},\ldots,\Omega_k)$, we set\begin{equation}\label{070}K_{\Lambda,\Omega}=\sum_T(-1)^{\inv(T)},\end{equation}where $T$ is a super semistandard Young tableau of shape $\Lambda$ and weight $(\bar{\Omega}_1,\ldots,\bar{\Omega}_m,\Omega_{m+1},\ldots,\Omega_k)$.

\begin{pro}\label{059}
For every superpartition $\Lambda\in\sPar_{n,m}$, we have\[S_{\Lambda}=\sum_\Omega K_{\Lambda,\Omega}\Omega!\sum_{\Lambda(I)=\Omega}(-1)^{\varepsilon(I)}m_I.\]
\end{pro}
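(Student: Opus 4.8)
The plan is to connect the Schur function $S_\Lambda$ to the monomial basis $\{m_I\}$ by extracting the coefficient of a fixed monomial of each type. Start from the formula
\[
S_\Lambda=\sum_{T\in sSSYT(\Lambda)}[\theta x]^T,
\]
and recall from \eqref{056} that $[\theta x]^T=\theta^T\sum_{\sigma\in\Sym_n}\sigma^{-1}\circ x^T$, where $x^T=x_1^{|\alpha_1|}\cdots x_r^{|\alpha_r|}$ is determined by the weight $(\alpha_1,\ldots,\alpha_r)$ of $T$ and $\theta^T=\theta_{c(T^a_1)}\cdots\theta_{c(T^a_m)}$. The key observation is that, since the $x$--labels $a_1<\cdots<a_r$ of $T$ are merely placeholders, the monomials occurring in $[\theta x]^T$ after choosing which labels to use are exactly the monomials of some $M(I)$ for a convex set superpartition $I$ of the type $\Lambda(I)=\Omega$ determined by the multiset $\{|\alpha_1|,\ldots,|\alpha_r|\}$ together with the position of the fermionic labels; the sign $(-1)^{\inv(T)}$ records the reordering of the $\theta$--variables needed to sort them as the blocks of $I$, so it contributes the factor $(-1)^{\varepsilon(I)}$ in Proposition \ref{039}\eqref{040}.

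First I would fix a superpartition $\Omega\in\sPar_{n,m}$ and a convex set superpartition $I$ with $\Lambda(I)=\Omega$, and compute the coefficient of a distinguished monomial $u\in M(I)$ in $S_\Lambda$. By Proposition \ref{004}, $m_I$ is the symmetrization of $u$, and by Proposition \ref{006}\eqref{012} every non null--symmetric monomial lies in a unique $M(\sigma\fcirc I')$; hence the coefficient of $m_I$ in $S_\Lambda$ equals $\coef_u(S_\Lambda)$. Expanding, $\coef_u(S_\Lambda)=\sum_T\coef_u([\theta x]^T)$, where $T$ runs over super semistandard Young tableaux of shape $\Lambda$. A tableau $T$ contributes precisely when its weight is $(\bar\Omega_1,\ldots,\bar\Omega_m,\Omega_{m+1},\ldots,\Omega_k)$ (up to the permutations accounted for by Lemma \ref{057}, which reorder the weight without changing the tableau count), giving the factor $(-1)^{\inv(T)}$; summing these yields exactly $K_{\Lambda,\Omega}$ as defined in \eqref{070}. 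The inner sum $\sum_{\sigma\in\Sym_n}\sigma^{-1}\circ x^T$ then produces each monomial of the shape of $u$ with multiplicity equal to the number of rearrangements fixing it, namely $\Omega!=\Omega_1!\cdots\Omega_k!$ (the product of factorials of the block sizes, with fermionic blocks contributing $(\Omega_i+1)!$ through the convention $|I_j|=\Omega_j+1$ for fermionic blocks — here one must be careful, and I would double-check whether the stated coefficient is $\Omega!$ or $\Omega^{\oplus}!$; the computation below fixes this).

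Next I would sum over all convex $I$ of type $\Omega$ and then over all $I$ of type $\Omega$ using Lemma \ref{021}: since $\rho$--type arguments are not needed here but the analogous invariance $\coef$ under the $\circ$--action holds, each set superpartition of type $\Omega$ appears with the same coefficient $K_{\Lambda,\Omega}\,\Omega!$, and the signs $(-1)^{\varepsilon(I)}$ reconstitute $\sum_{\Lambda(I)=\Omega}(-1)^{\varepsilon(I)}m_I$. Finally, summing over all $\Omega\in\sPar_{n,m}$ gives
\[
S_\Lambda=\sum_\Omega K_{\Lambda,\Omega}\,\Omega!\sum_{\Lambda(I)=\Omega}(-1)^{\varepsilon(I)}m_I,
\]
as claimed. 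The main obstacle is the bookkeeping of the two independent symmetrizations: the $\theta$--reordering (governed by $\inv(T)$ and matched to $\varepsilon(I)$) and the $x$--symmetrization $\sum_{\sigma\in\Sym_n}\sigma^{-1}\circ x^T$, which must be shown to produce precisely the multiplicity $\Omega!$ on each monomial of $M(I)$ and no extra cross terms between tableaux of different weights; Lemma \ref{057} and Lemma \ref{058} are exactly the tools that control this, and the subtle point is verifying that distinct tableaux contributing to the same monomial are counted with the correct sign $(-1)^{\inv(T)}$ so that the tableau sum collapses to $K_{\Lambda,\Omega}$ rather than something larger.
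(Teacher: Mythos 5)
Your proposal follows essentially the same route as the paper's proof: use the symmetry of $S_\Lambda$ to expand it in the monomial basis, identify the coefficient of $m_I$ with the coefficient of a single representative monomial $u\in M(I)$, recognize that coefficient as a signed count of tableaux of weight determined by $\Lambda(I)$ (giving $K_{\Lambda,\Lambda(I)}(-1)^{\varepsilon(I)}$) times the stabilizer factor from the $x$--symmetrization, and then regroup by type $\Omega$. The one point you left open resolves to $\Omega!$ rather than $\Omega^{\oplus}!$: the circle of a fermionic label corresponds to a $\theta$--variable, so that label occurs only $\Omega_i$ times among the $x$--variables (the fermionic block $I_i$ has $|I_i^+|=\Omega_i$), and the number of permutations fixing the monomial is $\prod_i\Omega_i!=\Omega!$, exactly as in the paper.
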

\begin{proof}
By Proposition \ref{006}\eqref{068} and Theorem \ref{069}, there are scalars $c_I$ such that\[S_\Lambda=\sum_{I\in\sP_{n,m}}c_Im_I.\]Let $u\in M(I)$ for some $I\in\sP_{n,m}$. Since $u$ occurs in $[\theta x]^T$ for some $T\in sSSYT(\Lambda)$, then\[u=\theta_{_{c(T^a_1)}}\cdots\theta_{_{c(T^a_m)}}x_{_{c(T^s_{\sigma(1)})}}\cdots x_{_{c(T^s_{\sigma(n)})}},\qquad\sigma\in\Sym_n.\]Observe that $c_I$ is the coefficient of $u$ in $S_\Lambda$, that is, $c_I=d_I\Lambda(I)!$, where $d_I$ is the number of super semistandard Young tableaux $T$ such that, for each $i$, the $i$th part of $\Lambda(I)$ is the number of $i$'s in the boxes of $T$, and $\Lambda(I)!$ is the number of permutations fixing the set $\{j\mid c(T_j^s)=i\}$. Thus,\[S_\Lambda=\sum_I\Lambda(I)!K_{\Lambda,\Lambda(I)}(-1)^{\varepsilon(I)}m_I=\sum_\Omega K_{\Lambda,\Omega}\Omega!\sum_{\Lambda(I)=\Omega}(-1)^{\varepsilon(I)}m_I.\]
\end{proof}

For instance, $\figten$ is the unique super semistandard Young tableau of shape $(2,1;\,)$ and type $(\bar{2},\bar{1})$, thus\[S_{(2,1;\,)}=2\left(m_{\{0,1,2\},\{0,3\}}+m_{\{0,1,3\},\{0,2\}}-m_{\{0,1\},\{0,2,3\}}\right)=2\left(m_{\{0,1,2\},\{0,3\}}+m_{\{0,1,3\},\{0,2\}}+m_{\{0,2,3\},\{0,1\}}\right)\!.\]Observe that Schur functions cannot be positively expanded in terms of monomial functions indexed by set superpartitions. However, as in the example above, such an expansion is possible when using monomial functions indexed by partial set supercompositions.

A remarkable generalization of classic Schur functions are the so--called \emph{Macdonald polynomials} \cite[Chapter VI]{Mac99}, which are symmetric functions with coefficients given by rational functions in parameters $q$ and $t$. As shown in \cite[Chapter VI]{Mac99}, Schur functions can be obtained as a special case of Macdonald polynomials when setting $q=t$. In superspace, Macdonald polynomials involving parameters $q$ and $t$ were defined in \cite{BlDeLaMa12}, and so \emph{Schur--type functions in commuting variables in superspace} can be gotten as special limits on these parameters. More precisely, we are interested in two Schur--type functions $s_\Lambda$ and $\bar{s}_\Lambda$ obtained by setting $q=t=0$ and $q=t=\infty$, respectively. Combinatorial formulas for $s_\Lambda$ and $\bar{s}_\Lambda$ were given in \cite{JoLa17}. Indeed, for a superpartition $\Lambda$, we have:\begin{equation}\label{077}s_\Lambda=\sum_\Omega K_{\Lambda,\Omega}m_\Omega,\qquad \bar{s}_\Lambda=\sum_\Omega\bar{K}_{\Lambda,\Omega}m_\Omega,\end{equation}where $K_{\Lambda,\Omega}$ is defined in \eqref{070}, and $\bar{K}_{\Lambda,\Omega}$ is an integer number related to another kind of super semistandard Young tableaux which will be defined below.

Let $\Lambda$ be a superpartition, and let $\alpha_1,\ldots,\alpha_r\in\N_0\cup\bar{\N}_0$. A \emph{super semistandard Young tableau of the second kind} of \emph{shape} $\Lambda$ and \emph{weight} $(\alpha_1,\ldots,\alpha_r)$, is a sequence of superpartitions $(\Lambda_{(0)},\ldots,\Lambda_{(r)})$ with $\Lambda_{(0)}=\emptyset$ and $\Lambda_{(r)}=\Lambda$, satisfying:
\begin{enumerate}
\item If $\alpha_i\in\N_0$ and $\Lambda_{(i-1)}$ contains the $j$th circle in the $r$th row, then\label{072}
\begin{enumerate}
\item $\df(\Lambda_{(i)})=\df(\Lambda_{(i-1)})$.
\item $\Lambda_{(i)}^+/\Lambda_{(i-1)}^+$ is a horizontal $\alpha_i$--strip.
\item $\Lambda_{(i)}$ contains the $j$th circle in the $r$th row whenever the $r$th row of $\Lambda_{(i)}^+/\Lambda_{(i-1)}^+$ is empty.
\item $\Lambda_{(i)}$ contains the $j$th circle either in the $r$th row or in the $(r+1)th$ row whenever the $r$th row of $\Lambda_{(i)}^+/\Lambda_{(i-1)}^+$ is nonempty.
\end{enumerate}
\item If $\alpha_i\in\bar{\N}_0$, then 
\begin{enumerate}
\item $\df(\Lambda_{(i)})=\df(\Lambda_{(i-1)})+1$.
\item $\Lambda_{(i)}^\oplus/\Lambda_{(i-1)}^\oplus$ is a horizontal $(|\alpha_i|+1)$--strip.
\item The rightmost box of $\Lambda_{(i)}^\oplus/\Lambda_{(i-1)}^\oplus$, say the new circle, also belongs to $\Lambda_{(i)}^\oplus/\Lambda_{(i)}^+$.
\item The diagram obtained from $\Lambda_{(i)}$ by removing the new circle above satisfies conditions in \eqref{072}.
\end{enumerate}
\end{enumerate}
See Figure \ref{071} for an instance.\begin{figure}[H]
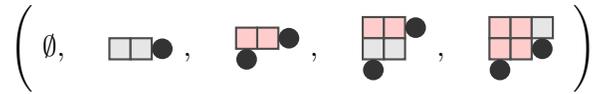
\figele\caption{A super semistandard Young tableau of the second kind of shape $(2,0;3)$ and type $(\bar{2},\bar{0},2,1)$.}\label{071}\end{figure}

As with super semistandard Young tableaux, every super semistandard Young tableau of the second kind can be represented, in the same manner, by filling with numbers the boxes and circles of its Young diagram.

\begin{dfn}
The \emph{Schur function of the second kind} of a superpartition $\Lambda\in\sPar_{n,m}$ is the function in superspace, defined as follows:
\[\bar{S}_\Lambda=\sum_{T\in\bar{s}SSYT(\Lambda)}\theta_{_{c(T^a_1)}}\cdots\theta_{_{c(T^a_m)}}\sum_{\sigma\in\Sym_n}x_{_{c(T^s_{\sigma(1)})}}\cdots x_{_{c(T^s_{\sigma(n)})}},\]where $\bar{s}SSYT(\Lambda)$ is the collection of super semistandard Young tableux of the second kind of shape $\Lambda$.
\end{dfn}

The following proposition is analogous to Theorem \ref{069} and Proposition \ref{059}.
\begin{pro}
For every $\Lambda\in\sPar_{n,m}$, the Schur function of the second kind $\bar{S}_\Lambda$ is symmetric, and\[\bar{S}_{\Lambda}=\sum_\Omega\bar{K}_{\Lambda,\Omega}\Omega!\sum_{\Lambda(I)=\Omega}(-1)^{\varepsilon(I)}m_I.\]
\end{pro}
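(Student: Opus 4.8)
The plan is to reproduce, \emph{mutatis mutandis}, the two steps carried out for the Schur function $S_\Lambda$: first prove that $\bar S_\Lambda$ is symmetric by producing, for each adjacent transposition $s_i=(i\,\,i+1)$, a weight--swapping bijection on $\bar{s}SSYT(\Lambda)$ that intertwines the action of $s_i$ on the monomials $\theta^T\sum_{\sigma}\sigma^{-1}\circ x^T$; then read off the expansion of $\bar S_\Lambda$ in the monomial basis by a coefficient count indexed by set superpartitions.

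The only genuinely new ingredient is the second--kind analogue of Lemma \ref{057}: for every superpartition $\Lambda$ one needs a bijection between the subcollection of $\bar{s}SSYT(\Lambda)$ of weight $(\alpha_1,\ldots,\alpha_r)$ and the subcollection of weight $(\alpha_1,\ldots,\alpha_{i-1},\alpha_{i+1},\alpha_i,\alpha_{i+2},\ldots,\alpha_r)$. As in the proof of Lemma \ref{057}, I would apply the classical Bender--Knuth involution $g$ of \cite[Theorem 7.10.2]{St99} to the sequence of $+$--diagrams $(\Lambda_{(0)}^+,\ldots,\Lambda_{(r)}^+)$ and then relocate the circles so that the resulting intermediate diagram $\Omega$ meets the defining conditions of a super semistandard Young tableau of the second kind, keeping matched, as in Lemma \ref{057}, the numbers of new circles created at the two affected steps. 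The point to verify carefully — and I expect this to be the main obstacle — is that this relocation is well defined and involutive despite the extra freedom built into the second--kind rules: a circle sitting in row $r$ may either stay in row $r$ or descend to row $r+1$ when the $r$th row of the added strip is nonempty, and a $\bar{\N}_0$--letter enlarges $\Lambda^\oplus$ by a horizontal strip whose rightmost box, lying outside $\Lambda^+$, becomes the new circle; one must check that swapping two adjacent weights never pushes a circle out of the shape $\Lambda$ and that the choice recorded at each step is transported bijectively. Lemma \ref{058} concerns only $x$--monomials, so it applies verbatim.

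Granting this bijection, the symmetry of $\bar S_\Lambda$ follows exactly as in Theorem \ref{069}: for $T\in\bar{s}SSYT(\Lambda)$ the bijection yields $T'\in\bar{s}SSYT(\Lambda)$ with $s_i(\theta^T)=\theta^{T'}$ and $s_i(x^T)=\tau^{-1}\circ x^{T'}$ for some $\tau\in\Sym_n$, whence, by Lemma \ref{058}, $s_i\big([\theta x]^T\big)=s_i(\theta^T)\sum_{\sigma\in\Sym_n}\sigma^{-1}\circ s_i(x^T)=\theta^{T'}\sum_{\sigma\in\Sym_n}\sigma^{-1}\circ(\tau^{-1}\circ x^{T'})=[\theta x]^{T'}$; summing over $\bar{s}SSYT(\Lambda)$ gives $s_i(\bar S_\Lambda)=\bar S_\Lambda$ for every $i$, so $\bar S_\Lambda\in\sNCSym$.

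For the monomial expansion I would argue as in Proposition \ref{059}. By Proposition \ref{006}\eqref{068} the family $\{m_I\mid I\in\sP_{n,m}\}$ is a basis of $\sNCSym_{n,m}$, so $\bar S_\Lambda=\sum_I c_I m_I$. Fixing $I\in\sP_{n,m}$ and extracting the coefficient of a monomial $u\in M(I)$, the same bookkeeping as in the proof of Proposition \ref{059} — the $\Lambda(I)!$ rearrangements of equal $x$--entries of a second--kind tableau whose content records the type $\Lambda(I)$, the sign $(-1)^{\varepsilon(I)}$ obtained by sorting $(|I_1|-1,\ldots,|I_m|-1)$ into $\Lambda(I)^a$, and the inversion signs already absorbed into $\bar K_{\Lambda,\Lambda(I)}$ (the second--kind analogue of \eqref{070}) — now with second--kind tableaux in place of first--kind ones, yields $c_I=(-1)^{\varepsilon(I)}\bar K_{\Lambda,\Lambda(I)}\,\Lambda(I)!$. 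Grouping the set superpartitions $I$ according to their type $\Omega=\Lambda(I)$ then gives $\bar S_\Lambda=\sum_\Omega\bar K_{\Lambda,\Omega}\,\Omega!\sum_{\Lambda(I)=\Omega}(-1)^{\varepsilon(I)}m_I$, which is the asserted formula. The only step requiring real work is the Bender--Knuth-type bijection of the first paragraph; everything after it is a transcription of the arguments for $S_\Lambda$.
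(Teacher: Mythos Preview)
Your proposal is correct and matches the paper's own approach exactly: the paper provides no proof at all, stating only that ``the following proposition is analogous to Theorem \ref{069} and Proposition \ref{059}.'' You have correctly unpacked what that analogy entails and, in particular, identified the one substantive point the paper leaves implicit --- namely, that a second--kind analogue of Lemma \ref{057} (the Bender--Knuth--type weight--swap on $\bar{s}SSYT(\Lambda)$) must be checked carefully because of the extra freedom in the circle--placement rules for second--kind tableaux.
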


The following result shows relations, via the projection and the lifting maps, between the Schur functions in noncommuting variables in superspace introduced here and the Schur functions in superspace in \eqref{077}.

\begin{pro}\label{060}
For every superpartitions $\Lambda,\Omega\in\sPar_{n,m}$, we have
\[\begin{array}{rclrclrcl}
\rho(S_{\Lambda})&=&n!s_{\Lambda},\qquad&\tilde{\rho}(n!s_{\Lambda})&=&S_{\Lambda},\qquad&\langle S_\Lambda,S_\Omega\rangle&=&n!^2\ipro{s_\Lambda,s_\Omega},\\[0.3cm]
\rho(\bar{S}_{\Lambda})&=&n!\bar{s}_{\Lambda},\qquad&\tilde{\rho}(n!\bar{s}_{\Lambda})&=&\bar{S}_{\Lambda},\qquad&\langle \bar{S}_\Lambda,\bar{S}_\Omega\rangle&=&n!^2\ipro{s_\Lambda,s_\Omega}.
\end{array}\]
\end{pro}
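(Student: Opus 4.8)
The plan is to reduce everything to the monomial expansions already available: Proposition \ref{059} for $S_\Lambda$, its stated analogue for $\bar S_\Lambda$, and the formulas \eqref{077} for $s_\Lambda$ and $\bar s_\Lambda$; together with the two structural facts that $\rho\circ\tilde\rho$ is the identity (Proposition \ref{062}) and that $\tilde\rho$ is an isometric embedding of $\sSym$ into $\sNCSym$ (Proposition \ref{063}).

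First I would prove $\rho(S_\Lambda)=n!\,s_\Lambda$. Apply $\rho$ to the expansion of Proposition \ref{059}. Each $I$ occurring there has $\Lambda(I)=\Omega$ for a superpartition $\Omega$, hence has fermionic blocks of pairwise distinct sizes, so Proposition \ref{039}(1) gives $\rho(m_I)=(-1)^{\varepsilon(I)}(\Omega^s)^!\,m_\Omega$; the two occurrences of $(-1)^{\varepsilon(I)}$ then cancel and the inner sum collapses to a count,
\[
\rho(S_\Lambda)=\sum_\Omega K_{\Lambda,\Omega}\,\Omega!\,(\Omega^s)^!\,N_\Omega\,m_\Omega,
\]
where $N_\Omega$ is the number of $I\in\sP_{n,m}$ with $\Lambda(I)=\Omega$. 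By Lemma \ref{038}, $N_\Omega=\binom{n}{\Omega}=n!/(\Omega!\,(\Omega^s)^!)$, so $\Omega!\,(\Omega^s)^!$ cancels and \eqref{077} gives $\rho(S_\Lambda)=n!\sum_\Omega K_{\Lambda,\Omega}m_\Omega=n!\,s_\Lambda$. Then $\tilde\rho(n!\,s_\Lambda)=S_\Lambda$ is immediate: feed $n!\,s_\Lambda=n!\sum_\Omega K_{\Lambda,\Omega}m_\Omega$ through the linear map $\tilde\rho$ and substitute \eqref{061}; the $n!$'s cancel and the result is exactly the right-hand side of Proposition \ref{059}. Finally, since $S_\Lambda=\tilde\rho(n!\,s_\Lambda)$ and $S_\Omega=\tilde\rho(n!\,s_\Omega)$, the isometry of Proposition \ref{063} and bilinearity give $\langle S_\Lambda,S_\Omega\rangle=\langle n!\,s_\Lambda,n!\,s_\Omega\rangle=n!^2\langle s_\Lambda,s_\Omega\rangle$. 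The second line of the statement follows by the identical argument, with Proposition \ref{059} replaced by its analogue for $\bar S_\Lambda$, $K_{\Lambda,\Omega}$ by $\bar K_{\Lambda,\Omega}$, and $s_\Lambda$ by $\bar s_\Lambda$; the cancellation of $\Omega!\,(\Omega^s)^!$ against $\binom{n}{\Omega}$ is insensitive to which family of tableaux is counted, so one obtains $\rho(\bar S_\Lambda)=n!\,\bar s_\Lambda$, $\tilde\rho(n!\,\bar s_\Lambda)=\bar S_\Lambda$, and $\langle\bar S_\Lambda,\bar S_\Omega\rangle=n!^2\langle\bar s_\Lambda,\bar s_\Omega\rangle$.

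I do not expect a genuine obstacle: once Lemma \ref{038} is in hand the whole proof is bookkeeping. The points needing a little care are (i) checking that it is the nonvanishing clause of Proposition \ref{039}(1), not the vanishing one, that applies---which holds because every $I$ with $\Lambda(I)=\Omega$ automatically has fermionic blocks of distinct sizes; (ii) using that $\{m_\Omega:\Omega\in\sPar_{n,m}\}$ is a basis of $\sSym$ (from \cite{DeLaMa06}), so that comparing coefficients in the monomial expansions is legitimate; and (iii) tracking the scalar $n!$ through $\tilde\rho$ and $\rho$, whose normalizations differ. I would also read the final displayed identity of the statement as $\langle\bar S_\Lambda,\bar S_\Omega\rangle=n!^2\langle\bar s_\Lambda,\bar s_\Omega\rangle$, which is what the argument produces.
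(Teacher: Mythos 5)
Your proposal is correct and follows essentially the same route as the paper: the paper likewise deduces $S_\Lambda=\tilde{\rho}(n!\,s_\Lambda)$ by substituting \eqref{061} into Proposition \ref{059}, obtains $\rho(S_\Lambda)=n!\,s_\Lambda$ from $\rho\circ\tilde{\rho}$ being the identity (Proposition \ref{062}), and gets the inner-product identity from the isometry (Proposition \ref{063}), exactly the structure you use. Your only deviation is computing $\rho(S_\Lambda)$ directly via Proposition \ref{039}(1) and the count of Lemma \ref{038}, which simply inlines the calculation already packaged in Proposition \ref{062}; your reading of the last displayed identity with $\bar{s}_\Lambda$ on the right-hand side is also the one the argument (and evidently the authors) intend.
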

\begin{proof}
By Proposition \ref{059} and the definition of $\tilde{\rho}$ in \eqref{061}, we have\[S_\Lambda=\sum_{\Omega\preceq\Lambda}K_{\Lambda,\Omega}\Omega!\sum_{\Lambda(I)=\Omega}(-1)^{\sigma(I)}m_I=\sum_{\Omega\preceq\Lambda}K_{\Lambda,\Omega}\Omega!\tilde{\rho}(m_\Omega)\frac{n!}{\Omega!}=n!\sum_{\Omega\preceq\Lambda}K_{\Lambda,\Omega}\,\tilde{\rho}(m_\Omega).\]Now, by applying Proposition \ref{062}, we obtain\[\rho(S_\Lambda)=n!\sum_{\Omega\preceq\Lambda}K_{\Lambda,\Omega}m_\Omega=n!s_\Lambda,\qquad\tilde{\rho}(n!s_\Lambda)=\sum_{\Omega\preceq\Lambda}n!K_{\Lambda,\Omega}\,\tilde{\rho}(m_\Omega)=\sum_{\Omega\preceq\Lambda}K_{\Lambda,\Omega}\Omega! \sum_{\Lambda(I)=\Omega}(-1)^{\varepsilon(I)}m_I=S_\Lambda.\]On the other hand, by applying Proposition \ref{063}, we have\[\ipro{S_\Lambda,S_\Omega}=\ipro{\tilde{\rho}(n!s_\Lambda),\tilde{\rho}(n!s_\Omega)}=n!^2\ipro{s_\Lambda,s_\Omega}.\]The rest of the equalities are proved in the same manner.
\end{proof}

Several classical properties of Schur functions in superspace can be derived from the corresponding properties on the Macdonald polynomials in superspace \cite[Appendix A]{BlDeLaMa12}. Particularly, it was shown that $\{\bar{s}_\Lambda\}_\Lambda$ and $\{s_\Omega\}_\Omega$ are essentially dual to each other, indeed,\[\langle\hat{\omega} (\bar{s}_{\Lambda'}),s_\Omega\rangle=(-1)^{\binom{m}{2}}\delta_{\Lambda\Omega},\]
where the inner product is the one on $\sSym$ defined in Subsection \ref{078}, and $\hat{\omega}:\sSym\to\sSym$ is the involution defined by $\hat{\omega}(p_\Lambda)=(-1)^{\deg(\Lambda)-\ell(\Lambda^s)}p_{\Lambda}$ \cite[Theorem 27]{DeLaMa06}.

Observe that $\{s_\Lambda\}_\Lambda$ is not an orthogonal basis, indeed, we get $\langle s_{(1;0)},s_{(0;1)}\rangle=-\langle h_{(1;\,\,)},m_{(1;\,\,)}\rangle=-1$ because $s_{(0;1)}=-h_{(1;\,\,)}+2h_{(0;1)}$ and $s_{(1;\,\,)}=m_{(1;\,\,)}$. Proposition \ref{060} establishes a relation between orthogonality of Schur functions in superspace $\{s_\Lambda\}_\Lambda$ and orthogonality of Schur functions in noncommuting variables in superespace $\{S_\Lambda\}_\Lambda$. Hence, we conclude that the functions $S_\Lambda$ are not orthogonal. For instance, due to Proposition \ref{059} and Proposition \ref{015}, we have\[S_{(0;1)}=m_{(\{0\},\{1\})}=2h_{(\{0\},\{1\})}-h_{(\{0,1\})};\qquad S_{(1;0)}=m_{(\{0,1\})}.\]Therefore, $\langle S_{(0;1)},S_{(1;0)}\rangle=-1$. In this sense, Schur function $S_\Lambda$ cannot be characterized by a triangular expansion on the monomials and an orthogonality property.

In the following, as in the case of commuting variables in superspace, the sets $\{S_\Lambda\}_\Lambda$ and $\{\bar{S}_\Omega\}_\Omega$ are essentially dual each other. Observe that if $I$ is a set superpartition with $\Lambda(I)=\Lambda$, then $(-1)^I=(-1)^{\deg(\Lambda)-\ell(\Lambda^s)}$. As $\omega(p_I)=(-1)^Ip_I$, by applying \eqref{076} we get that $\omega\circ\tilde{\rho}=\tilde{\rho}\circ\hat{\omega}$.

\begin{pro}\label{075}
For every superpartitions $\Lambda,\Omega\in\sPar_{n,m}$, we have:\[\ipro{\omega(\bar{S}_{\Lambda'}),S_\Omega}=\ipro{\omega(S_{\Lambda'}),\bar{S}_\Omega}=n!^2(-1)^{\binom{m}{2}}\delta_{\Lambda,\Omega}.\]
\end{pro}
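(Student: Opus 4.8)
The plan is to transport the identity to the algebra $\sSym$ of symmetric functions in commuting variables in superspace via the lifting map $\tilde{\rho}$, and there invoke the duality between $\{s_\Lambda\}_\Lambda$ and $\{\bar{s}_\Lambda\}_\Lambda$ recalled above from \cite{BlDeLaMa12}. The three tools are: Proposition \ref{060}, which gives $S_\Omega=\tilde{\rho}(n!\,s_\Omega)$ and $\bar{S}_{\Lambda'}=\tilde{\rho}(n!\,\bar{s}_{\Lambda'})$ (note $\Lambda'\in\sPar_{n,m}$, since conjugation preserves both the degree and the fermionic degree, so all cited results apply); Proposition \ref{063}, by which $\tilde{\rho}$ is an isometry, i.e.\ $\ipro{\tilde{\rho}(f),\tilde{\rho}(g)}=\ipro{f,g}$ for $f,g\in\sSym$; and the intertwining relation $\omega\circ\tilde{\rho}=\tilde{\rho}\circ\hat{\omega}$ noted just before the statement.

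First I would treat the left-hand pairing. Using $\bar{S}_{\Lambda'}=\tilde{\rho}(n!\,\bar{s}_{\Lambda'})$, the intertwining relation and linearity, $\omega(\bar{S}_{\Lambda'})=\omega(\tilde{\rho}(n!\,\bar{s}_{\Lambda'}))=\tilde{\rho}(\hat{\omega}(n!\,\bar{s}_{\Lambda'}))=n!\,\tilde{\rho}(\hat{\omega}(\bar{s}_{\Lambda'}))$. Hence, by bilinearity and the isometry property,
\[\ipro{\omega(\bar{S}_{\Lambda'}),S_\Omega}=\ipro{n!\,\tilde{\rho}(\hat{\omega}(\bar{s}_{\Lambda'})),\,\tilde{\rho}(n!\,s_\Omega)}=n!^2\,\ipro{\hat{\omega}(\bar{s}_{\Lambda'}),\,s_\Omega}=n!^2(-1)^{\binom{m}{2}}\delta_{\Lambda,\Omega},\]
the last step being exactly the duality $\ipro{\hat{\omega}(\bar{s}_{\Lambda'}),s_\Omega}=(-1)^{\binom{m}{2}}\delta_{\Lambda,\Omega}$ in $\sSym$.

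For the second pairing I would argue in the same way that $\ipro{\omega(S_{\Lambda'}),\bar{S}_\Omega}=n!^2\,\ipro{\hat{\omega}(s_{\Lambda'}),\bar{s}_\Omega}$, so it remains to show $\ipro{\hat{\omega}(s_{\Lambda'}),\bar{s}_\Omega}=(-1)^{\binom{m}{2}}\delta_{\Lambda,\Omega}$. Here I would use that $\hat{\omega}$ is self-adjoint for the inner product on $\sSym$: it acts diagonally on the power-sum basis, $\hat{\omega}(p_\Lambda)=(-1)^{\deg(\Lambda)-\ell(\Lambda^s)}p_\Lambda$, and the $p_\Lambda$ are pairwise orthogonal in $\sSym$ (this follows from Proposition \ref{036}, the formula $\tilde{\rho}(p_\Lambda)=\binom{n}{\Lambda}^{-1}P_\Lambda$ of \eqref{076} and the isometry, or directly from \cite{DeLaMa06}). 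Then, by self-adjointness, symmetry of the inner product, and the fact that conjugation is an involution on $\sPar_{n,m}$,
\[\ipro{\hat{\omega}(s_{\Lambda'}),\bar{s}_\Omega}=\ipro{s_{\Lambda'},\hat{\omega}(\bar{s}_\Omega)}=\ipro{\hat{\omega}(\bar{s}_\Omega),s_{\Lambda'}}=(-1)^{\binom{m}{2}}\delta_{\Omega',\Lambda'}=(-1)^{\binom{m}{2}}\delta_{\Lambda,\Omega},\]
where the penultimate equality applies the quoted duality with $\Lambda$ replaced by $\Omega'$ and $\Omega$ by $\Lambda'$.

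The bookkeeping with $\tilde{\rho}$ and $\omega$ is routine; the one point that needs care is the self-adjointness of $\hat{\omega}$, equivalently the orthogonality of the power-sum basis of $\sSym$. If one wishes to avoid citing \cite{DeLaMa06} for this, I would derive it from Proposition \ref{036}: expressing $p_\Lambda\in\sSym$ as a scalar multiple of $\rho(P_\Lambda)$ and using that $\ipro{p_I,p_J}$ in $\sNCSym$ vanishes unless $I=J$ forces $\ipro{p_\Lambda,p_\Omega}$ to vanish unless $\Lambda=\Omega$, which is all that is needed.
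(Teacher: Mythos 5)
Your proposal is correct and takes essentially the same route as the paper: lift everything to $\sSym$ via $\tilde{\rho}$ using Proposition \ref{060}, apply the intertwining relation $\omega\circ\tilde{\rho}=\tilde{\rho}\circ\hat{\omega}$ and the isometry of Proposition \ref{063}, and conclude with the duality $\ipro{\hat{\omega}(\bar{s}_{\Lambda'}),s_\Omega}=(-1)^{\binom{m}{2}}\delta_{\Lambda,\Omega}$ from \cite{BlDeLaMa12}. The only difference is that where the paper dismisses the second pairing with ``shown similarly,'' you explicitly derive $\ipro{\hat{\omega}(s_{\Lambda'}),\bar{s}_\Omega}=(-1)^{\binom{m}{2}}\delta_{\Lambda,\Omega}$ from the quoted duality via self-adjointness of $\hat{\omega}$ (diagonal on the orthogonal power-sum basis) and the involutivity of conjugation --- a harmless and indeed useful elaboration of the same argument.
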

\begin{proof}
By applying Proposition \ref{060}, Proposition \ref{063} and \cite[Corollary 29]{BlDeLaMa12}, we get
\[\ipro{\omega(\bar{S}_{\Lambda'}),S_\Omega}=\ipro{\omega(\tilde{\rho}(n!\bar{s}_{\Lambda'})),\tilde{\rho}(n!s_\Omega)}
=\ipro{\tilde{\rho}(\hat{\omega}(n!\bar{s}_{\Lambda'})),\tilde{\rho}(n!s_\Omega)}
=n!^2\ipro{\hat{\omega}(s_{\Lambda'}),s_\Omega}=n!^2(-1)^{\binom{m}{2}}\delta_{\Lambda,\Omega}.\]The second equality is shown similarly.
\end{proof}

\begin{rem}
Some particular elementary and homogeneous symmetric functions can be recovered from Schur functions by taking $\Lambda=(0;1^n)$ and $\Lambda=(n;\,)$, respectively. Specifically,\[S_{(0;1^n)}=e_{(\{0,1,\ldots,n\})};\qquad\omega(\bar{S}_{(n;\,)})=n!h_{(\{0,1,\ldots,n\})}.\]Indeed, we have $K_{(0;1^n),\Omega}=1$ whenever $\Omega=(0;1^n)$ and $K_{(0;1^n),\Omega}=0$ otherwise. Thus, by Proposition \ref{059}, we get\[S_{(0;1^n)}=\sum_{\Lambda(I)=(0;1^n)}m_{I}=m_{(\{0\},\{1\},\ldots,\{n\})}=e_{(\{0,1,\ldots,n\})},\]where the last equality follows from definition of $e_\Lambda$. On the other hand, by using Proposition \ref{075} and the fact that $\langle e_{(\{0,1,\ldots,n\})},h_J\rangle=\langle m_{(\{0\},\{1\},\ldots,\{n\})},h_J\rangle=(-1)^{\binom{m}{2}}n!\delta_{I,J}$, we obtain $\omega(\bar{S}_{(n;\,)})=n!h_{(\{0,1,\ldots,n\})}$.
\end{rem}

\begin{rem}\label{079}
Observe that $\{S_\Lambda\}_\Lambda$ and $\{\bar{S}_\Lambda\}_\Lambda$ are not bases for $\sNCSym$. Indeed, bases of $\sNCSym$ are indexed by set superpartitions, however there is no bijection between superpartitions and set superpartitions. In the non-superspace case, a first definition of Schur functions was given in \cite{RoSa06} by using classic semistandard Young tableaux, nevertheless these ones do not constitute a basis of $\NCSym$ either. A basis of Schur--type functions for $\NCSym$ was given in \cite{AlXiWi22} by using a noncommuting version of the Jacobi--Trudi determinant. They also proved that Schur functions in \cite{RoSa06} are sums of them. In superspace, even in the commuting case, there is no a version of the Jacobi--Trudi determinant. Thus, the Jacobi--Trudi identity for Schur functions is still an open problem, as does the case for our Schur--type functions.
\end{rem}

\begin{rem}
Recall that the product of classical Schur functions can be expressed as a linear combination of Schur functions by using the so--called Littlewood-Richardson coefficients. See for example \cite[Equation (7.64)]{St99}. In superspace, as $\{s_\Lambda\}_\Lambda$ is a basis of $\sSym$, every product $s_\Lambda s_\Omega$ can also be written as a linear combination of them, however the existence of a general combinatorial rule to get the coefficients involved here is still an open problem \cite[Section 9]{JoLa17}. As with classical Schur functions, in some particular cases, the product can be described by means of Pieri rules in superspace \cite[Section 4]{JoLa17}. In our case, as mentioned in Remark \ref{079}, the collection $\{S_\Lambda\}_\Lambda$ is not a basis for $\sNCSym$, and therefore the product $S_\Lambda S_\Omega$ is not necessarily a linear combination of these functions. For instance, by applying Proposition \ref{059}, we have\[S_{(0;1)}S_{(0;1)}=m_{(\{0\},\{1\})}m_{(\{0\},\{1\})}=-m_{(\{0,1\},\{0,2\})}.\]As mentioned in Subsection \ref{080}, the type of the set superpartition $(\{0,1\},\{0,2\})$ is not well defined, therefore this product is not a linear combination of our Schur functions. However, it could be an interesting problem to get Pieri--like rules for more specific cases, such as when the first superpartition is simply a partition.
\end{rem}

\begin{rem}
Given a superpartition $\Lambda$, we define\[{\sf m}_\Lambda=\Lambda!\sum_{\Lambda(I)=\Lambda}(-1)^{\varepsilon(I)}m_I.\]Observe that each $S_\Lambda$ is a linear combination of these functions (Proposition \ref{059}). It is not difficult to show that $\{{\sf m}_\Lambda\}_\Lambda$ is a linearly independent set. Thus, by applying the Gram--Schmidt process, with respect to the inner product in Subsection \ref{078}, we can get an orthonormal basis $\{{\sf S}_\Lambda\}_\Lambda$ for the vector subspace spanned by $\{{\sf m}_\Lambda\}_\Lambda$, which expand triangularly on them. Observe that, due to the orthogonality, this set does not coincide with either of the Schur functions introduced here. On the other hand, as explained above, by using Macdonald polynomials in superspace with parameters $q=t=1$, it was defined another family of Schur--type functions ${\sf s}_\Lambda$, which constitutes an orthogonal basis of $\sSym$ \cite[Equation (174)]{BlDeLaMa12}. As there is no a known method to obtain these functions by means of some tableaux--type objects, there is no a known combinatorial formula, as the ones given in \eqref{077}, to write the functions ${\sf s}_\Lambda$ in terms of the functions ${\sf m}_\Lambda$. Nevertheless, as the lifting map is an isometry (Proposition \ref{063}), we conjecture that $\tilde{\rho}({\sf s}_\Lambda)$ coincides with ${\sf S}_\Lambda$ up to some rational scalar. 
\end{rem}

\subsubsection*{Acknowledgements}

The first author acknowledges the financial support of DIDULS/ULS, through the project PR232146. The second author was partially supported by the grant ANID-FONDECYT Iniciación, No. 11241418.


\bibliographystyle{plainurl}
\bibliography{../../../../Documents/Projects/LaTeX/bibtex.bib}

\begin{thebibliography}{10}

\bibitem{AgEtAl12}
M.~Aguiar, C.~Andr\'e, C.~Benedetti, N.~Bergeron, Z.~Chen, P.~Diaconis,
  A.~Hendrickson, S.~Hsiao, M.~Isaacs, A.~Jedwab, K.~Johnson, G.~Karaali,
  A.~Lauve, T.~Le, S.~Lewis, H.~Li, K.~Magaard, E.~Marberg, J.-C. Novelli,
  A.~Pang, F.~Saliola, L.~Tevlin, J.-Y. Thibon, N.~Thiem, V.~Venkateswaran,
  R.~Vinroot, N.~Yan, and M.~Zabrocki.
\newblock Supercharacters, symmetric functions in noncommuting variables, and
  related {H}opf algebras.
\newblock {\em Adv Math}, 229(4):2310--2337, 3 2012.

\bibitem{AlXiWi22}
F.~Aliniaeifard, S.~Xiao, and S.~van Willigenburg.
\newblock Schur functions in noncommuting variables.
\newblock {\em Adv Math}, 406:108536, 9 2022.

\bibitem{ArGoMa24}
D.~Arcis, C.~Gonz\'alez, and S.~M\'arquez.
\newblock On the {H}opf algebra of noncommutative symmetric functions in
  superspace.
\newblock {\em Electron J Comb}, 24(3):P3.28, 9 2024.

\bibitem{BeHoRoZa06}
N.~Bergeron, C.~Hohlweg, M.~Rosas, and M.~Zabrocki.
\newblock Grothendieck bialgebras, partition lattices, and symmetric functions
  in noncommutative variables.
\newblock {\em Electron J Comb}, 13:R75, 8 2006.

\bibitem{BeReRoZa08}
N.~Bergeron, C.~Reutenauer, M.~Rosas, and M.~Zabrocki.
\newblock Invariants and coinvariants of the symmetric group in noncommuting
  variables.
\newblock {\em Canadian J Math}, 60(2):266--296, 4 2008.

\bibitem{BeZa09}
N.~Bergeron and M.~Zabrocki.
\newblock The {H}opf algebras of symmetric functions and quasi-symmetric
  functions in non-commutative variables are free and co-free.
\newblock {\em J Algebra Appl}, 8(4):581--600, 8 2009.

\bibitem{Be-B10}
A~Bergeron-Brlek.
\newblock {\em Words And Noncommutative Invariants of Finite Groups}.
\newblock PhD thesis, York University, Toronto, 6 2010.

\bibitem{BlDeLaMa12}
O.~Blondeau-Fournier, P.~Desrosiers, L.~Lapointe, and P.~Mathieu.
\newblock Macdonald polynomials in superspace as eigenfunctions of commuting
  operators.
\newblock {\em J Comb}, 3(3):495--561, 2 2012.

\bibitem{CaSa11}
M.~Can and B.~Sagan.
\newblock Partitions, rooks, and symmetric functions in noncommuting variables.
\newblock {\em Electron J Comb}, 18(2):P3, 1 2011.
\newblock The Zeilberger Festschrift volume.

\bibitem{DeLaMa01}
P.~Desrosiers, L.~Lapointe, and P.~Mathieu.
\newblock Supersymmetric {C}alogero--{M}oser--{S}utherland models and {J}ack
  superpolynomials.
\newblock {\em Nucl Phys B}, 606(3):547--582, 7 2001.

\bibitem{DeLaMa03}
P.~Desrosiers, L.~Lapointe, and P.~Mathieu.
\newblock Jack polynomials in superspace.
\newblock {\em Commun Math Phys}, 242(1-2):331--360, 9 2003.

\bibitem{DeLaMa04}
P.~Desrosiers, L.~Lapointe, and P.~Mathieu.
\newblock Jack superpolynomials: physical and combinatorial definitions.
\newblock {\em Czech J Phys}, 54(11):1223--1228, 11 2004.

\bibitem{DeLaMaProc04}
P.~Desrosiers, L.~Lapointe, and P.~Mathieu.
\newblock Supersymmetric {C}alogero-{M}oser-{S}utherland models:
  superintegrability structure and eigenfunctions.
\newblock In P.~Tempesta, P.~Winternitz, J.~Harnad, W.~Miller, G.~Pogosyan, and
  M.~Rodr\'iguez, editors, {\em Superintegrability in Classical and Quantum
  Systems}, volume~37 of {\em CRM Proceedings \& Lecture Notes}, 11 2004.
\newblock Proceedings of the Workshop on Superintegrability in Classical and
  Quantum Systems, September 16--22, Centre de Recherches Math\'ematiques,
  Universit\'e de Montreal, Canada.

\bibitem{DeLaMa06}
P.~Desrosiers, L.~Lapointe, and P.~Mathieu.
\newblock Classical symmetric functions in superspace.
\newblock {\em J Algebr Comb}, 24:209--238, 9 2006.

\bibitem{VaDi00}
J.~Diejen and L.~Vinet.
\newblock {\em Calogero--{M}oser--{S}utherland models}.
\newblock CRM Series in Mathematical Physics. Springer, New York, 1 edition, 3
  2000.

\bibitem{DuHiTh02}
G.~Duchamp, F.~Hivert, and J.~Thibon.
\newblock Noncommutative symmetric functions {V}{I}: free quasi-symmetric
  functions and related algebras.
\newblock {\em Int J Algebr Comput}, 12(5):671--717, 10 2002.

\bibitem{FiLaPi19}
S.~Fishel, L.~Lapointe, and M.~Pinto.
\newblock Hopf algebra structure of symmetric and quasisymmetric functions in
  superspace.
\newblock {\em J Comb Theory A}, 166:144--170, 8 2019.

\bibitem{GKLLRT95}
I.~Gelfand, D.~Krob, A.~Lascoux, B.~Leclerc, V.~Retakh, and J.-Y. Thibon.
\newblock Noncommutative symmetric functions.
\newblock {\em Adv Math}, 112(2):218--348, 5 1995.

\bibitem{Ges84}
I.~Gessel.
\newblock Multipartite ${P}$-partitions and inner products of skew {S}chur
  functions.
\newblock {\em Contemp Math}, 34:289--317, 1984.

\bibitem{GrRe14}
D.~Grinberg and V.~Reiner.
\newblock Hopf algebras in combinatorics, 9 2014.
\newblock URL: \url{https://arxiv.org/abs/1409.8356}.

\bibitem{OEIS}
OEIS~Foundation Inc.
\newblock The {O}n-{L}ine {E}ncyclopedia of {I}nteger {S}equences, 2023.
\newblock Founded in 1964 by N. Sloane.
\newblock URL: \url{https://oeis.org/}.

\bibitem{JoLa17}
M.~Jones and L.~Lapointe.
\newblock Pieri rules for {S}chur functions in superspace.
\newblock {\em J Comb Theory A}, 148(3):57--115, 5 2017.

\bibitem{Mac99}
I.G. Macdonald.
\newblock {\em Symmetric Functions and Hall Polynomials}.
\newblock Oxford Mathematical Monographs. Oxford University Press, Oxford, 2
  edition, 7 1999.

\bibitem{MiJiDa00}
T.~Miwa, M.~Jimbo, and E.~Date.
\newblock {\em Solitions: Differential Equations, Symmetries and Infinite
  Dimensional Algebras}, volume 135 of {\em Cambridge Tracts in Mathematics}.
\newblock Cambridge University Press, Cambridge, 2000.

\bibitem{PoRe95}
S.~Poirier and C.~Reutenauer.
\newblock Alg\`ebres de {H}opf de tableaux.
\newblock {\em Ann Sci Math Qu\'ebec}, 95(1):79--90, 1995.

\bibitem{RhTi22}
B.~Rhoades and A.~Wilson.
\newblock Set superpartitions and superspace duality modules.
\newblock {\em Forum Math}, 10:e105:1--54, 2022.

\bibitem{RoSa06}
M.~Rosas and B.~Sagan.
\newblock Symmetric functions in noncommuting variables.
\newblock {\em T Am Math Soc}, 350:215--232, 2006.

\bibitem{Ro64}
G.~Rota.
\newblock On the foundations of combinatorial theory {I}. {T}heory of
  {M}\"{o}bius functions.
\newblock {\em Z Wahrscheinlichkeit}, 2:340--368, 1 1964.

\bibitem{Ruij99}
S.~Ruijsenaars.
\newblock Systems of {C}alogero--{M}oser type.
\newblock In G.~Semenoff and L.~Vinet, editors, {\em Particles and Fields}, CRM
  Series in Mathematical Physics, chapter~7, pages 251--352. Springer, New
  York, 1 edition, 7 1993.

\bibitem{St97}
R.~Stanley.
\newblock {\em Enumerative Combinatorics}, volume~1 of {\em Cambridge Studies
  in Advanced Mathematics 49}.
\newblock Cambridge University Press, Cambridge, 1997.

\bibitem{St99}
R.~Stanley.
\newblock {\em Enumerative Combinatorics}, volume~2 of {\em Cambridge Studies
  in Advanced Mathematics 62}.
\newblock Cambridge University Press, Cambridge, 1999.

\bibitem{Wol36}
M.~Wolf.
\newblock Symmetric functions of non-commutative elements.
\newblock {\em Duke Math J}, 4(2):626--637, 12 1936.

\end{thebibliography}

\end{document}